\def\operator@font{\sf}
\newcommand{\cA}{{\mathscr A}}
\newcommand{\cB}{{\mathscr B}}
\newcommand{\cC}{{\mathscr C}}
\newcommand{\cF}{{\mathscr F}}
\newcommand{\cI}{{\mathscr I}}
\newcommand{\cO}{{\mathscr O}}
\newcommand{\cU}{{\mathscr U}}
\newcommand{\Del}{{\mathsf{Del}}}
\newcommand{\TW}{{\mathsf{TW}}}
\newcommand{\Tot}{{\mathsf{Tot}}}
\newcommand{\ad}{{\mathsf{ad}}}
\newcommand{\eq}{{\mathsf{eq}}}
\newcommand{\HKR}{{\mathsf{HKR}}}
\newcommand{\PBW}{{\mathsf{PBW}}}
\newcommand{\gog}{{\mathfrak{g}}}
\newcommand{\MC}{\mathsf{MC}}
\newcommand{\sHom}{\underline{\mathsf{Hom}}}
\newcommand{\D}{{\bf D}_{\mathsf{coh}}^b}
\newcommand{\CE}{{\mathsf{CE}}}
\newcommand{\chk}{{\scriptscriptstyle\vee}}
\newcommand{\pt}{{\mathsf{pt}}}
\DeclareMathOperator{\Spec}{Spec}
\DeclareMathOperator{\Der}{Der}
\DeclareMathOperator{\Aut}{Aut}
\DeclareMathOperator{\End}{End}
\DeclareMathOperator{\Hom}{Hom}
\DeclareMathOperator{\Tor}{Tor}
\DeclareMathOperator{\id}{id}
\DeclareMathOperator{\Ext}{Ext}
\newcommand{\bS}{\mathbf{S}}
\newcommand{\U}{\mathbf{U}}
\DeclareMathOperator{\sym}{sym}
\newcommand{\ra}{\rightarrow}
\newcommand*{\triple}[2][.1ex]{%
  \mathrel{\vcenter{\offinterlineskip%
  \hbox{$#2$}\vskip#1\hbox{$#2$}\vskip#1\hbox{$#2$}}}}
\newcommand*{\triplearrows}{\triple{\rightarrow}}
\newtheorem{thm}{Theorem}[section]
\newtheorem{lem}[thm]{Lemma}
\newtheorem{cor}[thm]{Corollary}
\newtheorem{prop}[thm]{Proposition}
\theoremstyle{definition}
\newtheorem{defi}[thm]{Definition}
\newtheorem{rem}[thm]{Remark}
\newtheorem{exa}[thm]{Example}
\renewcommand{\phi}{\varphi}
\author{
Damien Calaque\footnote{Department of Mathematics, ETH Z\"urich, 8092 Z\"urich, Switzerland, {\em e-mail:} {\tt damien.calaque@math.ethz.ch}. 
On leave from Institut Camille Jordan, CNRS \& Universit\'e Lyon 1, France.}, 
Andrei C\u ald\u araru\footnote{Department of Mathematics, University of Wisconsin--Madison, 480 Lincoln Drive, Madison WI 53706-1388, USA, {\em e-mail:} {\tt andreic@math.wisc.edu}.}, 
Junwu Tu\footnote{Mathematics Department, University of Oregon, Eugene OR 97403, USA, {\em e-mail:} {\tt junwut@uoregon.edu}.}}
\title{On the Lie algebroid of a derived self-intersection}
\date{}
\newcommand{\wtd}{\widetilde}
\newcommand*{\doublerightarrow}[2]{\mathrel{
  \settowidth{\@tempdima}{$\scriptstyle#1$}
  \settowidth{\@tempdimb}{$\scriptstyle#2$}
  \ifdim\@tempdimb>\@tempdima \@tempdima=\@tempdimb\fi
  \mathop{\vcenter{
    \offinterlineskip\ialign{\hbox to\dimexpr\@tempdima+1em{##}\cr
    \rightarrowfill\cr\noalign{\kern.01ex}
    \rightarrowfill\cr}}}\limits^{\!#1}_{\!#2}}}
\newcommand*{\triplerightarrow}[1]{\mathrel{
  \settowidth{\@tempdima}{$\scriptstyle#1$}
  \mathop{\vcenter{
    \offinterlineskip\ialign{\hbox to\dimexpr\@tempdima+1em{##}\cr
    \rightarrowfill\cr\noalign{\kern.01ex}
    \rightarrowfill\cr\noalign{\kern.01ex}
    \rightarrowfill\cr}}}\limits^{\!#1}}}
\newcommand{\Rnum}[1]{\expandafter\@slowromancap\romannumeral #1@}
\numberwithin{equation}{section}
\begin{document}
\maketitle

\begin{abstract}
\noindent{\bf Abstract.} 
Let $i:X\hookrightarrow Y$ be a closed embedding of smooth algebraic varieties. 
Denote by $N$ the normal bundle of $X$ in $Y$. 
The present paper contains two constructions of certain Lie structure on the shifted normal bundle $N[-1]$ encoding the information of the formal neighborhood of $X$ in $Y$. 
We also present a few applications of these Lie theoretic constructions in understanding the algebraic geometry of embeddings.
\end{abstract}

\setcounter{tocdepth}{1}
\tableofcontents
\section{Introduction}\label{sec:intro}

The aim of this paper is to study the derived self-intersection of a closed $X$ into $Y$, when $X$ and $Y$ are smooth algebraic varieties. 
The word ``derived'' appears here because self-intersections are badly-behaved (e.g.~they are not transverse and their actual dimension does not coincide with the expected one). 
If $X$ and $Y$ were to be differentiable manifolds we could consider an appropriate small perturbation $X_\epsilon$ of $X$ and take $X\cap X_\epsilon$, but this approach has several drawbacks: \\[-0.7cm]
\begin{itemize}
\item it does not preserve the set-theoretical intersection; \\[-0.7cm]
\item it does lead to some category theoretic problems (no functorial choice for $X_\epsilon$); \nopagebreak \\[-0.7cm]
\item we can't do this with algebraic varieties. 
\end{itemize}
It has been known for a long time that a possible replacement for actual geometric perturbation is homological perturbation. 
More percisely, if $i:X\hookrightarrow Y$ is a closed embedding then we will consider a resolution $\mathcal R$ of $\mathcal O_X$ by a differential graded (dg) $i^{-1}\mathcal O_Y$-algebra, and equip $X$ with the sheaf of dg-rings 
$\mathcal O_X\otimes^{\mathbb{L}}_{i^{-1}\mathcal O_Y}\mathcal O_X:=\mathcal R\otimes_{i^{-1}\mathcal O_Y}\mathcal O_X$. 
The pair $(X,\mathcal O_X\otimes^{\mathbb{L}}_{i^{-1}\mathcal O_Y}\mathcal O_X)$ is a dg-scheme that we will denote $X\times_Y^hX$ and call the derived self-intersection of $X$ into $Y$ (or more generally the self homotopy fiber product of $X$ over $Y$ for a general morphism $X\to Y$). 

Observe that different choices of resolutions give rise to weakly equivalent results, and that there exists functorial choices for such resolutions. 

\medskip

In the rest of the introduction we allow ourselves to deal informally with schemes as if they would be topological 
spaces, where resolutions have to be thought as fibrant replacements\footnote{This can be made precise using model 
categories and the advanced technology of homotopical and derived algebraic geometry, after To\"en-Vezzosi \cite{TV1,TV2} 
and Lurie \cite{L}. Despite the fact that this is certainly the appropriate framework to work with, we will 
nevertheless stay within the realm of dg-schemes (after \cite{CK}), which are sufficient for our purposes. }. 

\subsection{The diagonal embedding (after M.~Kapranov)}

Let us consider the example of a diagonal embedding $\Delta:X\hookrightarrow X\times X$. One way to compute the 
homotopy fiber product is to factor $\Delta:X\to X\times X$ into an acyclic cofibration followed by a fibration: 
$X\tilde\rightarrowtail \tilde X\twoheadrightarrow X\times X$. 
This is achieved by taking $\tilde X:=\{\gamma:[0,1]\to X\times X\,|\,\gamma(0)\in\Delta(X)\}$ and 
$\gamma\mapsto\gamma(1)$ as fibration. Therefore the derived self-intersection of the diagonal is 
$$
\tilde{X}\times_{X\times X}X=\{\gamma:[0,1]\to X\times X\,|\,\gamma(0),\gamma(1)\in\Delta(X)\}\,,
$$
which happens to be weakly equivalent to the loop space of $X$ ({\it via} the map sending a path $\gamma$ in 
$X\times X$ with both ends in the diagonal to the loop $\tilde\gamma$ in $X$ defined by 
$\tilde\gamma(t)=\begin{cases}\pi_1\big(\gamma(2t)\big)\textrm{ if }0\leq t\leq 1/2 \\ 
\pi_2\big(\gamma(2-2t)\big)\textrm{ if }1/2\leq t\leq 1\end{cases}$). 

\medskip 

By analogy with what happens to topological spaces we get that $X\times^h_{X\times X}X$ is a derived group 
$X$-scheme\footnote{It is an $X$-scheme because $X\times X$ itself is an $X$-scheme through one of the projections 
$X\times X\to X$. Moreover, the choice of one or the other is ``homotopically irrelevant'' in our context. 
Notice nevertheless that it would be a bit more subtle if we were looking at the $2$-groupoid structure 
on $X\times^h_{X\times X}X$. }. 

\subsubsection*{What is the Lie algebra of $X\times^h_{X\times X}X$ ?}

The answer to this question is contained in a beautiful paper of Kapranov \cite{Ka}: informally speaking this is 
the Atiyah extension $S^2(T_X)\to T_X[1]$, viewed as a Lie bracket $\wedge^2(T_X[-1])\to T_X[-1]$ on $T_X[-1]$ 
in ${\bf D}(X)$. Let us mention that this is the induced Lie bracket, on cohomology, of the differential graded Lie algebra structure 
on $\mathfrak{g}_X:=\mathbb{T}_{X/X\times X}$. We should therefore obtain, by homotopy transfer, higher 
(Massey) brackets on the cohomology $T_X[-1]$, which were also constructed by other means in Kapranov's paper. 

\subsubsection*{What is the universal enveloping algebra of this Lie algebra ?}

The answer to this question was given by Markarian \cite{Ma}: this is the Hochschild cochain complex 
$\mathcal H\mathcal H_X:=(\pi_1)_*\mathbb{R}\mathcal Hom_{X\times X}(\Delta_*\mathcal O_X,\Delta_*\mathcal O_X)$ of $X$. 
The PBW theorem then reads
$$
S\big(T_X[-1]\big)\,\tilde\longrightarrow\,\mathcal H\mathcal H_X\quad\big(\textrm{in }{\bf D}(X)\big)\,,
$$
which is nothing but the HKR theorem \cite{HKR,Ye}. 

Observe moreover that any object $E$ in ${\bf D}(X)$ is naturally a representation over the Lie algebra 
$T_X[-1]$, via its own Atiyah class, and that morphisms in ${\bf D}(X)$ are all $T_X[-1]$-linear. 
The $T_X[-1]$-invariant space of an object $E$ is then the space of morphisms from $\mathcal O_X$, 
the trivial representation, to $E$: these are (derived) global sections. Assuming that the Duflo 
isomorphism\footnote{Which states that for a finite dimensional Lie algebra $\mathfrak{g}$, 
$S(\mathfrak{g})^{\mathfrak{g}}$ and $U(\mathfrak{g})^{\mathfrak{g}}$ are isomorphic as algebras. } 
is valid in dg or triangulated categories (which is unknown), we would then have an isomorphism of algebras 
$$
H^*\Big(X,S\big(T_X[-1]\big)\Big)\,\tilde\longrightarrow\,{\rm HH}^*_X
:=Ext^*_{X\times X}(\Delta_*\mathcal O_X,\Delta_*\mathcal O_X)\,.
$$
This result has been proved using a different approach in \cite{CVdB,DTT}. 


\subsubsection*{What is the Chevalley-Eilenberg algebra of $\mathfrak{g}_X$ ?}

The answer to this question is again in Kapranov's paper: $C^*(\mathfrak g_{X})$ is quasi-isomorphic to 
the structure ring $\mathcal O_{X_{X\times X}^\infty}$ of the formal neighborhood $X_{X\times X}^\infty$ of the 
diagonal into $X\times X$. 

\subsection{The general case of a closed embedding (this paper)}

For a general closed embedding $i:X\hookrightarrow Y$ we again resolve it by considering a path space 
$$
\tilde{X}:=\{\gamma:[0,1]\to Y\,|\,\gamma(0)\in i(X)\}
$$
and $\gamma\mapsto\gamma(1)$ as fibration. Therefore the derived self-intersection of $X$ into $Y$ is 
$$
\tilde{X}\times_{Y}X=\{\gamma:[0,1]\to Y\,|\,\gamma(0),\gamma(1)\in i(X)\}\,,
$$
which this time does not appear to be a group $X$-scheme. Nevertheless $X\times^h_YX$ happens to be 
the space of arrows of a derived groupoid $Y$-scheme having $X$ as space of objects. 

\subsubsection*{What is the Lie algebroid of $X\times^h_YX$ ?}

We answer that, informally speaking, the Lie algebroid of the groupoid scheme $X\times^h_YX$ is 
the shifted normal bundle $N[-1]$ of $X$, with anchor map being the Kodaira-Spencer class $N[-1]\to T_X$. 
We observe again that this is the induced anchor map, on cohomology, of the base change morphism 
$\mathbb{T}_{X/Y}\to \mathbb{T}_{X/\pt}$. To state this observation precisely, recall from~\cite[Section 2.6]{CK} that over the scheme $Y$ we may resolve $i_*\mathcal O_X$ by a sheaf of $\mathcal O_Y$-algebra of the form $\bS_{\cO_Y}(E[1])$ where $E=E_0\oplus E_{-1} \oplus E_{-2}\cdots$ is a non-positively graded $\cO_Y$-locally free sheaf such that each graded component $E_j$ is of finite rank. We denote the differential graded ringed space $\big(Y, \bS_{\cO_Y}(E[1])\big)$ by $\widetilde{X}$. By construction, there are morphisms of dg ringed spaces
\[ X\overset{j}{\to}\widetilde{X}, \mbox{\; and \;} \widetilde{X} \overset{\pi}{\to}Y\]
with composition $\pi\circ j=i$. We call this data a smooth resolution of the morphism $i$.
\begin{prop}
Let $X\overset{j}{\to}\widetilde{X}\overset{\pi}{\to}Y$ be a smooth resolution of the inclusion morphism $i:X\hookrightarrow Y$. 
Then the pair $(\mathcal O_{\widetilde{X}},T_{\widetilde{X}/Y})$ is a dg-Lie algebroid over $\mathcal O_Y$ and 
it is the dg-Lie algebroid of the following groupoid in dg-$Y$-schemes: $(\widetilde{X},\widetilde{X}\times_Y\widetilde{X})$. 
\end{prop}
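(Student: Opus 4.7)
The plan is to establish three essentially independent claims: (i) that $(\mathcal{O}_{\widetilde{X}},T_{\widetilde{X}/Y})$ carries a natural dg-Lie algebroid structure over $\mathcal{O}_Y$; (ii) that $(\widetilde{X},\widetilde{X}\times_Y\widetilde{X})$ is a groupoid object in dg-$Y$-schemes; and (iii) that the standard linearization procedure applied to the groupoid of (ii) reproduces the dg-Lie algebroid of (i).

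For (i), I exploit the hypothesis that $\mathcal{O}_{\widetilde{X}}=\bS_{\mathcal{O}_Y}(E[1])$ is a graded-free commutative algebra over $\mathcal{O}_Y$. The sheaf $T_{\widetilde{X}/Y}$ of $\mathcal{O}_Y$-linear graded derivations of $\mathcal{O}_{\widetilde{X}}$ is then freely generated as a graded $\mathcal{O}_{\widetilde{X}}$-module by $E^{\vee}[-1]$, inherits a differential from the graded commutator with the total differential of $\mathcal{O}_{\widetilde{X}}$, and carries both a dg-Lie bracket (graded commutator of derivations) and a tautological anchor $T_{\widetilde{X}/Y}\hookrightarrow\Der_{\mathcal{O}_Y}(\mathcal{O}_{\widetilde{X}})$; the Leibniz rule and the graded Jacobi identity are routine. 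For (ii), I equip $\widetilde{X}\times_Y\widetilde{X}$ with the pair-groupoid structure over $Y$: the source and target are the two projections, the unit is the diagonal $\Delta$, the inverse swaps the factors, and composition is the projection $(\widetilde{X}\times_Y\widetilde{X})\times_{\widetilde{X}}(\widetilde{X}\times_Y\widetilde{X})\to\widetilde{X}\times_Y\widetilde{X}$ that forgets the middle entry. The groupoid axioms reduce to formal properties of dg fiber products.

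For (iii), recall that for a groupoid $(G_0,G_1)$ with source $s$, target $t$ and unit $e$, the associated Lie algebroid is $e^{*}\ker(ds)$, with anchor $dt|_e$ and bracket induced by the commutator of right-invariant derivations on $G_1$. In our setting $\ker(d\pi_1)$ corresponds to the dg-derivations of $\mathcal{O}_{\widetilde{X}}\otimes_{\mathcal{O}_Y}\mathcal{O}_{\widetilde{X}}$ acting on the second tensor factor, so restriction along $\Delta$ gives a canonical isomorphism $e^{*}\ker(d\pi_1)\cong T_{\widetilde{X}/Y}$, and $d\pi_2|_\Delta$ reduces to the identity anchor. The right-invariant extension of $\xi\in T_{\widetilde{X}/Y}$ is the dg-derivation $1\otimes\xi$ on $\widetilde{X}\times_Y\widetilde{X}$, and the identity $[1\otimes\xi,1\otimes\eta]=1\otimes[\xi,\eta]$ then matches the groupoid bracket with the derivation commutator from (i).

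The main obstacle I anticipate is the dg-bookkeeping in (iii): one must verify that the right-invariant extension $\xi\mapsto 1\otimes\xi$ is a well-defined morphism of dg sheaves, compatible with the differentials induced on both sides by $d$ on $\mathcal{O}_{\widetilde{X}}$, and that the familiar pair-groupoid computation remains valid in a setting where $\widetilde{X}\to Y$ is neither flat nor of ordinary relative dimension. Once this is checked, the three steps combine to yield the proposition.
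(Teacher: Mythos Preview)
Your proposal is correct. In fact you have written more of an argument than the paper itself does: the paper does not isolate a proof of this proposition at all. Part~(i) is dispatched in one sentence at the start of \S2.2 (and in Example~\ref{exa:DR}) by observing that $\Der_{\mathcal O_Y}(\mathcal O_{\widetilde X})$ is tautologically a dg-Lie algebroid, exactly as you argue. The groupoid structure on $(\widetilde X,\widetilde X\times_Y\widetilde X)$ is never spelled out; it is implicit in the cogroupoid description of $\mathcal A\otimes_{\mathcal B}\mathcal A$ in \S3.

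The one genuine difference is in part~(iii). You recover the Lie algebroid from the groupoid in the classical differential-geometric way, via $e^*\ker(ds)$ and right-invariant vector fields, and then check that $[1\otimes\xi,1\otimes\eta]=1\otimes[\xi,\eta]$. The paper instead takes the algebraic route: it identifies the universal enveloping algebra and the jet algebra of $T_{\widetilde X/Y}$ with relative differential operators and with $\mathcal O_{\widetilde X}\hat\otimes_{\mathcal O_Y}\mathcal O_{\widetilde X}$ respectively (Examples~\ref{exa:diff} and~\ref{exa:jets}, which is Grothendieck's description of differential operators via the diagonal), and it is this jet-algebra identification that plays the role of ``the Lie algebroid of the pair groupoid is the tangent algebroid.'' Your approach is more self-contained and makes the groupoid/algebroid correspondence explicit; the paper's approach buys compatibility with the Hopf (co)monad structures that are used later in \S3. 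The dg-bookkeeping you flag as a potential obstacle is genuinely routine here because $\mathcal O_{\widetilde X}^\sharp=\bS_{\mathcal O_Y}(E[1])$ is free over $\mathcal O_Y$, so $1\otimes\xi$ is well defined on $\mathcal O_{\widetilde X}\otimes_{\mathcal O_Y}\mathcal O_{\widetilde X}$ and commutes with the differential $Q\otimes1+1\otimes Q$ by the graded Leibniz rule.
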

Recall that $j^*:\D(\widetilde{X})\to\D(X)$ is a triangulated equivalence and that 
$\mathbb{T}_{X/Y}:=j^*T_{\widetilde{X}/Y}\cong N[-1]$ in $\D(X)$. 
In particular, we get from the above that $(i_*\mathcal O_X,i_*N[-1])$ is a Lie algebroid in $\D(X)$. 

\subsubsection*{What is the universal enveloping algebra of this Lie algebroid ?}

The enveloping algebra of a Lie algebroid with base $X$ has to be an $\mathcal O_{X\times X}$-module set-theoretically supported on the 
diagonal. We answer that the universal enveloping algebra of the Lie algebroid of $X\times^h_YX$ is the kernel $K$ of $i^*i_\dagger=(i^*i_*)^\vee$, 
where the functor $i_\dagger$ is the left adjoint of $i^*$ (see Subsection~\ref{sec-2.5} for a formula of $i_\dagger$). The algebra structure $K\circ K\to K$ is given by the natural transformation $i^*i_\dagger i^*i_\dagger \implies i^*i_\dagger $. 
More precisely (see Subsection~\ref{sec-2.5} and Section~\ref{sec:mon}): 
\begin{thm}\label{thm:intro}
Let $X\overset{j}{\to}\widetilde{X}\overset{\pi}{\to}Y$ be a smooth resolution of the inclusion morphism $i:X\hookrightarrow Y$. 
Then we have a natural quasi-isomorphism of dg-algebras 
$\U(T_{\widetilde{X}/Y})\longrightarrow\mathcal{H}om_{\mathcal O_Y}(\mathcal O_{\widetilde{X}},\mathcal O_{\widetilde{X}})$. 
In other words, $\U(T_{\widetilde{X}/Y})$ represents the monad $i^*i_\dagger $. 
\end{thm}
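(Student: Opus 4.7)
My plan is to construct the map $\Phi$ via the universal property of the enveloping algebra of a dg-Lie algebroid, and then prove that $\Phi$ is a quasi-isomorphism by a PBW filtration argument. The sheaf $\mathcal O_{\widetilde{X}}$ is tautologically a representation of the dg-Lie algebroid $(T_{\widetilde{X}/Y},\mathcal O_{\widetilde{X}})$ over $\mathcal O_Y$: sections of $\mathcal O_{\widetilde{X}}$ act by multiplication, sections of $T_{\widetilde{X}/Y}$ act as $\mathcal O_Y$-linear derivations, and the compatibility $[v,f]=v(f)$ is precisely the Lie-algebroid representation axiom. The universal property of $\U(T_{\widetilde{X}/Y})$ then yields a natural dg-algebra homomorphism
\[
\Phi:\U(T_{\widetilde{X}/Y})\longrightarrow\mathcal{H}om_{\mathcal O_Y}(\mathcal O_{\widetilde{X}},\mathcal O_{\widetilde{X}}).
\]

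To show $\Phi$ is a quasi-isomorphism, I would use the standard PBW (Rinehart) filtration on $\U(T_{\widetilde{X}/Y})$ and the order filtration by $\mathcal O_Y$-linear differential operators on $\mathcal O_{\widetilde{X}}$. The map $\Phi$ is filtered, and by Rinehart's PBW theorem in the dg setting the associated graded map is the principal-symbol map $\mathrm{Sym}_{\mathcal O_{\widetilde{X}}}(T_{\widetilde{X}/Y})\to\mathrm{gr}\,\mathcal D_{\widetilde{X}/Y}$. This is an isomorphism by a local computation in the case where $E$ is a free graded $\mathcal O_Y$-module: here $T_{\widetilde{X}/Y}\cong\mathcal O_{\widetilde{X}}\otimes_{\mathcal O_Y}(E[1])^\vee$ and both associated gradeds identify explicitly with $\mathcal O_{\widetilde{X}}\otimes_{\mathcal O_Y}\bS_{\mathcal O_Y}((E[1])^\vee)$.

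The main obstacle has two aspects. First, one must show that the inclusion $\mathcal D_{\widetilde{X}/Y}\hookrightarrow\mathcal{H}om_{\mathcal O_Y}(\mathcal O_{\widetilde{X}},\mathcal O_{\widetilde{X}})$ of finite-order operators into all $\mathcal O_Y$-linear operators is itself a quasi-isomorphism, i.e., that the ``infinite-order'' quotient is acyclic. This is essentially a Koszul-type statement: the dg-structure of $\mathcal O_{\widetilde{X}}$ (a resolution of $i_*\mathcal O_X$) should make the higher-order contributions contractible. Second, a spectral-sequence argument on the filtered complexes should lift the associated-graded quasi-isomorphism to $\Phi$ itself; convergence holds because the filtrations are compatible and, in each fixed cohomological degree, bounded below by the non-positivity of $E$. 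Independently, one may verify via HKR that the total cohomology $H^*\bigl(\mathcal{H}om_{\mathcal O_Y}(\mathcal O_{\widetilde{X}},\mathcal O_{\widetilde{X}})\bigr)\cong R\mathcal{H}om_{\mathcal O_Y}(i_*\mathcal O_X,i_*\mathcal O_X)\cong i_*\bigoplus_k\wedge^k N[-k]$ agrees with the Koszul-type computation of $H^*(\U(T_{\widetilde{X}/Y}))$ under the identification $j^*(E[1])^\vee\cong N[-1]$, giving a useful cross-check.

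Finally, the monad identification is immediate from the quasi-isomorphism: the dg-algebra $\mathcal{H}om_{\mathcal O_Y}(\mathcal O_{\widetilde{X}},\mathcal O_{\widetilde{X}})$ tautologically represents the monad $i^*i_\dagger$ via the triangulated equivalence $j^*:\D(\widetilde{X})\to\D(X)$ together with the formula for $i_\dagger$ developed in Subsection~\ref{sec-2.5}; hence so does $\U(T_{\widetilde{X}/Y})$ via $\Phi$.
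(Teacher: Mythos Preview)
Your outline is correct and closely parallels the paper's first sketch in Subsection~\ref{sec-2.5}: the map $\Phi$ is produced from the universal property, and one then argues it is a quasi-isomorphism. Two remarks on how the paper streamlines what you do.

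First, the identification $\U(T_{\widetilde{X}/Y})\cong\mathcal D_{\widetilde{X}/Y}$ that you establish via a PBW/spectral-sequence argument is already recorded in Example~\ref{exa:diff}, so the paper does not redo it. The substantive step is therefore exactly the one you flag as the ``main obstacle'': showing that the inclusion of finite-order operators $\mathcal D_{\widetilde{X}/Y}\hookrightarrow\mathcal{H}om_{\mathcal O_Y}(\mathcal O_{\widetilde{X}},\mathcal O_{\widetilde{X}})$ is a quasi-isomorphism. Here the paper is more concrete than your ``Koszul-type'' gesture: it points to the $\underline{\Ext}$-analogue of the proof of Theorem~\ref{thm:complete}. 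Since $Y$ is smooth, $\underline{\Ext}^i_{\mathcal O_Y}(i_*\mathcal O_X,i_*\mathcal O_X)$ vanishes for large $i$, while the quotient $\mathcal{H}om/\mathcal D^{\leq k}$ sits in cohomological degrees $\geq k+1$ (because $T_{\widetilde{X}/Y}$, being generated by $(E[1])^\chk$, lives in strictly positive degrees). Hence the inclusion $\mathcal D^{\leq k}\hookrightarrow\mathcal{H}om$ is already a quasi-isomorphism for $k$ large, and passing to the colimit gives the result. Your HKR cross-check confirms the cohomology groups match but does not by itself show that $\Phi$ realizes the isomorphism; the degree argument above does.

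Second, the paper also gives an independent, shorter route that you do not use: the functor $\U(\mathfrak g)\otimes_{\mathcal A}-$ is left adjoint to $\sHom_{\mathcal A}(\U(\mathfrak g),-)\cong{\bf J}(\mathfrak g)\hat\otimes_{\mathcal A}-$, and Theorem~\ref{thm:intro2} (proved via Proposition~\ref{thm:ug} and Theorem~\ref{thm:complete}) already identifies the latter with $i^*i_*$. Uniqueness of left adjoints then forces $\U(T_{\widetilde{X}/Y})$ to represent $i^*i_\dagger$. This bypasses the direct comparison with $\mathcal{H}om_{\mathcal O_Y}(\mathcal O_{\widetilde{X}},\mathcal O_{\widetilde{X}})$ altogether.
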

One can actually prove that the above identifications are somehow ``compatible with coproducts''. 
This can be seen {\it via} the following dual statement: 
\begin{thm}\label{thm:intro2}
Let $X\overset{j}{\to}\widetilde{X}\overset{\pi}{\to}Y$ be a smooth resolution of the inclusion morphism $i:X\hookrightarrow Y$. Let ${\bf J} (T_{\widetilde{X}/Y})$ be the infinite jet algebra of the Lie algebroid $T_{\widetilde{X}/Y}$.
Then we have a natural quasi-isomorphism of dg-algebras 
${\bf J}(T_{\widetilde{X}/Y})\longrightarrow\mathcal{O}_{\widetilde{X}\times_Y\widetilde{X}}$. 
In other words, ${\bf J}(T_{\widetilde{X}/Y})$ represents the monad $i^*i_*$. 
\end{thm}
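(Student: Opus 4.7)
I view the statement as the dg/derived version of the classical fact that, for a formally smooth morphism of ordinary schemes $f:\widetilde X\to Y$, the formal neighborhood of the diagonal $\Delta:\widetilde X\hookrightarrow\widetilde X\times_Y\widetilde X$ is the structure ring of the formal groupoid integrating the relative tangent Lie algebroid $T_{\widetilde X/Y}$, i.e.\ the infinite jet algebra ${\bf J}(T_{\widetilde X/Y})$. The plan is: (i) construct a natural morphism $\Phi:{\bf J}(T_{\widetilde X/Y})\to\mathcal O_{\widetilde X\times_Y\widetilde X}$ of dg-algebras; (ii) observe that in our dg setting no formal completion is needed on the target; (iii) prove the map is a quasi-isomorphism via a PBW-filtration argument.

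\textbf{Construction of $\Phi$.} By the Proposition preceding Theorem~\ref{thm:intro}, the pair $(\widetilde X,\widetilde X\times_Y\widetilde X)$ is a dg-groupoid in $Y$-schemes whose dg Lie algebroid is $T_{\widetilde X/Y}$, so the universal property of the formal integration of a Lie algebroid produces a canonical morphism of dg-algebras ${\bf J}(T_{\widetilde X/Y})\to\hat{\mathcal O}_{\widetilde X\times_Y\widetilde X}^{\Delta}$. Equivalently, using the PBW identification ${\bf J}(T_{\widetilde X/Y})\cong\hat\bS_{\mathcal O_{\widetilde X}}(\Omega^1_{\widetilde X/Y})$ of graded $\mathcal O_{\widetilde X}$-modules, $\Phi$ is produced by extending the conormal identification $\Omega^1_{\widetilde X/Y}\cong I_\Delta/I_\Delta^2$ to the completed symmetric algebra (with the exponential-type twist encoded by the Lie algebroid data). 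The construction can also be assembled by dualizing the quasi-isomorphism of Theorem~\ref{thm:intro} along the pairing $\mathcal O_{\widetilde X}\otimes_{\mathcal O_Y}\mathcal O_{\widetilde X}\otimes\sHom_{\mathcal O_Y}(\mathcal O_{\widetilde X},\mathcal O_{\widetilde X})\to\mathcal O_{\widetilde X}$, $(a\otimes b,\phi)\mapsto a\cdot\phi(b)$.

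\textbf{Completion is tautological.} Because $\mathcal O_{\widetilde X}=\bS_{\mathcal O_Y}(E[1])$ has generators in strictly negative cohomological degree, the diagonal ideal $I_\Delta=\ker(\mathcal O_{\widetilde X}\otimes_{\mathcal O_Y}\mathcal O_{\widetilde X}\to\mathcal O_{\widetilde X})$ is generated by the elements $e\otimes 1-1\otimes e$ with $e\in E[1]$, which are also of strictly negative degree. Consequently $I_\Delta^n$ is concentrated in cohomological degrees $\le -n$, and in each fixed cohomological degree the $I_\Delta$-adic filtration terminates after finitely many steps. Hence $\hat{\mathcal O}^{\Delta}_{\widetilde X\times_Y\widetilde X}=\mathcal O_{\widetilde X\times_Y\widetilde X}$ on the nose, and $\Phi$ lands in the claimed target.

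\textbf{Quasi-isomorphism and main obstacle.} Filter source and target compatibly: the source by the decreasing filtration dual to the PBW order filtration on $\U(T_{\widetilde X/Y})$, and the target by the $I_\Delta$-adic filtration. By the smoothness of the resolution $\widetilde X\to Y$, the associated graded of the target is $\bS_{\mathcal O_{\widetilde X}}(\Omega^1_{\widetilde X/Y})$; by the dg PBW theorem (readable off from Theorem~\ref{thm:intro}), the associated graded of the source is $\hat\bS_{\mathcal O_{\widetilde X}}(T^\vee_{\widetilde X/Y})\cong\hat\bS_{\mathcal O_{\widetilde X}}(\Omega^1_{\widetilde X/Y})$. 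Compatibility of $\Phi$ with the identity section and with the conormal identification forces $\gr\Phi$ to be the tautological isomorphism, and the filtration spectral sequence converges in each cohomological degree thanks to the finiteness established in the previous paragraph, so $\Phi$ is a quasi-isomorphism. The monadic interpretation then follows because $j^*:\D(\widetilde X)\to\D(X)$ is a triangulated equivalence and $\mathcal O_{\widetilde X\times_Y\widetilde X}$ corepresents $i^*i_*$ via the two projections. The main obstacle is verifying that $\Phi$ is a map of dg \emph{algebras} rather than merely of complexes: this amounts to matching the Hopf-algebroid coproduct on $\U(T_{\widetilde X/Y})$ (dual to the product on ${\bf J}$) with the groupoid composition law on $\widetilde X\times_Y\widetilde X$, and requires a careful unpacking of the explicit Koszul-type resolution $\bS_{\mathcal O_Y}(E[1])$.
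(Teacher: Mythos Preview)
Your argument is essentially correct, and its two substantive steps match the paper's proof, though the packaging differs.

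The paper's proof runs in the opposite direction and avoids your ``main obstacle'' entirely. Rather than constructing a map ${\bf J}(T_{\widetilde X/Y})\to\mathcal O_{\widetilde X\times_Y\widetilde X}$, the paper writes down the explicit dg-algebra morphism
\[
\mathcal O_{\widetilde X}\otimes_{\mathcal O_Y}\mathcal O_{\widetilde X}\longrightarrow{\bf J}(T_{\widetilde X/Y}),\qquad a\otimes a'\longmapsto\big(P\mapsto aP(a')\big),
\]
which is manifestly multiplicative (no unpacking of Hopf algebroid coproducts or Koszul combinatorics is needed). That this map becomes an isomorphism after $I_\Delta$-adic completion is Grothendieck's classical description of differential operators (the paper's Example \ref{exa:jets} applied to the smooth graded morphism $\mathcal O_Y\to\mathcal O_{\widetilde X}^\sharp$), so no PBW spectral sequence is required. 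What remains is exactly your step (ii), and here the paper argues that the quotient maps $\mathcal O_{\widetilde X\times_Y\widetilde X}\to\mathcal O_\Delta^{(k)}$ are quasi-isomorphisms for large $k$, using both that $\mathcal J$ sits in strictly negative degrees \emph{and} that $\underline{\Tor}^i_Y(\mathcal O_X,\mathcal O_X)$ is bounded. Your version of this step is actually sharper: the degree bound on $I_\Delta^n$ alone forces the $I_\Delta$-adic tower to stabilize degreewise, so the completion map is an isomorphism of complexes on the nose and the Tor bound is not needed.

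In short: the paper trades your PBW/spectral-sequence argument for an explicit formula that makes the algebra-map property automatic, and compensates with a slightly more roundabout (quasi-isomorphism rather than isomorphism) completeness argument. Your route works, but note that your construction of $\Phi$ is the one soft spot---invoking ``formal integration'' or an abstract PBW identification does not by itself produce a dg-\emph{algebra} map, whereas the paper's formula does so immediately.
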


\subsubsection*{What is the Chevalley-Eilenberg algebra of this Lie algebroid ?}

We answer that this is quasi-isomorphic to the structure ring $\mathcal O_{X_{Y}^\infty}$ of the formal neighborhood 
$X_{Y}^\infty$ of the embedding $i:X\hookrightarrow Y$, generalizing Kapranov's result in the case diagonal embeddings.
See Theorem \ref{thm:ce}. 

\medskip

By a kind of homotopy transfer procedure one should get a minimal $L_\infty$-algebroid structure on $N[-1]$ 
(see Definition \ref{def:lie}). 
We actually don't know how to get homotopy transfer for sheaves, but are able to prove the following 
(it appears as Proposition \ref{coro:main} in Section \ref{sec:homotopy}): 
\begin{prop}
There exists a minimal $L_\infty$-algebroid structure on $N[-1]$ whose Chevalley-Eilenberg algebra is quasi-isomorphic 
$\mathcal O_{X_{Y}^\infty}$. 
\end{prop}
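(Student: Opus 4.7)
The strategy is to sidestep the missing homotopy transfer for sheaves of dg-Lie algebroids by working on the Koszul dual side, where the constructions are commutative-algebraic and at our disposal. Theorem \ref{thm:ce} gives a strict comparison $C^*(T_{\widetilde{X}/Y})\simeq\mathcal{O}_{X_Y^\infty}$, so the task reduces to producing a minimal cdga model of $\mathcal{O}_{X_Y^\infty}$ whose underlying graded algebra is the completed symmetric algebra $\widehat{S}_{\mathcal{O}_X}(N^\vee)$. Under the Chevalley-Eilenberg construction for $L_\infty$-algebroids (Definition \ref{def:lie}), such a cdga corresponds precisely to a minimal $L_\infty$-algebroid structure on $N[-1]$: the anchor is encoded in the component of the differential that couples $\mathcal{O}_X$-functions to $N^\vee$, while the higher brackets are encoded in the components that raise symmetric degree.

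To build the minimal model I would proceed inductively along the $I$-adic filtration of $\mathcal{O}_{X_Y^\infty}$, where $I$ is the ideal sheaf of $X$ in $Y$; its associated graded is $\widehat{S}_{\mathcal{O}_X}(N^\vee)$ because $I^k/I^{k+1}\cong S^k(N^\vee)$. Starting from the canonical identification $\mathcal{O}_{X_Y^\infty}/I\cong\mathcal{O}_X$, at the $n$-th stage I would construct a cdga quasi-isomorphism $\Phi_n$ from the truncation of $\widehat{S}_{\mathcal{O}_X}(N^\vee)$ in symmetric degrees $\leq n$, equipped with a partial differential $d_n$, to $\mathcal{O}_{X_Y^\infty}/I^{n+1}$. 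Passing from stage $n$ to stage $n+1$ requires augmenting $d_n$ by a new term in symmetric degree $n+1$ that compensates for the failure of $\Phi_n$ to be a strict algebra map modulo $I^{n+2}$. These compensating terms are exactly the higher $L_\infty$-brackets on $N[-1]$; at the first nontrivial stage, the adjustment recovers the dual of the Kodaira-Spencer map, matching the anchor $N[-1]\to T_X$ predicted by the Proposition earlier in the excerpt.

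The main obstacle is verifying that this inductive step is always unobstructed. At each stage, the obstruction to extending $\Phi_n$ (after suitable modification of $d_n$) lies in a Harrison or Andr\'e-Quillen cohomology group of $\widehat{S}_{\mathcal{O}_X}(N^\vee)$; this obstruction must vanish because $\mathcal{O}_{X_Y^\infty}$ is an actual sheaf of cdgas toward which the partial models converge. The $L_\infty$-algebroid identities satisfied by the resulting brackets (including Leibniz compatibility with the anchor) are equivalent to $d^2=0$ on the limit minimal model, which holds automatically since at each stage $d_n\circ d_n$ receives no new contributions modulo $I^{n+1}$. The quasi-isomorphism between the Chevalley-Eilenberg algebra of the constructed $L_\infty$-algebroid and $\mathcal{O}_{X_Y^\infty}$ is then built into the construction.
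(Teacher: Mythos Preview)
Your proposal has a genuine gap at its foundation. Both $\mathcal{O}_{X_Y^\infty}$ and $\widehat{S}_{\mathcal{O}_X}(N^\chk)$ are sheaves of commutative algebras concentrated in cohomological degree $0$. Hence any ``partial differential $d_n$'' you place on $\widehat{S}_{\mathcal{O}_X}(N^\chk)$ (or a truncation thereof) is necessarily zero, and any ``quasi-isomorphism'' $\Phi_n$ between degree-$0$ objects is just an honest isomorphism of sheaves of algebras. But the whole point is that $\mathcal{O}_{X_Y^\infty}/\mathcal{I}^{n+1}$ and $\bS^{\leq n}_{\mathcal{O}_X}(N^\chk)$ are \emph{not} globally isomorphic in general: the failure is measured precisely by the obstruction classes in $\Ext^1$ that appear later in Section~\ref{sec:homotopy}. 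So your inductive step is not ``always unobstructed''; the Harrison/Andr\'e--Quillen obstruction you invoke does not vanish just because a global target exists.

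What you have overlooked is that Definition~\ref{def:lie} does not put the derivation $Q$ on $\widehat{S}_{\mathcal{O}_X}(N^\chk)$ itself, but on $\hat{\bS}_{C^*_{\TW}(\mathcal{O}_X)}\big(C^*_{\TW}(N^\chk)\big)$. The Thom--Whitney resolution supplies the extra cohomological degrees into which the higher anchor maps $a_k$ and higher brackets $l_k$ can land. The paper's argument exploits exactly this: by Lemma~\ref{local} one has local isomorphisms $\Phi_{i_0}:\mathcal{O}_X^{(\infty)}|_{U_{i_0}}\to\widehat{\bS}_{\mathcal{O}_X}(N^\chk)|_{U_{i_0}}$ on an affine cover, the transition maps $\Phi_{i_0i_1}$ form a non-abelian $1$-cocycle in $\Aut^+\big(\widehat{\bS}_{\mathcal{O}_X}(N^\chk)\big)$, and the equivalence of Corollary~\ref{cor:mc} converts this cocycle into a Maurer--Cartan element $\theta$ in $C^*_{\TW}\big(\Der^+(\widehat{\bS}_{\mathcal{O}_X}(N^\chk))\big)$. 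Theorem~\ref{thm:taut} then shows that the twisted complex $\big(\cC^*_{\TW}(\widehat{\bS}_{\mathcal{O}_X}(N^\chk)),d_{\TW}+Q\big)$ resolves $\mathcal{O}_X^{(\infty)}$. The \v Cech/simplicial direction is where your missing differential lives; your global inductive construction has no substitute for it.
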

We obtain that way ``higher anchor maps'' 
that can be interpreted as succesive obstructions (which already appeared in \cite{ABT}) against being able to split 
the embedding of $X$ into its $l$-th infinitesimal neighborhood $X^{(l)}_Y$. We also obtain ``higher 
brackets'' which are interpreted as succesive obstructions (which also appeared in \cite{ABT}) against linearizing $X^{(l)}_Y$. 

\subsection{Plan of the paper}

We start in Section~\ref{sec:dg} by describing the dg-Lie algebroid associated with a closed embedding. We first recall quickly some 
known facts about the (co)tangent complex and about Lie algebroids, with some emphasis on relative derivations. 
The Lie algebroid structure on the relative tangent complex of a morphism $f:X\to Y$ then becomes obvious. 
We finally prove that, in the case of a closed embedding $i:X\hookrightarrow Y$, the Chevalley-Eilenberg algebra of this Lie algebroid 
is quasi-isomorphic to the structure sheaf of $X_Y^{(\infty)}$, its jet algebra represents $i^*i_*$ and its universal enveloping algebra 
represents $i^*i_\dagger $. 

In Section~\ref{sec:mon} we show that the above identifications are compatible with natural Hopf-like structures on these objects. 

In Section~\ref{sec:deform} we review about cosimplicial methods and a correspondence betwen Maurer-Cartan elements and non-abelian $1$-cocycles. 

Section~\ref{sec:homotopy} is devoted to the construction of an $L_\infty$-algebroid structure on the shifted normal bundle $N[-1]$ of $X$ into $Y$, 
which is such that its Chevalley-Eilenberg algebra is also quasi-isomorphic to the structure sheaf of $X_Y^{(\infty)}$. 
We then prove that the structure maps of this $L_\infty$-algebroid structure provides obstructions to splitting the inclusion $X\to X_Y^{(k)}$ 
and linearizing $X_Y^{(k)}$. This allows us to recover recent results of \cite{ABT}.

\subsection{Acknowledgements}

Shilin Yu \cite{Yu} independently obtained similar results for complex manifolds (more in the spirit of Kapranov's original formulation). 
We warmly thank him for the numerous enlightening discussions we had before and during the writing of the present paper. We are also grateful to Alexander Polishchuk from whom we learned the proof of Lemma~\ref{local}.
The work of D.C. is supported by the Swiss National Science Foundation (grant number $200021\underline{~}137778$). 

\subsection{Notation}

We work over an algebraically closed field ${\bf k}$ of characteristic zero. 
Unless otherwise specified all (dg-)algebras, (dg-)schemes, varieties, etc... are over ${\bf k}$. 

\medskip

\noindent We often denote by $\pt$ the terminal dg-scheme. Namely, $\pt:=\Spec({\bf k})$. 

\medskip

\noindent Whenever we have a dg-algebra $A$ we denote by $A^\sharp$ the underlying graded algebra and by $d_A$ its differential. 
Then $A=(A^\sharp,d_A)$. 
Similarly, given a dg-scheme $X$ we denote by $X_\sharp$ the dg-scheme having the same underlying scheme and structure sheaf $\cO_{X_\sharp}:=\cO_{X}^\sharp$. 

\medskip

\noindent For a graded module $E$ and an integer $k$, we denote by $E[k]$ the graded module whose $l$-th graded piece is the $k+l$-th graded piece of $E$. The obious maps $E\longrightarrow E[-1]$ and $E\longrightarrow E[1]$ are respectively denoted by $s$ and $e\mapsto \bar{e}$. 

\medskip

\noindent We freely extend the above notation to sheaves. 

\medskip

\noindent By a Lie algebroid we mean a sheaf of Lie-Rinehart algebras (L-R algebras in \cite{R}, to which we refer for more details). 

\section{The dg-Lie algebroid of a closed embedding}\label{sec:dg}

In this section we construct a differential graded Lie algebroid associated to a closed embedding $i:X\hookrightarrow Y$ of smooth algebraic varieties. 
We show that the Chevalley-Eilenberg algebra of this differential graded Lie algebroid is quasi-isomorphic to the formal neighborhood algebra of $X$ in $Y$; while the dual of its universal enveloping algebra is quasi-isomorphic to the formal neighborhood of $X$ inside the derived self-intersection $X\times^h_Y X$. 
We shall work with Ciocan-Fontanine and Kapranov's dg-schemes~\cite[Section 2]{CK}.

\subsection{Short review of the relative (co)tangent complex}

Let $i:X\hookrightarrow Y$ be a closed embedding of smooth algebraic varieties with normal bundle $N$. 
Let us also assume that $Y$ is quasi-projective to ensure existence of resolutions by locally free sheaves on $Y$. 
By the constructions of~\cite[Theorem 2.7.6]{CK} the map $i$ can be factored as
$$
X \overset{j}{\longrightarrow} \widetilde{X} \overset{\pi}{\longrightarrow} Y\,,
$$
where $j$ is a quasi-isomorphic closed embedding and $\pi$ is smooth. 
Moreover since we are assuming $Y$ is quasi-projective and smooth, the ordinary scheme underlying the dg-scheme $\widetilde{X}$ can in fact be taken to be just $Y$. 
Its structure sheaf is of the form 
$$
\cO_{\widetilde{X}}:=\big(\bS_{\cO_Y}(E[1]),Q\big)
$$
for some non-positively graded locally free sheaf $E$ on $Y$, and $Q$ is a degree one $\cO_Y$-linear derivation on $\bS_{\cO_Y}(E[1])$ that squares to zero. 
The appearance of a shift on $E$ is to mimic the case when $X$ is a complete intersection in $Y$. 
By construction there is a quasi-isomorphism of dg-$\mathcal O_Y$-algebras
$$
\cO_{\widetilde{X}} \longrightarrow j_*\cO_X\,.
$$
The relative cotangent complex $\mathbb{L}_{X/Y}$ is by definition 
$$
\mathbb{L}_{X/Y}:=j^*L_{\wtd{X}/Y}\,,\quad\textrm{where}\quad
L_{\wtd{X}/Y}:=\big(\Omega^1_{\cO_{\wtd{X}}^\sharp/\cO_Y},L_Q\big)\,.
$$
The differential $L_Q$ is given by the Lie derivation action on the graded space of $1$-forms $\Omega^1_{\cO_{\wtd{X}}^\sharp/\cO_Y}$. 
Since $Q$ is an odd derivation that squares to zero, its Lie derivative $L_Q$ also squares to zero because $L_Q^2=\frac{1}{2}L_{[Q,Q]}=0$. 
It is well-defined up to quasi-isomorphism (see \cite[Proposition 2.7.7]{CK} for a precise statement). 
Moreover, the restriction morphism $L_{\wtd{X}/Y}\longrightarrow j_*j^*L_{\wtd{X}/Y}=j_*\mathbb{L}_{X/Y}$ is a quasi-isomorphism. 
In particular this implies that the dg-sheaf $L_{\wtd{X}/Y}$ is isomorphic to $j_*N^\chk[1]$ as an object in $\D(\wtd{X})$. 
Namely, we have two distinguished triangles 
$$
i^*\Omega^1_{Y/\pt}\longrightarrow\Omega^1_{X/\pt}\longrightarrow \mathbb{L}_{X/Y}\overset{+1}{\longrightarrow}
\qquad\textrm{and}\qquad
N^\chk\longrightarrow i^*\Omega^1_{Y/\pt}\longrightarrow \Omega^1_{X/\pt}\overset{+1}{\longrightarrow}\,.
$$

Dually we can define the tangent complex $\mathbb{T}_{X/Y}$ as $j^*T_{\wtd{X}/Y}$, where $T_{\wtd{X}/Y}:=\Der_{\cO_Y}(\cO_{\wtd{X}})$ is endowed with the differential $[Q,-]$. 
The fact that this differential squares to zero follows from the graded version of Jacobi identity for vector fields. 
The dg-sheaf $T_{\wtd{X}/Y}$ is isomorphic to $i_*N[-1]$ in $\D(Y)$\footnote{We are making here a slight abuse of language. 
Strictly speaking, $T_{\wtd{X}/Y}$ is quasi-isomorphic to $j_*N[-1]$ as an $\mathcal O_{\wtd{X}}$-module and thus $\pi_*T_{\wtd{X}/Y}$ 
is quasi-isomorphic to $i_*N[-1]$ as an $\mathcal O_Y$-module. But for an $\mathcal O_{\wtd{X}}$-module $F$, $\pi_*F$ is simply 
the same sheaf (on $Y$) viewed as an $\mathcal O_Y$-module in the obvious way. We will therefore allow ourselves to omit $\pi_*$ from the notation when the context is clear enough. }. 

\subsection{Recollection on (dg-)Lie algebroids}

There is a natural Lie bracket on $T_{\wtd{X}/Y}$, being the space of $\cO_Y$-linear derivations of $\cO_{\wtd{X}}$. 
More precisely, the pair $(\cO_{\wtd{X}},T_{\wtd{X}/Y})$ is a dg-Lie algebroid. 
In this subsection we recall definitions of three dg-algebras naturally associated with a dg-Lie algebroid: the Chevalley-Eilenberg algebra, the universal enveloping algebra, and the jet algebra. 

\subsubsection{Chevalley-Eilenberg algebra of a Lie algebroid}~\label{subsec:ce}

Let $(\mathcal A,\mathfrak g)$ be a dg-Lie algebroid with anchor map $\rho$ and Lie bracket $\mu$. 
We first consider the dg-commutative algebra $\widehat{\bS}_{\mathcal A}(\mathfrak g^\chk[-1])$, which is the adic-completion of the symmetric algebra w.r.t.~the kernel of the augmentation $\bS_{\mathcal A}(\mathfrak g^\chk[-1])\ra\mathcal A$. 
We then replace the differential $d$ by $D:=d+d_{\CE}$, where $d_{\CE}$ is defined on generators $a\in\mathcal A$ and $\xi\in\mathfrak g^\chk[-1]$ as follows\footnote{We omit (de)suspension maps from the second formul\ae.}: 
\begin{itemize}
\item $d_{\CE}(a):=s\circ \rho^\chk (da)$, and
\item the operator $d_{\CE}(\xi)\in \bS^2_\cA g^\chk[-1]$ acts on a pair of $(v,w)\in g^2$ by formula
\[ d_{\CE}(\xi)(v,w):= \rho(v)(\xi(w))-\rho(w)(\xi(v))-\xi(\mu(v,w)).\]
\end{itemize}
Then the Chevalley-Eilenberg algebra $C^*(\mathfrak g)$ is the dg-commutative algebra 
$\big(\widehat{\bS}_{\mathcal A}(\mathfrak g^\chk[-1])^\sharp,D\big)$. 
We also denote by $C^{(k)}(\mathfrak g)$ its quotient by the $k+1$-th power of the augmentation kernel. 

\begin{exa}[Relative De Rham complex]\label{exa:DR}
Let $\mathcal B\rightarrow\mathcal A$ be a morphism of (sheaves of) dg-commutative algebras, and set $\mathfrak g:=\Der_{\mathcal B}(\mathcal A)$. We equip $(\mathcal A,\mathfrak g)$ with a dg-Lie algebroid structure: $\mathfrak g$ has $[d_{\mathcal A},-]$ as differential, the graded commutator as Lie bracket, and the inclusion $\Der_{\mathcal B}(\mathcal A)\hookrightarrow \Der_{\bf k}(\mathcal A)$ as anchor map. 
The Chevalley-Eilenberg algebra is then the completion of the relative De Rham algebra $\Omega^*_{\mathcal A/\mathcal B}$. 
As a graded algebra $\Omega^*_{\mathcal A/\mathcal B}$ is $\bS_{\mathcal A^\sharp}(\Omega^1_{\mathcal A^\sharp/\mathcal B^\sharp}[-1])$, 
and the two commuting differentials are $d=L_{d_B}$ and $d_{\CE}=d_{DR}$. 
\end{exa}

\subsubsection{Universal enveloping algebra of a Lie algebroid}\label{sec:2.2.2}

We borrow the notation from the previous paragraph. 
First observe that $\mathcal A$ being acted on by $\mathfrak g$ one can consider the semi-direct product dg-Lie algebra $\mathcal A\rtimes\mathfrak g$ and take its (ordinary) universal enveloping algebra $U(\mathcal A\rtimes\mathfrak g)$, which is ${\bf k}$-augmented. 
We define the universal enveloping algebra $\U(\mathfrak g)$ of $(\mathcal A,\mathfrak g)$ as the quotient of the augmentation ideal $U^+(\mathcal A\rtimes\mathfrak g)$ by the following relations: for any $a\in\mathcal A$ and $x\in\mathcal A\oplus\mathfrak g$, $ax=a\cdot x$, where $ax$ stands for the product in the algebra $U(\mathcal A\rtimes\mathfrak g)$, and $\cdot$ denotes the $\mathcal A$-module structure on $\mathcal A\oplus\mathfrak g$. 	
It is a (non-central) dg-$\mathcal A$-algebra. 
As such it is naturally endowed with a compatible $\mathcal A$-bimodule structure. 

Moreover, $\U(\mathfrak g)$ is also endowed with the structure of a cocommutative coring in the category of {\it left} $\mathcal A$-modules: 
we have a coproduct $\Delta:\U(\mathfrak g)\to \U(\mathfrak g)\otimes_{\mathcal A}\U(\mathfrak g)$, where we only consider the {\it left} $\mathcal A$-module structure on $\U(\mathfrak g)$ when taking the tensor product. 
Notice that $\U(\mathfrak g)$ also acts on $\mathcal A$. 
All these algebraic structures and their compatibilities can be summarized in the following way: 
$\U(\mathfrak g)$ is a (dg-)bialgebroid (see \cite{BM} and references therein for a survey of the equivalent definitions). 
\begin{exa}[Relative differential operators]\label{exa:diff}
Let $\mathcal B\to\mathcal A$ and $\mathfrak g$ be as in Example \ref{exa:DR}. 
In this case $\U(\mathfrak g)$ coincides with ${\it Diff}_{\mathcal B}(\mathcal A)$, the sheaf of $\mathcal B$-linear differential operators on $\mathcal A$ endowed with the differential $[d_{\mathcal B},-]$. We easily see that the $\mathcal A$-bimodule structure factors through an action of $\mathcal A\otimes_{\mathcal B}\mathcal A$. Finally, the coproduct is given by $\Delta(P)(a,a')=P(aa')$. 
\end{exa}

\subsubsection{The jet algebra of a Lie algebroid}\label{sec:2.2.3}

We still borrow the notation from Paragraph~\ref{subsec:ce}. 
We define the jet $\mathcal A$-bimodule ${\bf J}(\mathfrak g)$ as the left $\mathcal A$-dual of $\U(\mathfrak g)$: 
${\bf J}(\mathfrak g):=\sHom_{\mathcal A\textrm{-}mod}\big(\U(\mathfrak g),\mathcal A\big)$. 
Therefore ${\bf J}(\mathfrak g)$ becomes a commutative ring in the category of {\it left} $\mathcal A$-modules. 
The $\mathcal A$-bimodule structure on ${\bf J}(\mathfrak g)$ can then be described by two dg-algebra morphisms $\mathcal A\longrightarrow{\bf J}(\mathfrak g)$: $a\longmapsto\big(P\mapsto aP(1)\big)$ and $a\longmapsto\big(P\mapsto P(a)\big)$.  

Observe that $\U(\mathfrak{g})$ is endowed with an increasing filtration obtained by assiging degree $0$, resp.~degree $1$, to elements of $\mathcal A$, resp.~$\mathfrak g$. All the algebraic structures we have seen on $\U(\mathfrak{g})$ are compatible with the filtration; in particular 
$\U(\mathfrak g)^{\leq k}$ is a sub-$\mathcal A$-coring in $\U(\mathfrak g)$. Therefore the quotient 
${\bf J}^{(k)}(\mathfrak g):= \sHom_{\mathcal A\textrm{-}mod}\big(\U(\mathfrak g)^{\leq k},\mathcal A\big)$ of ${\bf J}(\mathfrak g)$ 
inherits from it an $\mathcal A$-algebra structure. 
\begin{exa}[Relative jets]\label{exa:jets}
Let $\mathcal B\to\mathcal A$ and $\mathfrak{g}$ be as in Example \ref{exa:DR}, and denote by $\mathcal J$ the (dg-)ideal of the 
multiplication map $\mathcal A\otimes_{\mathcal B} \mathcal A\ra \mathcal A$. We have a morphism of dg-algebras
$$
\mathcal A\otimes_{\mathcal B}\mathcal A\longrightarrow {\bf J}(\mathfrak g)\,,\,a\otimes a'\longmapsto\big(P\mapsto aP(a')\big)\,.
$$
This induces morphisms $\mathcal A\otimes_{\mathcal B}\mathcal A/\mathcal J^{k+1}\longrightarrow {\bf J}^{(k)}(\mathfrak g)$, which happen to be isomorphisms whenever the underlying morphism of graded algebra $\mathcal B^\sharp\to\mathcal A^\sharp$ is smooth. 
This gives an isomorphism between $\mathcal A\hat\otimes_{\mathcal B}\mathcal A:=\underleftarrow{\lim}\,\mathcal A\otimes_{\mathcal B}\mathcal A/\mathcal J^{k+1}$ and ${\bf J}(\mathfrak g)$. 
This is essentially (a dg-version of) Grothendieck's description of differential operators via formal neighborhood of the diagonal map \cite[(16.8.4)]{EGA}. 
\end{exa}

\subsection{The Chevalley-Eilenberg algebra is the formal neighborhood of $X$ into $Y$}\label{subsec-2.3}
Let $\mathcal I:=\ker(\mathcal O_Y\to i_*\mathcal O_X)$ be the ideal sheaf of $X$ into $Y$, $\cO_X^{(k)}:=\cO_Y/\mathcal I^{k+1}$, and $\cO_X^{(\infty)}:=\underleftarrow{\lim} \,\cO_X^{(k)}$. 
We also write $\Omega^*_{\wtd{X}/Y}:=\Omega^*_{\cO_{\wtd{X}}/\cO_Y}$, and $\Omega^{(k)}_{\wtd{X}/Y}$ for its quotient by the 
$(k+1)$-th power of the augmentation ideal. 
Recall from Example \ref{exa:DR} that we have $C^*(T_{\wtd{X}/Y})=\underleftarrow{\lim} \,\Omega^{(k)}_{\wtd{X}/Y}$.
\begin{thm}\label{thm:ce}
There are quasi-isomorphisms of sheaves of dg-algebras on $Y$
$$
\phi^{(k)} : \Omega^{(k)}_{\wtd{X}/Y} \longrightarrow \cO_X^{(k)}\,.
$$
Moreover these maps are compatible with the inverse systems on both sides, and taking inverse limits induces a quasi-isomorphism of algebras
$$
\phi : C^*(T_{\wtd{X}/Y}) \longrightarrow \cO_X^{(\infty)}\,.
$$
\end{thm}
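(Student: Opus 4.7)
The plan is to construct $\phi^{(k)}$ by an explicit formula in a local Koszul model and prove it is a quasi-isomorphism by induction on $k$, using the form-degree filtration on $\Omega^{(k)}_{\wtd{X}/Y}$. Since the statement is local on $Y$, I may work over an affine open where $X$ is cut out by a regular sequence $f_1,\dots,f_c$ (possible because a closed embedding of smooth varieties is a regular embedding). Take $E=\cO_Y^c$ in degree $0$, with odd generators $\epsilon_i$ of $E[1]$ satisfying $Q(\epsilon_i)=f_i$. Then $\Omega^*_{\wtd{X}/Y}=\cO_{\wtd{X}}[d\epsilon_1,\dots,d\epsilon_c]$ (with $d\epsilon_i$ in cohomological degree $0$), and $D(\epsilon_i)=f_i+d\epsilon_i$, $D(d\epsilon_i)=0$. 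Define $\phi^{(k)}$ as the $\cO_Y$-algebra map sending $\epsilon_i\mapsto 0$ and $d\epsilon_i\mapsto -f_i\bmod \mathcal I^{k+1}$. It descends to $\Omega^{(k)}$ because a monomial $d\epsilon_{i_1}\cdots d\epsilon_{i_m}$ of form-degree $m>k$ is sent to an element of $\mathcal I^m\subseteq\mathcal I^{k+1}$, which vanishes; and compatibility with $D$ reduces to $\phi^{(k)}(D\epsilon_i)=f_i-f_i=0$, extended multiplicatively.

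\textbf{Quasi-isomorphism by induction on $k$.}
The case $k=0$ is the resolution quasi-iso $\cO_{\wtd{X}}\raiso i_*\cO_X$. For the inductive step I consider the commutative diagram of short exact sequences of complexes
\[
\begin{CD}
\Omega^{k}_{\wtd{X}/Y} @>>> \Omega^{(k)}_{\wtd{X}/Y} @>>> \Omega^{(k-1)}_{\wtd{X}/Y} \\
@VVV @VV{\phi^{(k)}}V @VV{\phi^{(k-1)}}V \\
i_*(\mathcal I^k/\mathcal I^{k+1}) @>>> \cO_Y/\mathcal I^{k+1} @>>> \cO_Y/\mathcal I^k
\end{CD}
\]
extending by $0$'s on both sides. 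The differential on the top-left subcomplex is just $L_Q$ (the de Rham part vanishes after the truncation quotient), and $\Omega^k_{\wtd{X}/Y}\cong \cO_{\wtd{X}}\otimes_{\cO_Y}\bS^k(E)$ with $\bS^k(E)$ locally free over $\cO_Y$, so its cohomology is $i_*\cO_X\otimes_{\cO_Y}\bS^k(E)\cong i_*\bS^k(N^\chk)\cong i_*(\mathcal I^k/\mathcal I^{k+1})$. Unwinding the identification, the left vertical map realizes this cohomology isomorphism up to sign; the five-lemma then finishes the induction.

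\textbf{Limit and main obstacle.}
For the inverse limit statement, both towers $\{\Omega^{(k)}_{\wtd{X}/Y}\}$ and $\{\cO_X^{(k)}\}$ have surjective transition maps, so Mittag-Leffler holds and cohomology commutes with the limit; thus $\phi=\underleftarrow{\lim}\,\phi^{(k)}$ is a quasi-isomorphism. The main obstacle I expect is that the above explicit formula depends on a choice of local Koszul model and regular sequence. I would handle this either by checking directly that two such choices yield the same map on $\D(Y)$ (using that any two smooth resolutions of $i$ are linked by a quasi-isomorphism of dg-schemes intertwining the maps up to chain homotopy), or by giving a coordinate-free description of $\phi^{(k)}$ in the spirit of Kapranov's formal exponential, which would make naturality and gluing manifest.
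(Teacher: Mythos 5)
Your inductive skeleton is the same as the paper's: the short exact sequence comparing form-degree $\le k$ with form-degree $\le k+1$, the identification of the kernel's cohomology with $i_*\bS^{k+1}(N^\chk)\cong i_*(\cI^{k+1}/\cI^{k+2})$, the five lemma, and Mittag--Leffler for the limit all appear in the paper's argument essentially verbatim. The base case $k=0$ is also the same.

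The genuine gap is the one you flag yourself and then defer: you only construct $\phi^{(k)}$ after replacing the given resolution by a local Koszul model. This is a real problem for two reasons. First, the theorem is a statement about a \emph{fixed} global smooth resolution $\wtd{X}=(Y,\bS_{\cO_Y}(E[1]),Q)$ in which $E$ is in general spread over many negative degrees, so your local complex is not even locally the complex $\Omega^{(k)}_{\wtd{X}/Y}$ appearing in the statement; transferring the result requires knowing that a quasi-isomorphism of smooth resolutions induces quasi-isomorphisms on all the truncated de Rham algebras $\Omega^{(k)}$ compatibly with the maps to $\cO_X^{(k)}$, which is strictly more than the invariance of $\Omega^1$ (\cite[Prop.~2.7.7]{CK}) and is not established. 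Second, even granting a global Koszul-type resolution, your formula $d\epsilon_i\mapsto -f_i$ depends on the chosen regular sequence and the two gluing strategies you sketch are not carried out. The paper closes exactly this gap by writing your formula intrinsically: $\phi$ is defined on $\Omega^1_{\wtd{X}_\sharp/Y}[-1]$ as $\epsilon\circ(-\iota_Q)$, where $\epsilon:\bS_{\cO_Y}(E[1])\to\cO_Y$ is the augmentation, and is then extended as a map of $\cO_{\wtd{X}}^\sharp$-algebras; in your local model this recovers $d\epsilon_i\mapsto -\epsilon(Q(\epsilon_i))=-f_i$, but the definition uses no choices, lands in $\cI$ because $\epsilon(Q(e))$ dies in $i_*\cO_X$, and is a chain map by the super Cartan formula $[d_{DR},\iota_Q]=L_Q$ together with $\iota_Q d_{DR}\iota_Q d_{DR}=Q^2=0$. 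This is precisely the ``coordinate-free description'' you hoped for, and with it your induction goes through for the general $E$ (where the kernel term carries a twisted differential and the identification of its cohomology is the projection-formula step of the paper, rather than the untwisted tensor product you get in the Koszul case).
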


\begin{proof}
We define an $\cO_{\wtd{X}}^\sharp$-linear map $\phi:\Omega^1_{\wtd{X}_\sharp/Y}[-1] \longrightarrow \cO_Y$ by the composition
$$
\Omega^1_{\wtd{X}_\sharp/Y}[-1]\overset{-\iota_Q}{\longrightarrow}\cO_{\wtd{X}}^\sharp
= \bS_{\cO_Y}\big(E[1]\big)\overset{\epsilon}{\twoheadrightarrow}\cO_Y\,,
$$
where the last map is just the canonical $\cO_Y$-augmentation. 

\begin{lem}\label{lem:ideal}
The map $\phi$ constructed above has image inside the ideal sheaf $\cI$. 
\end{lem}
\begin{proof}
Locally on $Y$ the space of one-forms is generated by elements of the form $a d_{DR}(e)$ for local sections $a\in\cO_{\wtd{X}}$ and $e\in E$. 
Then we have
$$
\phi\big(ad_{DR}(e)\big)= \epsilon\big(-\iota_Q \big(a d_{DR}(e)\big)\big)=- \epsilon\big(a Q(e)\big) = -\epsilon(a)\epsilon\big(Q(e)\big)\,.
$$
The composed map $\cO_{\wtd{X}}\overset{\epsilon}{\to}\cO_Y\to i_*\cO_X$ being a quasi-isomorphism, we have in particular that the image of $Q(e)$ through it is zero. Therefore $\epsilon\big(Q(e)\big)\in\cI$. 
\end{proof}

By the universal property of symmetric algebras, the map $\phi$ induces a morphism of $\cO_{\wtd{X}}^\sharp$-algebras 
$$
\Omega^*_{\wtd{X}_\sharp/Y}=\bS_{\cO_{\wtd{X}}^\sharp}\Big(\Omega^1_{\wtd{X}_\sharp/Y}[-1]\Big)\longrightarrow\cO_Y\,.
$$
We shall still denote this map by $\phi$. 
\begin{lem}
$\phi$ is a morphism of $\cO_{\wtd{X}}$-algebras. In other words, it is a cochain map. 
\end{lem}

\begin{proof}
Locally on $Y$, a $k$-form $\alpha$ can be written as $fd_{DR}(e_1)\cdots d_{DR}(e_k)$ for local sections $f\in\cO_{\wtd{X}}$ and $e_i\in E$, $1\leq i\leq k$. 
We want to show that $\phi\big(D(\alpha)\big)=0$. This is a direct computation, we have
\begin{align*}
\phi&\big(D(\alpha)\big)= \phi \Big(L_Q\big(fd_{DR}(e_1)\cdots d_{DR}(e_k)\big)+d_{DR}\big(fd_{DR}(e_1)\cdots d_{DR} (e_i)\big)\Big)\\
&=\phi\Big(Q(f)d_{DR}(e_1)\cdots d_{DR}(e_k) 
+\sum_{1\leq i\leq k} (-1)^{|f|+|e_1|+\cdots +|e_{i-1}|+(i-1)}fd_{DR}(e_1)\cdots L_Q\big(d_{DR}(e_i)\big)\cdots d_{DR}(e_k)\\
&+ d_{DR}(f)d_{DR}(e_1)\cdots d_{DR}(e_k)\Big)\\
&=\epsilon\Big((-1)^k Q(f)Q(e_1)\cdots Q(e_k)+(-1)^{k+|f|+|e_1|+\cdots +|e_{i-1}|+(i-1)} \sum_{1\leq i\leq k} fQ(e_1)\cdots\big(\iota_QL_Q d_{DR}(e_i)\big) \cdots Q(e_k)\\
&+(-1)^{k+1}Q(f)Q(e_1)\cdots Q(e_k)\Big).
\end{align*}
The first and the last terms in the above sum cancel each other. 
For the middle term observe that $\iota_QL_Qd_{DR} = \iota_Q d_{DR} \iota_Q d_{DR} = QQ=0$.
\end{proof}

By Lemma~\ref{lem:ideal} the map $\phi$ sends the augmentation ideal (which is generated by $\Omega^1_{\wtd{X}/Y}[-1]$) into the ideal $\cI$. Hence $\phi$ induces an inverse system of $\cO_{\wtd{X}}$-algebra morphisms 
$$
\phi^{(k)} : \Omega_{\wtd{X}/Y}^{(k)} \longrightarrow \cO_X^{(k)}\,.
$$
We have to show that the maps $\phi^{(k)}$ defined as above are quasi-isomorphisms. 
We proceed by induction. 
For the case $k=0$ this reduces to the fact that $\cO_{\wtd{X}}$ is a resolution of $i_*\cO_X= \cO_Y/\cI$.
Assuming $\phi^{(k)}$ is a quasi-isomorphism we would like to show it is also the case for $\phi^{(k+1)}$. 
For this we consider the following morphism of short exact sequences
\[\begin{CD}
0 @>>> \bS_{\cO_Y}(E[1])\otimes_{\cO_Y}\bS^{k+1}_{\cO_Y}(E) @>>> \Omega^{(k+1)}_{\wtd{X}/Y} @>>> \Omega^{(k)}_{\wtd{X}/Y} @>>> 0 \\
@. @VVV        @V\phi^{(k+1)} VV   @V\phi^{(k)} VV  @.\\
0 @>>> i_*\Big(\bS^{k+1}_{\cO_Y}(N^\chk)\Big) @>>> \cO_X^{(k+1)} @>>> \cO_X^{(k)} @>>> 0.
\end{CD}\]
The leftmost vertical map is a quasi-isomorphism by the projection formula, and $\phi^{(k)}$ is so by the induction hypothesis. 
Thus $\phi^{(k+1)}$ is also a quasi-isomorphism. 
The Theorem is proved.
\end{proof}

\begin{rem}
This result means that the dg-Lie algebroid structure on $T_{\wtd{X}/Y}$ encodes all the formal neighborhood of $X$ into $Y$. 
Moreover our proof of the theorem works for a general closed embedding of quasi-projective dg-schemes. 
For a general morphism $X\to Y$ of quasi-projective dg-schemes this gives a \emph{definition} of the ``derived" formal neighborhood of $X$ in $Y$: take the Chevalley-Eilenberg algebra of $T_{\wtd{X}/Y}$ for a smooth factorization of $X\ra Y$ through $\wtd{X}$.
\end{rem}

\begin{exa}[The case of a global complete intersection]
As an example we now write down an explicit free resolution for the formal neighborhood algebra of an embedding $i:X\hookrightarrow Y$ in the case when $X$ is a global complete intersection in $Y$. 
It turns out that our construction recovers the classical Eagon-Northcott resolution of formal neighborhood of $X$ in $Y$ (see~\cite{BE}).

To fix the notation we assume that $X$ is defined as the zero locus of a section $s$ of a vector bundle $E^\chk$ on $Y$. 
Then the sheaf $i_*\cO_X$ has a locally free resolution of the form
$$
(\bS_{\cO_Y}(E[1]), \lrcorner s)=\cdots\overset{\lrcorner s}{\longrightarrow} \wedge^2 E \overset{\lrcorner s}{\longrightarrow} E 
\overset{\lrcorner s}{\longrightarrow} \cO_Y\longrightarrow 0\cdots\,.
$$
Note that this is a resolution of $i_*\cO_X$ as an $\cO_Y$-algebra. 
Thus we take $\wtd{X}$ to be the dg-scheme defined by $(\bS_{\cO_Y}(E[1]),\lrcorner s)$ on $Y$. 
Let us write down the Chevalley-Eilenberg complex of $T_{\wtd{X}/Y}$. 
This is by definition the completed De Rham complex of $(\bS_{\cO_Y}(E[1]),\lrcorner s)$ which is given by
$\bS_{\cO_Y}(E[1])\otimes_{\cO_Y} \widehat{\bS}_{\cO_Y}(E)$ as a quasi-coherent sheaf on $Y$, with differential acting on the component $\bS_{\cO_Y}^i(E[1])\otimes_{\cO_Y}\bS_{\cO_Y}^j(E)$ by the sum of the following two operators:
\begin{align*}
L_{\lrcorner s}\big((e_1\cdots e_i)\otimes (f_1\cdots f_j)\big) &
= \sum_{l=1}^i (-1)^{l-1}\Big(\big(e_1\cdots (\langle s,e_l\rangle) \cdots e_i\big)\otimes (f_1\cdots f_j)\Big);\\
d_{DR} \big(( e_1\cdots e_i)\otimes (f_1\cdots f_j)\big) &
= \sum_{l=1}^i (-1)^{l-1}\big((e_1\cdots \widehat{e_l} \cdots e_i)\otimes (e_lf_1\cdots f_j)\big).
\end{align*}
\end{exa}

\subsection{The jet algebra represents $i^*i_*$}\label{sec-2.4}

Our goal in this subsection is to understand the jet algebra ${\bf J}(T_{\wtd{X}/Y})$ of the dg-Lie algebroid $T_{\wtd{X}/Y}$. 
Our main result identifies ${\bf J}(T_{\wtd{X}/Y})$ with the formal neighborhood of $\wtd{X}$ inside $\wtd{X}\times_Y \wtd{X}$. 

\medskip

Let us begin with a description of the derived self-intersection $\wtd{X}\times_Y\wtd{X}$: 
by the definition of fiber product of dg-schemes its underlying scheme is $\wtd{X}^0\times_Y \wtd{X}^0=Y$ (recall that $\wtd{X}^0=Y$) and its structure sheaf is $\cO_{\wtd{X}}\otimes_{\cO_Y} \cO_{\wtd{X}}$. 
The diagonal map $\Delta_{\wtd{X}/Y}: \wtd{X} \ra \wtd{X}\times_Y \wtd{X}$ is defined by the identity map on the underlying scheme $Y$ and the multiplication map $\cO_{\wtd{X}}\otimes_{\cO_Y}\cO_{\wtd{X}}\ra \cO_{\wtd{X}}$ on the structure sheaves. 
We denote its kernel by $\mathcal J$, and we define $\cO^{(k)}_{\Delta}:=\cO_{\wtd{X}\times_Y\wtd{X}}/\mathcal J^{k+1}$ ($k\geq0$) and $\cO^{(\infty)}_{\Delta}:=\underleftarrow{\lim} \,\cO_\Delta^{(k)}$. 
According to Example \ref{exa:jets} we have the following description of ${\bf J}(T_{\wtd{X}/Y})$. 
\begin{prop}\label{thm:ug}
There is an isomorphism $\cO^{(\infty)}_{\Delta} \cong {\bf J}(T_{\wtd{X}/Y})$ of $\mathcal O_{\wtd{X}\times_Y\wtd{X}}$-algebras. 
\end{prop}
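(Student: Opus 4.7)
My plan is to deduce the proposition directly from Example \ref{exa:jets}, essentially as a specialization of the general statement there to the inclusion of graded algebras $\cO_Y \hookrightarrow \cO_{\wtd{X}}^\sharp = \bS_{\cO_Y}(E[1])$. The key observation is that the pair $(\cO_{\wtd{X}},T_{\wtd{X}/Y})$ is exactly of the form $(\mathcal A,\Der_{\mathcal B}(\mathcal A))$ with $\mathcal B=\cO_Y$ and $\mathcal A=\cO_{\wtd{X}}$, so Example~\ref{exa:jets} immediately applies once the smoothness hypothesis is verified.

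First I would verify that the underlying graded algebra morphism $\cO_Y \to \cO_{\wtd{X}}^\sharp$ is smooth. This is immediate from the construction: $\cO_{\wtd{X}}^\sharp = \bS_{\cO_Y}(E[1])$ is a free graded-commutative algebra on the locally free graded $\cO_Y$-module $E[1]$, and free constructions are smooth. Therefore the canonical morphisms of Example~\ref{exa:jets} induce isomorphisms
$$\cO_{\wtd{X}}\otimes_{\cO_Y}\cO_{\wtd{X}}/\mathcal J^{k+1} \;\stackrel{\sim}{\longrightarrow}\; {\bf J}^{(k)}(T_{\wtd{X}/Y})$$
of $\cO_{\wtd{X}}$-algebras for every $k\geq 0$, where $\mathcal J$ denotes the kernel of the multiplication map $\cO_{\wtd{X}}\otimes_{\cO_Y}\cO_{\wtd{X}}\to\cO_{\wtd{X}}$.

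Next I would take the inverse limit over $k$. By definition $\cO^{(\infty)}_{\Delta} = \underleftarrow{\lim}\,\cO_{\wtd{X}\times_Y\wtd{X}}/\mathcal J^{k+1}$ on the one hand, and ${\bf J}(T_{\wtd{X}/Y}) = \underleftarrow{\lim}\,{\bf J}^{(k)}(T_{\wtd{X}/Y})$ on the other (recall from Paragraph~\ref{sec:2.2.3} that the $\cO_{\wtd{X}}$-coring filtration on $\U(T_{\wtd{X}/Y})$ by order gives, upon left $\cO_{\wtd{X}}$-dualizing, exactly this tower). Passing to the limit in the above compatible system of algebra isomorphisms then yields the desired isomorphism
$$\cO^{(\infty)}_{\Delta}\;\stackrel{\sim}{\longrightarrow}\;{\bf J}(T_{\wtd{X}/Y}).$$
Finally I would check that this isomorphism respects not merely the left $\cO_{\wtd{X}}$-algebra structure but the full $\cO_{\wtd{X}\times_Y\wtd{X}}$-algebra structure: the two dg-algebra morphisms $\cO_{\wtd{X}}\to{\bf J}(T_{\wtd{X}/Y})$ described in Paragraph~\ref{sec:2.2.3} as $a\mapsto\bigl(P\mapsto aP(1)\bigr)$ and $a\mapsto\bigl(P\mapsto P(a)\bigr)$ correspond under the isomorphism to the two factor maps $\cO_{\wtd{X}}\to\cO_{\wtd{X}}\hat\otimes_{\cO_Y}\cO_{\wtd{X}}$, which is the content of the formula $a\otimes a'\mapsto\bigl(P\mapsto aP(a')\bigr)$ from Example~\ref{exa:jets}.

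The only real point requiring care is the verification of smoothness in the graded-commutative setting and the passage to the completion being well-behaved, but both are straightforward: $E[1]$ is locally free of finite rank in each graded component, so $\bS_{\cO_Y}(E[1])$ has cotangent module $\cO_{\wtd{X}}^\sharp\otimes_{\cO_Y}E[1]$ which is locally free of finite rank in each degree, precisely the smoothness condition needed to invoke Grothendieck's description cited in Example~\ref{exa:jets}. I do not anticipate a substantial obstacle; the proposition is essentially a dictionary translation between the intrinsic Lie algebroid construction and the geometric formal neighborhood.
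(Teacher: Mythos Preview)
Your proposal is correct and follows exactly the approach the paper takes: the paper simply states the proposition as a consequence of Example~\ref{exa:jets} (``According to Example~\ref{exa:jets} we have the following description\dots''), and you have faithfully spelled out the verification that the smoothness hypothesis there is met because $\cO_{\wtd{X}}^\sharp=\bS_{\cO_Y}(E[1])$ is free over $\cO_Y$. Your additional remarks on the inverse limit and the identification of the two $\cO_{\wtd{X}}$-algebra maps with the factor inclusions are accurate elaborations of what Example~\ref{exa:jets} already encodes.
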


This result in fact holds for any morphism of dg-schemes $X\to Y$. 
The specificity of a closed embedding $i:X\hookrightarrow Y$ of varieties is that the formal neighborhood of the diagonal in $\wtd{X}\times_Y\wtd{X}$ is quasi-equivalent to $\wtd{X}\times_Y\wtd{X}$ itself, as we prove now. 

\begin{thm}\label{thm:complete}
For a closed embedding $i:X\hookrightarrow Y$ of smooth schemes assume that $Y$ is quasi-projective. 
Then the formal neighborhood algebra $\cO^{(\infty)}_\Delta$ of the embedding $\wtd{X}\hookrightarrow \wtd{X}\times_Y \wtd{X}$ is quasi-isomorphic to the structure sheaf $\cO_{\wtd{X}\times_Y\wtd{X}}$.
\end{thm}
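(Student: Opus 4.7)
The plan is to show that the canonical map $\cO_{\wtd{X}\times_Y\wtd{X}}\longrightarrow\cO^{(\infty)}_\Delta$ is not just a quasi-isomorphism but in fact a strict \emph{isomorphism} of sheaves of dg-algebras, by tracking the cohomological grading on both sides. This avoids any delicate spectral-sequence or homotopy-limit argument and really only uses the specific shape of the smooth resolution $\wtd{X}$ introduced earlier.

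First, I would unwind the explicit form of $\wtd{X}$. By construction $\cO_{\wtd{X}}=\bS_{\cO_Y}(E[1])$ with $E=E_0\oplus E_{-1}\oplus\cdots$ non-positively graded, so (with the paper's convention $(E[1])^l=E^{l+1}$) the shift $E[1]$ is concentrated in strictly negative cohomological degrees. Consequently the augmentation ideal $\cO_{\wtd X}^{+}:=\bigoplus_{n\geq 1}\bS^n_{\cO_Y}(E[1])$ lies in degrees $\leq -1$, the degree-zero piece of $\cO_{\wtd X}$ is just $\cO_Y$, and the same conclusions transfer to $\cO_{\wtd{X}\times_Y\wtd{X}}=\cO_{\wtd{X}}\otimes_{\cO_Y}\cO_{\wtd{X}}$.

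Next, I would analyze $\mathcal J$. Since the multiplication map $\cO_{\wtd{X}}\otimes_{\cO_Y}\cO_{\wtd X}\to\cO_{\wtd X}$ is a morphism of (dg-)$\cO_Y$-algebras, its kernel $\mathcal J$ is generated as a (dg-)ideal by the elements $b\otimes 1-1\otimes b$ for $b$ running over $\cO_{\wtd X}$; and since $b\otimes 1=1\otimes b$ whenever $b\in\cO_Y$, it is enough to let $b$ run over $\cO_{\wtd X}^{+}$. All such generators are therefore of strictly negative cohomological degree, so $\mathcal J$ itself lives in degrees $\leq -1$, and an immediate induction on $k$ (additivity of cohomological degree under the graded-commutative product) gives $\mathcal J^{k+1}\subseteq\bigl(\cO_{\wtd X\times_Y\wtd X}\bigr)^{\leq -(k+1)}$.

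Finally, fix any cohomological degree $n$. Then $(\mathcal J^{k+1})^n=0$ as soon as $k\geq -n$, so the projection $\cO_{\wtd X\times_Y\wtd X}\twoheadrightarrow\cO_\Delta^{(k)}$ is an isomorphism in degree $n$ for all sufficiently large $k$. The inverse system $\{\cO_\Delta^{(k)}\}_k$ is therefore eventually constant in every fixed degree, and passing to the inverse limit produces an isomorphism $\cO_{\wtd X\times_Y\wtd X}\stackrel{\sim}{\longrightarrow}\cO_\Delta^{(\infty)}$ of sheaves of dg-algebras, which is \emph{a fortiori} the desired quasi-isomorphism. No serious obstacle is expected: the only point worth double-checking is that $\mathcal J$ is really generated in strictly negative degrees, and that in turn reduces to the single fact that $\cO_{\wtd X}$ and $\cO_Y$ coincide in cohomological degree zero.
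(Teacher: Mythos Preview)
Your proof is correct and is in fact cleaner than the paper's own argument. Both proofs start from the same observation---that the diagonal ideal $\mathcal J$ lives in strictly negative cohomological degrees---but then diverge. The paper invokes the finiteness of $\underline{\Tor}^i_{\cO_Y}(\cO_X,\cO_X)$ (i.e.\ smoothness of $Y$) to argue that the quotient maps $\cO_{\wtd X\times_Y\wtd X}\to\cO_\Delta^{(k)}$ are \emph{quasi}-isomorphisms for $k\gg0$, and then has to remark separately that the inverse limit behaves well because the cohomologies of the system stabilize. You instead observe directly that $\mathcal J^{k+1}\subset(\cO_{\wtd X\times_Y\wtd X})^{\leq -(k+1)}$, so the tower is eventually constant in each fixed degree and the canonical map is a strict isomorphism of graded sheaves, hence of dg-algebras.

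What your approach buys: it is more elementary, it yields a genuine isomorphism rather than a quasi-isomorphism, it makes the inverse-limit step trivial, and---most notably---it does not actually use the smoothness of $Y$ (only the specific shape $\cO_{\wtd X}=\bS_{\cO_Y}(E[1])$ with $E[1]$ in strictly negative degrees, which is guaranteed by the quasi-projectivity hypothesis via \cite{CK}). What the paper's approach buys is essentially nothing extra here; the Tor-vanishing argument is arguably a detour. One minor stylistic point: your justification that $\mathcal J$ sits in degrees $\leq -1$ via the generators $b\otimes 1-1\otimes b$ is fine, but you can also see it in one line by noting that both $\cO_{\wtd X}\otimes_{\cO_Y}\cO_{\wtd X}$ and $\cO_{\wtd X}$ have degree-zero piece equal to $\cO_Y$ and the multiplication map is the identity there.
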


\noindent In paticular, combined with Proposition \ref{thm:ug}, we conclude that ${\bf J}(T_{\wtd{X}/Y})$ is the structure sheaf of $\wtd{X}\times_Y \wtd{X}$. 

\begin{proof}
There is a natural map of sheaves of algebras $\cO_{\wtd{X}\times_Y \wtd{X}}\ra \cO^{(\infty)}_\Delta$ induced from the natural quotient maps $\cO_{\wtd{X}\times_Y\wtd{X}}\ra \cO^{(k)}_\Delta$. Moreover as the derived tensor product the cohomologies of $\cO_{\wtd{X}\times_Y\wtd{X}}$ are given by $\underline{\Tor}^i_Y(\cO_X,\cO_X)$ where the cohomological grading is compatible with the filtration degree on $\cO_{\wtd{X}\times_Y\wtd{X}}$. Since $\underline{\Tor}^i_Y(\cO_X,\cO_X)$ vanishes for large $i$ for a smooth space $Y$ and the dg-ideal $\mathcal J$ of the diagonal map $\cO_{\wtd{X}\times_Y\wtd{X}}\ra\cO_{\wtd{X}}$ is concentrated on strictly negative degrees, the natural quotient maps $\cO_{\wtd{X}\times_Y\wtd{X}}\ra \cO^{(k)}_\Delta$ are quasi-isomorphisms for large $k$. The theorem follows after taking inverse limit. (Note that taking inverse limit does not in general commute with taking cohomology. However in our case this is true since the cohomologies of the inverse system stabilize.)
\end{proof}
\begin{proof}[Proof of Theorem \ref{thm:intro2}]
We should prove that, for a closed embedding $i:X\hookrightarrow Y$, ${\bf J}(T_{\wtd{X}/Y})$ represents the functor $i^*i_*$. 
This has to be interpreted correctly. Namely we have factored the map $i$ as $\pi \circ j$ for a quasi-equivalence $j: X\rightarrow \wtd{X}$ 
and a smooth map $\pi:\wtd{X}\ra Y$. Since the map $\pi$ is smooth, the sheaf $h_*\cO_{\wtd{X}\times_Y \wtd{X}}$ where $h$ is the embedding 
$\wtd{X}\times_Y \wtd{X}\hookrightarrow \wtd{X}\times\wtd{X}$, gives the kernel for $\pi^*\pi_*: \D(\wtd{X})\ra \D(\wtd{X})$. 
Thanks to Proposition \ref{thm:ug} and Theorem \ref{thm:complete} we have a quasi-isomorphism
$$
\cO_{\wtd{X}\times_Y \wtd{X}} \cong {\bf J}(T_{\wtd{X}/Y})\,.
$$ 
Thus we conclude that $h_*{\bf J}(T_{\wtd{X}/Y})\in \D(\wtd{X}\times\wtd{X})$ is the kernel representing the functor $\pi^*\pi_*$. Finally we observe that the functor $\pi^*\pi_*$ corresponds to $i^*i_*$ via the equivalence $\D(X)\cong \D(\wtd{X})$. Hence the sheaf $(j\times j)^*h_*{\bf J}(T_{\wtd{X}/Y})$ on $X\times X$ represents $i^*i_*$. 

As for the monad structure on $\pi^*\pi_*$, it {\bf is} the algebra structure on $\cO_{\wtd{X}\times_Y \wtd{X}}$. 
\end{proof}
\begin{rem}
It is interesting to observe that there is an iterative process arising from an embedding of algebraic varieties $i:X\hookrightarrow Y$. 
Namely we have associated to the embedding a natural dg-Lie algebroid $T_i$ (here we changed the notation for $T_{\wtd{X}/Y}$ to emphasize the morphism $i$). 
But the morphism $i$ defines another embedding of dg-schemes $\Delta_i:\wtd{X} \ra \wtd{X}\times_Y \wtd{X}$. The dg-Lie algebroid associated to this latter embedding $\Delta_i$ will be denoted by $T_{\Delta_i}$. There is an interesting relationship between $T_i$ and $T_{\Delta_i}$ in view of Theorem~\ref{thm:ce} and Proposition~\ref{thm:ug}:
\[ C^*(T_{\Delta_i})\stackrel{\mbox{\ref{thm:ce}}}{\longrightarrow} \cO_{\wtd{X}/\wtd{X}\times_Y\wtd{X}}^{(\infty)}
\stackrel{\mbox{\ref{thm:ug}}}{\longrightarrow} {\bf J}(T_i).\]
In particular we see that the Lie algebroid $T_{\Delta_i}$ encodes less information than that of $T_i$ since in the above isomorphism we do not see the Lie structure on $T_i$ from that of $T_{\Delta_i}$. Moreover observe that $T_{\Delta_i}\cong T_i[-1]$, hence its Chevalley-Eilenberg algebra is of the form $\widehat{\bS}(T_i^\chk)$ endowed with the Chevalley-Eilenberg differential. 
The isomorphism $C^*(T_{\Delta_i})\cong {\bf J}(T_i)$ then certainly resembles the $\PBW$ theorem for $T_i$. 
It is thus natural from this isomorphism to conjecture that the $\PBW$ property holds for $T_i$ if-and-only-if certain Lie structure vanishes on 
$T_{\Delta_i}$. This is in fact the main point of the paper~\cite{AC} where the authors study the $\HKR$ property for an embedding $i:X\ra Y$. 
Their result on the derived level asserts that the vanishing of the Lie structure on $N[-2]=T_{\Delta_i}$ is equivalent to the $\HKR$ property 
for $N[-1]=T_i$.
\end{rem}

\subsection{The universal enveloping algebra represents $i^*i_\dagger $}\label{sec-2.5}

First we recall that the functor $i_\dagger$ is the left adjoint of 
\[ i^*: {\bf D}^b_{coh}(Y) \ra {\bf D}^b_{coh}(X).\]
Explicitly for an object $E\in {\bf D}^b_{coh}(X)$, we have
\[ i_\dagger(E)= i_*(E\otimes \det N) [\dim X- \dim Y].\]
Let us sketch a proof of Theorem \ref{thm:intro}. 
By the universal property of $\U(T_{\widetilde{X}/Y})$ we get a morphism 
$\U(T_{\widetilde{X}/Y})\longrightarrow\mathcal{H}\mathrm{om}_{\mathcal O_Y}(\mathcal O_{\widetilde{X}},\mathcal O_{\widetilde{X}})$ 
of dg-algebras in $\mathcal O_{\widetilde{X}}$-bimodules over $\mathcal O_Y$. One can prove that, in the case of a closed embedding, 
it is a quasi-isomorphism (the proof is very similar to the one of Theorem \ref{thm:complete}, with $\underline{\Tor}$'s being replaced 
by $\underline{{\rm Ext}}$'s). In particular we have an algebra isomorphism 
$\U(i_*N[-1])\to\mathcal E\mathrm{xt}_{\mathcal O_Y}(i_*\mathcal O_X,i_*\mathcal O_X)$ in ${\bf D}^b_{coh}(Y)$, and thus 
by seeing $\U(i_*N[-1])$ as an object of ${\bf D}^b_{coh}(X\times X)$ set-theoretically supported on the diagonal, 
we get that it is the kernel representing the functor $i^*i_\dagger :{\bf D}^b_{coh}(X)\to {\bf D}^b_{coh}(X)$. 
The monad structure on $i^*i_\dagger $ coming from the product on $\mathcal E\mathrm{xt}_{\mathcal O_Y}(i_*\mathcal O_X,i_*\mathcal O_X)$ 
easily identifies with the one coming from the projection formula 
$i^*i_\dagger i^*i_\dagger \Rightarrow i^*i_\dagger $. \hfill\qed

\medskip

Let us provide yet another approach. Borrowing the notation from Paragraph~\ref{sec:2.2.2}, we have the following
\begin{lem}
The functor 
$\U(\mathfrak g)\underset{\mathcal A}{\otimes}-:\mathcal A\textrm{-}mod\longrightarrow\mathcal A\textrm{-}mod$ is left adjoint 
to $\mathsf{Hom}_{\mathcal A\textrm{-}mod}\big(\U(\mathfrak g),-)$. Here we use that for any left $\mathcal A$-module $M$, the right 
$\mathcal A$-module structure on $\U(\mathfrak g)$ turns $\mathsf{Hom}_{\mathcal A\textrm{-}mod}\big(\U(\mathfrak g),M\big)$ 
into a left $\mathcal A$-module. 
\end{lem}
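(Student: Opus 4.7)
The plan is to recognize this as an instance of the general tensor--Hom adjunction for an $\mathcal{A}$-bimodule, applied to $\U(\mathfrak{g})$. From Paragraph~\ref{sec:2.2.2} we know that $\U(\mathfrak{g})$ carries an $\mathcal{A}$-bimodule structure compatible with its algebra structure. Write $\cdot$ for the left action and $\cdot_r$ for the right action. The tensor product $\U(\mathfrak{g})\otimes_{\mathcal{A}}M$ is formed using $\cdot_r$ and inherits a left $\mathcal{A}$-module structure via $\cdot$; the Hom space $\mathsf{Hom}_{\mathcal{A}\textrm{-}mod}(\U(\mathfrak{g}),M)$ is formed using $\cdot$ and, as stated in the lemma, is turned into a left $\mathcal{A}$-module via the formula $(a\cdot f)(u):=f(u\cdot_r a)$.

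First I would write down the explicit candidate bijection on Hom-sets. Given a left $\mathcal{A}$-linear map $\phi:\U(\mathfrak{g})\otimes_{\mathcal{A}}M\to N$, define
$$
\tilde{\phi}:M\longrightarrow\mathsf{Hom}_{\mathcal{A}\textrm{-}mod}(\U(\mathfrak{g}),N),\qquad \tilde{\phi}(m)(u):=\phi(u\otimes m),
$$
and conversely, given $\psi$ on the right-hand side, put $\hat{\psi}(u\otimes m):=\psi(m)(u)$ (and extend $\mathcal{A}$-bilinearly). These two assignments are formally mutual inverses as soon as the targets are well-defined.

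Second I would verify the required module-linearity. That $\tilde{\phi}(m)$ is left $\mathcal{A}$-linear is immediate from left $\mathcal{A}$-linearity of $\phi$ together with the definition of the left action on $\U(\mathfrak{g})\otimes_{\mathcal{A}}M$. That $\tilde{\phi}$ itself is left $\mathcal{A}$-linear is exactly where the convention for the left $\mathcal{A}$-module structure on $\mathsf{Hom}_{\mathcal{A}\textrm{-}mod}(\U(\mathfrak{g}),N)$ is used, via the calculation
$$
\tilde{\phi}(a\cdot m)(u)=\phi(u\otimes a\cdot m)=\phi(u\cdot_r a\otimes m)=\tilde{\phi}(m)(u\cdot_r a)=(a\cdot \tilde{\phi}(m))(u).
$$
Symmetrically, one checks that $\hat{\psi}$ descends to the tensor product over $\mathcal{A}$ and is left $\mathcal{A}$-linear.

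Finally, naturality in both $M$ and $N$ is visible from the formulas. I do not expect any genuine obstacle: the statement is a bimodule tensor--Hom adjunction packaged so as to fix conventions for the sequel, and the only point requiring care is to keep straight which of the two $\mathcal{A}$-actions on $\U(\mathfrak{g})$ is used to form the tensor product and which is used to endow the Hom with its left $\mathcal{A}$-module structure.
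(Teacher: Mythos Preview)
Your proposal is correct and takes essentially the same approach as the paper: both write down the standard tensor--Hom bijection $\phi(P\otimes m)=\psi(m)(P)$ for the $\mathcal A$-bimodule $\U(\mathfrak g)$ and check well-definedness using the right action on $\U(\mathfrak g)$. If anything, you spell out more of the linearity verifications than the paper does, which simply records the correspondence formula and leaves the checks implicit.
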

\begin{proof}
Let $M,N$ be $\mathcal A$-modules. As usual the one-to-one correspondence
$$
\phi\in\mathsf{Hom}_{\mathcal A\textrm{-}mod}\left(\U(\mathfrak g)\underset{\mathcal A}{\otimes}M,N\right)\tilde\longleftrightarrow
\,\mathsf{Hom}_{\mathcal A\textrm{-}mod}\Big(M,\mathsf{Hom}_{\mathcal A\textrm{-}mod}\big(\U(\mathfrak g),N\big)\Big)\ni\psi
$$
is given by $\phi(P\otimes m)=\psi(m)(P)$. 
\end{proof}
Then notice that $\mathsf{Hom}_{\mathcal A\textrm{-}mod}\big(\U(\mathfrak g),-)={\bf J}(\mathfrak{g})\underset{\mathcal A}{\widehat\otimes}-$, 
where the $\mathcal A$-bimodule structure on ${\bf J}(\mathfrak{g})$ is the one described in Paragraph~\ref{sec:2.2.3}. 

\medskip

We finally consider the case when $\mathcal A=\mathcal O_{\widetilde{X}}$ and $\mathfrak{g}=T_{\wtd{X}/Y}$. 
It follows from Theorem \ref{thm:intro2} (which we proved in the previous Paragraph) that $\U(T_{\wtd{X}/Y})$ represents 
the kernel for the left adjoint functor to $i^*i_*$, which is $i^*i_\dagger $. 
This has to be understood as $(j\times j)^*h_*\U(T_{\wtd{X}/Y})$, which is nothing but $\U(i_*N[-1])$ viewed as an object of 
${\bf D}^b_{coh}(X\times X)$, being the kernel representing $i^*i_\dagger $. 

\section{Monads}\label{sec:mon}

In the previous Section we have identified $\U(T_{\widetilde{X}/Y})$ with the kernel of $i^*i_\dagger $. 
It happens that both are equipped with an associative product (they even both carry a Hopf-like structure). 
Identifying these additional structures is the subject of the present Section. 

\subsection{The Hopf monad associated with the universal enveloping algebra}

Let $(\mathcal A,\mathfrak g)$ be a Lie algebroid. 
We have seen that $\U(\mathfrak g)$ is a bialgebroid. 
It actually has a very specific feature: source and target maps $\mathcal A\to\U(\mathfrak g)$ are the same. 
Therefore, the forgetful functor ${\it U}:\U(\mathfrak g)\textrm{-}mod\longrightarrow \mathcal A\textrm{-}mod$ is {\it strong} monoidal 
(recall that $\U(\mathfrak g)$ being a bialgebroid its category of left modules is monoidal, see e.g.~\cite{BM} and references 
therein)\footnote{The forgetful functor usually goes to $\mathcal A$-bimodules, but here its essential image is the monoidal subcategory consisting 
of those bimodules which have the same underlying left and right module structure. It is isomorphic to the monoidal category of 
$\mathcal A$-modules. }. 

Observe that ${\it U}$ has a left adjoint: 
${\it F}:\U(\mathfrak g)\underset{\mathcal A}{\otimes}-:\mathcal A\textrm{-}mod\longrightarrow\U(\mathfrak g)\textrm{-}mod$. 
Moreover, ${\it U}$ being strong monoidal, then its left adjoint ${\it F}$ is {\it colax} monoidal and hence 
the monad ${\it T}:={\it U}{\it F}$ is a {\it Hopf monad} in the sense of \cite{Mo}: it is a monad in the $2$-category 
$OpMon$ having monoidal categories as objects, colax monoidal functors as $1$-morphisms and natural transformations of those as $2$-morphisms. 

\subsubsection{The dual Hopf comonad associated with the jet algebra}

Notice that the strong monoidal functor ${\it U}$ also has a right adjoint ${\it G}:=\sHom_{\mathcal A\textrm{-}mod}\big(\U(\mathfrak g),-)$, 
which is {\it lax} monoidal. Going along the same lines as above one sees that ${\it S}:={\it U}{\it G}$ (which is right adjoint to ${\it T}$) 
is a {\it Hopf comonad}, meaning that it is a comonad in the $2$-category $Mon$ having monoidal categories as objects, lax monoidal functors 
as $1$-morphisms and natural transformations of those as $2$-morphisms. 

Finally recall that $\sHom_{\mathcal A\textrm{-}mod}\big(\U(\mathfrak g),-)\cong{\bf J}(\mathfrak{g})\underset{\mathcal A}{\widehat\otimes}-$, 
where the $\mathcal A$-bimodule structure on ${\bf J}(\mathfrak{g})$ is the one described in  Paragraph~\ref{sec:2.2.3}. 
Notice that, on the one hand, the lax monoidal structure on ${\it S}$ is given by the coproduct on $\U(\mathfrak g)$, and thus by the product 
on ${\bf J}(\mathfrak{g})$: 
$$
\Big({\bf J}(\mathfrak{g})\underset{\mathcal A}{\widehat\otimes}-\Big)
\otimes_{\mathcal A}\Big({\bf J}(\mathfrak{g})\underset{\mathcal A}{\widehat\otimes}-\Big)
\cong\big({\bf J}(\mathfrak{g})\otimes_{\mathcal A}{\bf J}(\mathfrak{g})\big)\underset{\mathcal A\otimes \mathcal A}{\widehat\otimes}(-\otimes-)
\Longrightarrow{\bf J}(\mathfrak{g})\underset{\mathcal A}{\widehat\otimes}(-\otimes_{\mathcal A}-)\,.
$$
On the other hand, the comonad structure on ${\it S}$ is given by the product on $\U(\mathfrak g)$, and hence by the coproduct 
on ${\bf J}(\mathfrak g)$: 
$$
{\bf J}(\mathfrak{g})\underset{\mathcal A}{\widehat\otimes}-\Longrightarrow
\Big({\bf J}(\mathfrak{g})\underset{\mathcal A}{\widehat\otimes}{\bf J}(\mathfrak{g})\Big)\underset{\mathcal A}{\widehat\otimes}-\cong
{\bf J}(\mathfrak{g})\underset{\mathcal A}{\widehat\otimes}\Big({\bf J}(\mathfrak{g})\underset{\mathcal A}{\widehat\otimes}-\Big)\,.
$$

\subsection{The Hopf (co)monad associated with a closed embedding}

Similarly to the above, the left adjoint $i_\dagger $, resp.~the right adjoint $i_*$, to the strong monoidal functor $i^*$ is colax monoidal, 
resp.~lax monoidal. Hence $i^*i_\dagger $ is a Hopf monad and $i^*i_*$ is a Hopf comonad. 

We already know (see Subsections~\ref{sec-2.4} and~\ref{sec-2.5}) that there are isomorphisms of functor $i^*i_*\cong {\it U}{\it G}$ and 
$i^*i_\dagger \cong {\it U}{\it F}$ whenever $(\mathcal A,\mathfrak g)=(\mathcal O_{\wtd{X}},{\bf T}_{\wtd{X}/Y})$.  
It remains to be shown that the Hopf (co)monad structures coincide. 

\subsubsection{The Hopf comonad associated with a morphism of algebras}

In this paragraph, functors are not derived. Let $\mathcal B\to\mathcal A$ be a morphism of (sheaves of dg-)algebras. 
The strong monoidal (dg-)functor $\mathcal A\otimes_{\mathcal B}-:\mathcal B\textrm{-}mod\to\mathcal A\textrm{-}mod$ 
admits the ``forgetful'' functor $\mathcal A\textrm{-}mod\to\mathcal B\textrm{-}mod$ as a right adjoint, which is then lax monoidal. 
Again, this turns $\mathcal A\otimes_{\mathcal B}-:\mathcal A\textrm{-}mod\to\mathcal A\textrm{-}mod$ into a (dg-)Hopf comonad. 
Notice that the lax monoidal structure 
$(\mathcal A\otimes_{\mathcal B}-)\otimes_{\mathcal A}(\mathcal A\otimes_{\mathcal B}-)\Rightarrow\mathcal A\otimes_{\mathcal B}(-\otimes_{\mathcal A}-)$
is given by the product of $\mathcal A$ while the comonad structure 
$\mathcal A\otimes_{\mathcal B}-\Rightarrow\mathcal A\otimes_{\mathcal B}(\mathcal A\otimes_{\mathcal B}-)$ 
is given by the morphism $\mathcal A\to\mathcal A\otimes_{\mathcal B}\mathcal A,a\mapsto a\otimes 1$. 

Observe that this Hopf comonad can be seen as the Hopf comonad associated with a Hopf algebroid 
in a similar way to what happens with the jet algebra. Namely, the algebra $\mathcal A\otimes_{\mathcal B}\mathcal A$ 
is equipped with two obvious algebra maps $\mathcal A\to\mathcal A\otimes_{\mathcal B}\mathcal A$
and a coproduct 
\begin{eqnarray*}
\Delta:\mathcal A\otimes_{\mathcal B}\mathcal A & \longrightarrow & 
(\mathcal A\otimes_{\mathcal B}\mathcal A)\underset{\mathcal A}{\otimes}(\mathcal A\otimes_{\mathcal B}\mathcal A)
\cong \mathcal A\otimes_{\mathcal B}\mathcal A\otimes_{\mathcal B}\mathcal A \\
a\otimes a' & \longmapsto & a\otimes 1\otimes a'
\end{eqnarray*}
which satisfy (similarly to ${\bf J}(\mathfrak g)$) the axioms of a cogroupoid object in (sheaves of dg-)commutative algebras. 
This proves the functor 
$(\mathcal A\otimes_{\mathcal B}\mathcal A)\underset{\mathcal A}{\otimes}-:\mathcal A\textrm{-}mod\to\mathcal A\textrm{-}mod$
with the structure of a Hopf comonad: as in the case of ${\bf J}(\mathfrak g)$, the lax monoidal structure comes from the product 
of $\mathcal A\otimes_{\mathcal B}\mathcal A$ while the comonad structure comes from its coproduct. 
\begin{prop}\label{prop-3.1}
There is a natural isomorphism $\mathcal A\otimes_{\mathcal B}-\tilde\to(\mathcal A\otimes_{\mathcal B}\mathcal A)\underset{\mathcal A}{\otimes}-$ 
of Hopf comonads. 
\end{prop}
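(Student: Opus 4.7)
The plan is to exhibit an explicit natural isomorphism at the level of underlying functors and then verify separately that it intertwines the two pieces of structure carried by a Hopf comonad: the comonad structure (comultiplication and counit) and the lax monoidal structure.

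For the first step, for any left $\mathcal A$-module $M$ I would define
\[
\phi_M \colon (\mathcal A\otimes_{\mathcal B}\mathcal A)\underset{\mathcal A}{\otimes}M\longrightarrow\mathcal A\otimes_{\mathcal B}M\,,\quad (a\otimes a')\otimes m\longmapsto a\otimes(a'\cdot m)\,,
\]
with inverse $a\otimes m\mapsto (a\otimes 1)\otimes m$. Here the tensor product over $\mathcal A$ uses the right $\mathcal A$-action on $\mathcal A\otimes_{\mathcal B}\mathcal A$ through the second factor (i.e.~the target map $a\mapsto 1\otimes a$), whereas the residual left $\mathcal A$-module structure coming from the source map $a\mapsto a\otimes 1$ makes $\phi_M$ an $\mathcal A$-linear natural isomorphism. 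This is nothing but the canonical isomorphism $\mathcal A\underset{\mathcal A}{\otimes}M\cong M$ applied to the second tensor factor.

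Second, I would check compatibility with the counits. Both comonads have their counit given by the multiplication of $\mathcal A$: on $\mathcal A\otimes_{\mathcal B}-$ by $a\otimes m\mapsto a\cdot m$, and on $(\mathcal A\otimes_{\mathcal B}\mathcal A)\otimes_{\mathcal A}-$ by $(a\otimes a')\otimes m\mapsto (aa')\cdot m$. Commutativity of the counit square is immediate from the definition of $\phi_M$. For the comultiplications, the map on $\mathcal A\otimes_{\mathcal B}-$ sends $a\otimes m\mapsto a\otimes 1\otimes m$ in $\mathcal A\otimes_{\mathcal B}(\mathcal A\otimes_{\mathcal B}M)$, whereas the map on $(\mathcal A\otimes_{\mathcal B}\mathcal A)\otimes_{\mathcal A}-$ is induced by the cogroupoid coproduct $\Delta(a\otimes a')=a\otimes 1\otimes a'$. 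A straightforward calculation chasing an element through $\phi_M$, $\phi_{\mathcal A\otimes_{\mathcal B}M}$ and $\phi_M\otimes\phi$ shows that both comultiplications agree.

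Third, I would verify that the lax monoidal structures match. On $\mathcal A\otimes_{\mathcal B}-$ the coherent map
\[
(\mathcal A\otimes_{\mathcal B}M)\underset{\mathcal A}{\otimes}(\mathcal A\otimes_{\mathcal B}N)\longrightarrow\mathcal A\otimes_{\mathcal B}(M\underset{\mathcal A}{\otimes}N)
\]
is $(a\otimes m)\otimes(a'\otimes n)\mapsto aa'\otimes(m\otimes n)$, induced by the multiplication of $\mathcal A$. On $(\mathcal A\otimes_{\mathcal B}\mathcal A)\otimes_{\mathcal A}-$, the analogous map is built from the algebra product on $\mathcal A\otimes_{\mathcal B}\mathcal A$. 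Writing both compositions out and applying $\phi$, the two maps are seen to be identified thanks to associativity and to the compatibility of the product on $\mathcal A\otimes_{\mathcal B}\mathcal A$ with the two $\mathcal A$-module structures.

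The argument is essentially bookkeeping: the real work lies in keeping the source and target $\mathcal A$-actions on $\mathcal A\otimes_{\mathcal B}\mathcal A$ separate, so that $\phi$ is genuinely $\mathcal A$-linear for the ``outer'' $\mathcal A$-action while the inner action is the one that gets contracted. The main place where one must be careful is the compatibility with comultiplications, because the cogroupoid structure on $\mathcal A\otimes_{\mathcal B}\mathcal A$ inserts a $1$ in the middle and one must match this precisely with the canonical comultiplication $a\otimes m\mapsto a\otimes 1\otimes m$ on $\mathcal A\otimes_{\mathcal B}-$; once the $\mathcal A$-actions are correctly tracked this is automatic.
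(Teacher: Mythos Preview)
Your proof is correct and follows exactly the same approach as the paper. The paper writes down the natural isomorphism $a\otimes m\mapsto a\otimes 1\otimes m$ (your inverse to $\phi_M$) and then simply asserts that it ``obviously commutes with the comonad and lax monoidal structures''; you have spelled out precisely those verifications.
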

\begin{proof}
Let $\mathcal M$ be an $\mathcal A$-module. One observes that the natural isomorphism 
$\mathcal A\otimes_{\mathcal B}\mathcal M\tilde\to(\mathcal A\otimes_{\mathcal B}\mathcal A)\underset{\mathcal A}{\otimes}\mathcal M$ is given by 
$a\otimes m\mapsto a\otimes 1\otimes m$ and obviously commutes with the comonad and lax monoidal structures. 
\end{proof}

\subsubsection{Identifying $i^*i_*$ and ${\it U}{\it G}$ as Hopf comonads}

\begin{prop}\label{prop-3.2}
Let $\mathcal B\to\mathcal A$ and $\mathfrak g$ be as in Example \ref{exa:DR}. Then the dg-algebra morphism 
$$
\mathcal A\otimes_{\mathcal B}\mathcal A\longrightarrow {\bf J}(\mathfrak{g})
$$
introduced in Example \ref{exa:jets} is a dg-Hopf algebroid morphism. 
\end{prop}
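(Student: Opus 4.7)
The plan is as follows. Example~\ref{exa:jets} already establishes that the map $\phi:\mathcal A\otimes_{\mathcal B}\mathcal A\to{\bf J}(\mathfrak g)$ defined by $\phi(a\otimes a')(P)=aP(a')$ is a morphism of dg-algebras. What remains to be checked is its compatibility with the cogroupoid structure, namely with the two source/target maps from $\mathcal A$, with the coproduct, and with the counit. Compatibility with source and target is immediate from the formulas in Paragraph~\ref{sec:2.2.3}: one computes $\phi(a\otimes 1)=\bigl(P\mapsto aP(1)\bigr)$, which is the source map of ${\bf J}(\mathfrak g)$, and $\phi(1\otimes a)=\bigl(P\mapsto P(a)\bigr)$, which is its target. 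For the counits, the multiplication $a\otimes a'\mapsto aa'$ on the left corresponds on the right to evaluation at $1\in\U(\mathfrak g)$, and indeed $\phi(a\otimes a')(1)=aa'$.

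The substantive content is coproduct compatibility, which I would verify by direct computation on generators. The coproduct on $\mathcal A\otimes_{\mathcal B}\mathcal A$ sends $a\otimes a'$ to $a\otimes 1\otimes a'$, viewed inside $(\mathcal A\otimes_{\mathcal B}\mathcal A)\underset{\mathcal A}{\otimes}(\mathcal A\otimes_{\mathcal B}\mathcal A)$ where the middle copy of $\mathcal A$ is balanced against the target map on the left factor and the source map on the right factor. Applying $\phi\otimes\phi$ produces an element $\beta_1\otimes\beta_2$ with $\beta_1(P)=aP(1)$ and $\beta_2(Q)=Q(a')$. On the other side, the coproduct on ${\bf J}(\mathfrak g)$ is dual to the multiplication in $\U(\mathfrak g)$, so $\Delta\bigl(\phi(a\otimes a')\bigr)$ pairs with $P\otimes_{\mathcal A}Q$ to $\phi(a\otimes a')(PQ)=a\cdot P\bigl(Q(a')\bigr)$. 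The matching then reduces to the identity $\beta_1\bigl(P\cdot\beta_2(Q)\bigr)=aP(Q(a'))$, which holds because $\beta_1(P\cdot f)=aP(f)$ for $f\in\mathcal A$ embedded in $\U(\mathfrak g)$.

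The main obstacle will be the careful bookkeeping of the two distinct $\mathcal A$-module structures on $\U(\mathfrak g)$ and ${\bf J}(\mathfrak g)$: the source map makes ${\bf J}$ an $\mathcal A$-algebra in one way, the target in another, and the coproduct lives in the tensor product balanced against a specific choice. Once these identifications are pinned down, the required equalities follow from the universal property of $\U(\mathfrak g)$ and its defining action on $\mathcal A$. The formal completion appearing in ${\bf J}(\mathfrak g)\underset{\mathcal A}{\widehat\otimes}{\bf J}(\mathfrak g)$ causes no difficulty since the image of $\phi\otimes\phi$ factors through the uncompleted tensor product $\mathcal A\otimes_{\mathcal B}\mathcal A\otimes_{\mathcal B}\mathcal A$.
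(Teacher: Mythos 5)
Your verification is correct and is exactly the direct check that the paper leaves to the reader (its proof is the single word ``Straightforward''): the formulas $\phi(a\otimes 1)(P)=aP(1)$ and $\phi(1\otimes a)(P)=P(a)$ match the two structure maps of ${\bf J}(\mathfrak g)$, the counit check $\phi(a\otimes a')(1)=aa'$ is right, and your coproduct computation $\beta_1\bigl(P\cdot\beta_2(Q)\bigr)=aP\bigl(Q(a')\bigr)=\phi(a\otimes a')(PQ)$ is the correct duality between the product on $\U(\mathfrak g)$ and the coproduct $a\otimes a'\mapsto a\otimes 1\otimes a'$. Nothing is missing.
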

\begin{proof}
Straightforward. 
\end{proof}
Let $\mathcal A=\cO_{\widetilde{X}}$ and $\mathfrak{g}=T_{\widetilde{X}/Y}$. 
Then it follows from Theorem \ref{thm:complete} that 
$\mathcal A\otimes_{\mathcal B}\mathcal A\to\mathcal A\hat\otimes_{\mathcal B}\mathcal A\cong{\bf J}(\mathfrak g)$ is a quasi-isomorphism. 
Recall that $\mathcal A\otimes_{\mathcal B}\mathcal A=\cO_{\widetilde{X}\times_Y\widetilde{X}}$ and 
$\mathcal A\hat\otimes_{\mathcal B}\mathcal A=\cO_{\Delta}^{(\infty)}$. The next result then follows from Propositions 
\ref{prop-3.1} and \ref{prop-3.2}. 
\begin{thm}
There is a natural quasi-isomorphism $\pi^*\pi_*\Rightarrow{\it S}$ of dg-Hopf comonads 
on $\cO_{\widetilde{X}}\textrm{-}mod$. 
\end{thm}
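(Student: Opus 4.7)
The plan is to assemble the statement from the three identifications already set up in Section~\ref{sec:mon}, chaining them through the intermediate Hopf comonad associated to $\mathcal O_{\widetilde X}\otimes_{\mathcal O_Y}\mathcal O_{\widetilde X}$.

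First I would identify $\pi^*\pi_*$ with the Hopf comonad $\mathcal A\otimes_{\mathcal B}-$ of the previous paragraph, taking $\mathcal A=\mathcal O_{\widetilde X}$ and $\mathcal B=\mathcal O_Y$. Here $\pi:\widetilde X\to Y$ is flat (in the dg sense), so $\pi^*$ coincides with $\mathcal A\otimes_{\mathcal B}-$ on the underlying module categories and the unit/counit of the $(\pi^*,\pi_*)$ adjunction are the ones coming from the $(\text{extension},\text{forgetful})$ adjunction attached to the algebra map $\mathcal B\to\mathcal A$; in particular the Hopf comonad structure on $\pi^*\pi_*$ is precisely the one described just before Proposition~\ref{prop-3.1}.

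Next, Proposition~\ref{prop-3.1} supplies a natural isomorphism of dg-Hopf comonads
\[
\mathcal A\otimes_{\mathcal B}-\;\raiso\;(\mathcal A\otimes_{\mathcal B}\mathcal A)\underset{\mathcal A}{\otimes}-,
\]
and Proposition~\ref{prop-3.2} upgrades the map $\mathcal A\otimes_{\mathcal B}\mathcal A\to {\bf J}(\mathfrak g)$ of Example~\ref{exa:jets} (with $\mathfrak g=T_{\widetilde X/Y}$) to a morphism of dg-Hopf algebroids. Applying the functor $(-)\underset{\mathcal A}{\otimes}-$ (and passing to the completed tensor product on the target, which is harmless because we are mapping out of the nilpotent ideal of the diagonal) yields a morphism of dg-Hopf comonads
\[
(\mathcal A\otimes_{\mathcal B}\mathcal A)\underset{\mathcal A}{\otimes}-\;\Longrightarrow\;{\bf J}(\mathfrak g)\underset{\mathcal A}{\widehat\otimes}-\;\cong\;{\it U}{\it G}={\it S},
\]
where the last identification is the one recalled at the end of Subsection~2.5 and reprised in the previous paragraph of Section~\ref{sec:mon}.

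Finally, I would check that this last morphism is pointwise a quasi-isomorphism. For a single module this reduces, by base change along $\mathcal A$, to the statement that $\mathcal A\otimes_{\mathcal B}\mathcal A\to {\bf J}(\mathfrak g)$ is a quasi-isomorphism; this is exactly Theorem~\ref{thm:complete} combined with Proposition~\ref{thm:ug}, since under these identifications the map in question is the completion $\mathcal O_{\widetilde X\times_Y\widetilde X}\to\mathcal O_\Delta^{(\infty)}$, which is a quasi-isomorphism because $Y$ (and hence $\widetilde X\times_Y\widetilde X$) has bounded $\underline{\mathrm{Tor}}$-dimension. Composing the isomorphism from Proposition~\ref{prop-3.1} with this quasi-isomorphism gives the desired natural quasi-isomorphism of dg-Hopf comonads $\pi^*\pi_*\Rightarrow {\it S}$.

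The only real point requiring care is the compatibility with both the lax monoidal structure and the comonad structure simultaneously: the former corresponds, on both sides, to the \emph{product} of the relevant Hopf algebroid ($\mathcal A\otimes_{\mathcal B}\mathcal A$ or ${\bf J}(\mathfrak g)$), while the latter corresponds to its \emph{coproduct}. Proposition~\ref{prop-3.2} packages exactly this double compatibility, so once it is invoked the remaining verifications are formal. The hard part is therefore hidden in that proposition; granted it, the theorem is a direct assembly.
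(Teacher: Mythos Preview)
Your proposal is correct and follows essentially the same route as the paper: the paper's argument is precisely to specialize to $\mathcal A=\mathcal O_{\widetilde X}$, $\mathcal B=\mathcal O_Y$, $\mathfrak g=T_{\widetilde X/Y}$, invoke Theorem~\ref{thm:complete} (with Proposition~\ref{thm:ug}) for the quasi-isomorphism $\mathcal A\otimes_{\mathcal B}\mathcal A\to{\bf J}(\mathfrak g)$, and then conclude by Propositions~\ref{prop-3.1} and~\ref{prop-3.2}. Your write-up is somewhat more detailed (e.g.\ spelling out the identification of $\pi^*\pi_*$ with $\mathcal A\otimes_{\mathcal B}-$ and the passage to the completed tensor product), but the skeleton is identical.
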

If one denotes by the same symbols dg-functors and the induces functors on the homotopy category, one then has 
\begin{cor}
The Hopf comonads $i^*i_*$ and ${\it S}$ on $\D(X)$ are natually isomorphic. Therefore their respective left adjoint Hopf monads 
$i^*i_\dagger $ and ${\it T}$ are isomorphic too. 
\end{cor}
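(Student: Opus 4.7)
The plan is to derive the corollary as a direct consequence of the theorem immediately preceding it, together with two standard facts: first, the base-change equivalence between the relevant homotopy categories, and second, the fact that adjoints to naturally isomorphic (co)monoidal functors are naturally isomorphic as (co)monoidal functors.

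First I would pass to the homotopy categories. The theorem above produces a natural quasi-isomorphism $\pi^*\pi_* \Rightarrow {\it S}$ of dg-Hopf comonads on $\cO_{\widetilde{X}}\textrm{-}mod$. Since a natural quasi-isomorphism between (compositions of) dg-functors descends to a natural isomorphism between the corresponding derived functors, we get a natural isomorphism $\pi^*\pi_* \Rightarrow {\it S}$ in $\mathbf{D}(\widetilde X)$, and this isomorphism is compatible with the lax monoidal and comonad structures, i.e.\ it is an isomorphism of Hopf comonads on $\mathbf{D}(\widetilde X)$.

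Next I would transport this isomorphism along the triangulated equivalence $j^*:\D(\widetilde X)\raiso\D(X)$ recalled just before Theorem~\ref{thm:intro}. Under this equivalence the endofunctor $\pi^*\pi_*$ on $\D(\widetilde X)$ corresponds (via the factorization $i=\pi\circ j$ with $j$ a quasi-equivalence and $\pi$ smooth) to the endofunctor $i^*i_*$ on $\D(X)$; this is exactly the identification used at the end of the proof of Theorem~\ref{thm:intro2}. The monoidal structures also correspond because $j^*$ is itself a (symmetric) monoidal equivalence. Combining this identification with the previous step yields a natural isomorphism $i^*i_*\cong {\it S}$ of Hopf comonads on $\D(X)$, which is the first claim of the corollary.

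Finally, to deduce the dual statement about the left adjoints, I would invoke the formal principle that a natural isomorphism of Hopf comonads induces, by taking left adjoints objectwise, a natural isomorphism of the associated Hopf monads. Concretely, the left adjoint of $i^*i_*$ is $i^*i_\dagger$ (since $i_\dagger\dashv i^*\dashv i_*$), while the left adjoint of ${\it S}={\it U}{\it G}$ is ${\it T}={\it U}{\it F}$ (since ${\it F}\dashv{\it U}\dashv{\it G}$); uniqueness of adjoints then gives $i^*i_\dagger \cong {\it T}$, and the colax monoidal/monad structures transport along this isomorphism by mate-correspondence applied to the isomorphism of Hopf comonads.

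The only potentially delicate point is the verification that the mate-correspondence really does send the Hopf comonad isomorphism to a Hopf monad isomorphism, i.e.\ that the colax monoidal structures on $i_\dagger$ and ${\it F}$ match up correctly under the identifications of Theorems~\ref{thm:intro} and~\ref{thm:intro2}. This is a formal but slightly tedious $2$-categorical check in $OpMon$ and $Mon$; once one has it, no further computation is needed and the corollary is immediate.
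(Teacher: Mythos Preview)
Your proposal is correct and follows essentially the same approach as the paper, which in fact gives no explicit proof for this corollary: the paper simply remarks that passing from dg-functors to the induced functors on the homotopy category yields the statement from the preceding theorem. Your write-up spells out precisely the two steps the paper leaves implicit---transporting along the equivalence $j^*:\D(\widetilde X)\to\D(X)$ and invoking uniqueness of adjoints (with mate-correspondence for the colax structures)---so it is, if anything, a more careful version of the paper's own argument.
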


\section{Deformation theory of sheaves of algebras}
\label{sec:deform}

Let ${\bf k}$ be a base field which is of characteristic zero. All algebras are $k$-algebras, and derivations of algebras are ${\bf k}$-linear. In this section we recollect basic facts on the folklore that deformations of a sheaf of commutative algebras $\cA$ is governed by a differential graded Lie algebra resolving the tangent lie algebra $T_\cA$. To ensure convergence we restrict to a pronilpotent Lie subalgebra $T^+_\cA$ of $T_\cA$, and assume that the exponential map
\[\exp: T^+_\cA \ra \Aut(\cA)\]
is well-defined. We denote the image $\exp(T^+_\cA)$ by $\Aut^+(\cA)$. This is a sheaf of prounipotent groups.

\subsection{Thom-Whitney resolutions of sheaves of algebras}

Let $\mathcal U:=\{ U_i \} _{i\in \cI}$ be an open covering of a topological space $X$. 
For an open subset $U_i\subset X$ we denote by $\rho_i$ the inclusion $U_i\hookrightarrow X$. 
For a multi-index $I=(i_0,\cdots, i_k)\in \cI^{k+1}$ we denote by $U_I$ the intersection $U_{i_0}\cap\cdots U_{i_k}$, and by $\rho_I$ the inclusion $U_I\hookrightarrow X$. Associated to this data is a simplicial space 
$$
\mathcal U_\bullet:=\left(\cdots \coprod_{I\in \cI^{k+1}} U_I
\cdots\triplearrows \coprod_{(i_0,i_1)\in \cI^2} U_{i_0,i_1} 
\rightrightarrows\coprod_{i_0\in\cI}U_{i_0} \right)\,.
$$
Let $\mathcal A$ be a sheaf of algebras on a topological space $X$. We get a cosimplicial sheaf of algebras 
$$\mathcal A_{\mathcal U}^\bullet:=\left(\coprod_{i_0\in\cI}(\rho_{i_0})_*(\rho_{i_0})^*\mathcal A \rightrightarrows \coprod_{(i_0,i_1)\in \cI^2} (\rho_{i_0,i_1})_*(\rho_{i_0,i_1})^*\mathcal A
\triplearrows \cdots\cdots \coprod_{I\in \cI^{k+1}} (\rho_I)_*(\rho_I)^*\mathcal A
\right)$$ 
on $X$. Its global sections $\mathcal A_{\mathcal U}^\bullet(X)$ form a cosimplicial algebra.

The (sheaf version) \v Cech complex ${\mathcal C}^*(\cU, \cA):=\Tot(\cA_\cU^\bullet)$ (or simply ${\mathcal C}^*(\cA)$) of the cosimplicial sheaf of algebras $\mathcal A_{\mathcal U}^\bullet$ gives a resolution of $\mathcal A$. We denote by $C^*(\cA)$ its global sections which form a differential graded algebra. However if $\cA$ were commutative, the \v Cech resolution $\mathcal C^*(\cA)$ is in general not commutative. For purposes of this paper we need to consider commutative (or Lie) resolution of sheaves of commutative (Lie respectively) algebras. Since the \v Cech resolution does not respect the symmetric monoidal structure, we need to use a better resolution.

\subsubsection{Thom-Whitney complex} 
We now describe a symmetric monoidal functor $TW$ from cosimplicial cochain complexes to cochain complexes. We refer to the Appendix in~\cite{FMM} for a more detailed discussion. Given a cosimplicial cochain complex 
$V^\bullet= V^0\rightrightarrows V^1\triplearrows V^2 \cdots$, we define a cochain complex $\TW(V^\bullet)$ as follows. Denote by
\[ d^{n,k}: V^{n-1}\ra V^n\]
the structure maps of $V^\bullet$ for $n\geq 1$, and $0\leq k\leq n$. Denote by $\Omega^*_{\Delta^n}$ algebraic differential forms on the $n$-simplex. More precisely, this is the differential graded commutative algebra generated by degree zero variables $t_0,\cdots,t_n$ and degree one variables $dt_0,\cdots,dt_n$ with relations:
\[ \Omega^*_{\Delta^n}:= k[t_0,\cdots,t_n,dt_0,\cdots,dt_n]/\langle t_0+\cdots+t_n=1, dt_0+\cdots+dt_n=0\rangle.\]
Its differential is defined by $d(t_i)=dt_i$ and $d(dt_i)=0$ on generators. The collection $\{\Omega_{\Delta^n}^*\}_{n=0}^\infty$ form a simplicial ring whose structure map
\[ \partial_{n,k}: \Omega^*_{\Delta^n} \ra \Omega^*_{\Delta^{n-1}} \]
is the pull-back map induced by the $k$-th ($0\leq k \leq n$) inclusion map $\Delta^{n-1}\hookrightarrow \Delta^n$ of standard simplices. The cochain complex $\TW(V^\bullet)$ is defined to be the sub-complex of the product
\[\prod_{n=0}^\infty  V^n \otimes \Omega^*_{\Delta^n},\]
consisting of elements of the form $\prod_{n=0}^\infty x_n$ such that
\[ (d^{n+1,k}\otimes \id) x_n = (\id\otimes \partial_{n+1, k}) x_{n+1}\;\; \forall n\geq 0, 0\leq k \leq n+1.\]
In short, the cochain complex $\TW(V^\bullet)$ is simply the equalizer
$$
\TW (V^\bullet)=
\eq\left(\prod_n V^n\otimes \Omega^*_{\Delta^n}\doublerightarrow{\varphi_*\otimes \id}{\id\otimes\varphi^*}\prod_{\varphi\in\Delta([k],[l])} V^k\otimes \Omega^*_{\Delta^l}\right)\,.
$$
The cochain complex $\TW(V^\bullet)$ is quasi-isomorphic to $\Tot(V^\bullet)$. In fact there is a homotopy retraction between the two complexes
\begin{align*}
I: \Tot(V^\bullet) &\ra \TW(V^\bullet)\\
P: \TW(V^\bullet) &\ra \Tot(V^\bullet)\\
H: \TW(V^\bullet) &\ra \TW(V^\bullet)
\end{align*}
where the maps $I$, $P$ and $H$ are given by explicit formulas.

The functor $\TW$ provides a nice (i.e. functorial) way of describing homotopy limits of cosimplicial diagrams within the category of cochain complexes. Being symmetric monoidal  it sends commutative, Lie and associative cosimplicial dg-algebras 
to commutative, Lie and associative dg-algebras, respectively. It also sends cosimplicial dg-modules (over a cosimplicial dg-algebra) 
to dg-modules, and preserves the tensor product of those (because $\TW$ commutes with finite colimits, and thus with push-outs). Observe that $\TW$ extends to  complete topological cochain complexes, with the completed tensor product $\hat\otimes$ as monoidal product.

Given a sheaf of algebras $\mathcal A$ and an open cover $\mathcal U$, we have a cosimplicial sheaf of algebras $\cA_\cU^\bullet$ as described above. Applying the Thom-Whitney functor (in the category of sheaves) we get a quasi-isomorphism of sheaves of 
differential graded algebras 
\[ \mathcal A\longrightarrow \cC^*(\cU,\cA)=\Tot(\cA_\cU^\bullet) \stackrel{I}{\longrightarrow} \TW(\mathcal A_{\mathcal U}^\bullet).\]
This provides a flasque resolution of $\mathcal A$ as soon as $\mathcal A$ is flasque on every open subset $U_{i_0}$. In the following we use the notation $\cC^*_{\TW}(\cU,\cA)$ (or simply $\cC^*_\TW(\cA)$) to denote the Thom-Whitney resolution. Its global sections will be denoted by $C^*_\TW(\cA)$. Due to the monoidal properties of the functor $\TW$, the resolution $\cC^*_\TW(\cA)$ is commutative (or Lie) whenever $\cA$ is.

\subsubsection{The Totalization functor}
We will need to descibe another kind of homotopy limit. Namely, there is a functor $\Tot$~\footnote{We used the same notation $\Tot$ when forming total complexes, but this should not cause any confusion.} from cosimplicial simplicial sets to simplicial sets defined, on a given cosimplicial simplicial set $X^\bullet$, by 
$$
\Tot(X^\bullet):=
\eq\left(\prod_n(X^n)^{\Delta^n}\doublerightarrow{\varphi_*}{\varphi^*}\prod_{\varphi\in\Delta([k],[l])}(X^k)^{\Delta^l}\right)\,.
$$
Notice that $(X^k)^{\Delta^l}=X^k_l$, where we put the simplicial degree in the lower index. 
The functor $\Tot$ provides a nice way of describing homotopy limits of cosimplicial diagrams within 
the category of simplicial sets.

\subsection{Maurer-Cartan elements versus non-abelian $1$-cocycles}\label{MCvsNA}
We shall define two groupoids naturally associated to a cosimplicial pronilpotent Lie algebra $\gog^\bullet$. Recall the Maurer-Cartan set $\MC(\gog)$ of a pronilpotent differential graded Lie algebra is defined by
\[ \MC(\gog):=\big\{ \theta\in \gog_1\mid d\theta+\frac{1}{2}[\theta,\theta]=0\big\}.\]
A degree zero element $a\in \gog_0$ acts on the set $\MC(\gog)$ by
\[ \theta \mapsto e^a*\theta:= e^{\ad(a)}(\theta)-\frac{e^{\ad(a)}-1}{\ad(a)} (da).\]
The Deligne groupoid $\Del(\mathfrak g)$ of $\gog$ is the action groupoid associated to this action. We define the Deligne groupoid of a cosimplicial pronilpotent Lie algebra $\gog^\bullet$ to be $\Del( \TW(\gog^\bullet))$.

\medskip

The second groupoid $Z^1(\exp(\gog^\bullet))$ associated to $\gog^\bullet$ is defined as follows. 
We denote by $G^\bullet:=\exp(\gog^\bullet)$ the cosimplicial prounipotent group associated to $\gog^\bullet$~\footnote{As a set $G$ is $\mathfrak g$ and the group structure is given 
by the Campbell-Hausdorff formula. }. The set of objects of $Z^1(\exp(\gog^\bullet))$ is the set of nonabelian $1$-cocycles, i.e. $T\in \gog^1$ such that $e^{\partial_0(T)} e^{-\partial_1(T)} e^{\partial_2(T)}=\id$. An element $a\in G^0$ acts on the set of nonabelian $1$-cocycles by
\[ e^T\mapsto e^{-\partial_1(a)} e^T e^{\partial_0(a)}.\]
We define $Z^1(\exp(\gog^\bullet))$ to be the groupoid associated to this action.

\begin{thm}\label{thm:mc}
Let $\mathfrak g^\bullet$ be a cosimplicial pronilpotent Lie algebra and $G^\bullet:=\exp(\mathfrak g^\bullet)$ the 
corresponding cosimplicial ${\bf k}$-prounipotent group. Then there is an equivalence
$$
\Del(\TW(\gog^\bullet)) \cong Z^1(\exp(\gog^\bullet)).
$$
\end{thm}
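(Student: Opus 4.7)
The plan is to construct a functor $\Psi : \Del(\TW(\mathfrak g^\bullet)) \to Z^1(\exp(\mathfrak g^\bullet))$ by interpreting Maurer--Cartan elements as compatible families of flat connections on standard simplices and integrating them, and then to show that $\Psi$ is an equivalence of groupoids.

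First I would unpack the left-hand side. An element $\theta \in \MC(\TW(\mathfrak g^\bullet))$ is a compatible sequence $(\theta_n)_{n\geq 0}$ with $\theta_n \in \mathfrak g^n \mathbin{\hat\otimes} \Omega^1_{\Delta^n}$, and the MC equation $d\theta + \tfrac12[\theta,\theta] = 0$ reduces, on each $\Delta^n$, to the flatness of $\theta_n$ as a $\mathfrak g^n$-valued connection. Writing $\theta_1 = f(t)\,dt$ and invoking pronilpotence of $\mathfrak g^1$, the initial value problem $g'(t) = g(t)f(t)$, $g(0) = 1$, has a unique solution on $[0,1]$, and I set $\Psi(\theta) := T \in \mathfrak g^1$ by $e^T := g(1)$. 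Checking that $T$ is a nonabelian $1$-cocycle amounts to a holonomy calculation on $\Delta^2$: the face-compatibility $(d^{2,k}\otimes \id)\theta_1 = (\id\otimes \partial_{2,k})\theta_2$ identifies each of the three edge-holonomies of $\theta_2$ with the image of $e^T$ under the corresponding coface map, and flatness of $\theta_2$ on the contractible triangle forces the product of these holonomies around the boundary to be trivial, which rearranges into the required identity $e^{\partial_0 T} e^{-\partial_1 T} e^{\partial_2 T} = \id$.

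On morphisms, a gauge equivalence is a degree-zero element $a \in \TW(\mathfrak g^\bullet)^0$; evaluating at the vertex extracts $a_0 \in \mathfrak g^0$, and a direct computation using the gauge action $\theta \mapsto e^{\ad(a)}\theta - \frac{e^{\ad(a)}-1}{\ad(a)}(da)$ restricted to $\Delta^1$ shows that $\Psi$ sends the gauge action of $a$ on $\theta_1$ to the action $e^T \mapsto e^{-\partial_1 a_0}\, e^T\, e^{\partial_0 a_0}$ of $e^{a_0} \in G^0$ on the associated $1$-cocycle. This makes $\Psi$ into a well-defined functor of groupoids.

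The main obstacle is proving that $\Psi$ is an equivalence. Essential surjectivity can be handled by explicit construction: given $T \in Z^1(\exp(\mathfrak g^\bullet))$, build $\theta_n$ on each $\Delta^n$ as a flat $\mathfrak g^n$-valued connection whose edge holonomies are prescribed by $T$ via iterated (Chen-type) integrals, and assemble these compatibly with the cosimplicial structure of $\mathfrak g^\bullet$. Full faithfulness is the genuinely delicate point, because $\TW(\mathfrak g^\bullet)^0$ is in general much larger than $\mathfrak g^0$: it consists of compatible polynomial maps $a_n : \Delta^n \to \mathfrak g^n$, so one must prove that every gauge parameter $a$ with $a_0 = 0$ acts trivially on MC-equivalence classes. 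The cleanest route I see is to exploit a Dupont-type deformation retract of $\TW(\mathfrak g^\bullet)$ onto its normalized subcomplex and transfer the MC equation along it; the resulting gauge-fixing lemma---every MC element is gauge equivalent to one whose $\theta_n$ for $n \neq 1$ is in a canonical ``constant-along-the-simplex'' form determined by $\theta_1$---is where the technical heart of the proof resides, and it reduces the comparison of automorphism groups to the already-verified identification of the $G^0$-actions on vertex data.
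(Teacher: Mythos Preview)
Your approach is correct and shares the same geometric core as the paper's---interpreting Maurer--Cartan elements as flat connections on simplices and extracting holonomies---but the overall architecture is different. The paper does not work directly at the groupoid level. Instead it first proves, for a single pronilpotent Lie algebra $\mathfrak g$, a weak equivalence of simplicial sets $\MC_\bullet(\mathfrak g)\to N(G)$ (your holonomy map, promoted to all simplicial degrees). It then invokes a second lemma, essentially due to Hinich, identifying $\Tot\big(\MC_\bullet(\mathfrak g^\bullet)\big)$ with $\MC_\bullet\big(\TW(\mathfrak g^\bullet)\big)$, and concludes by applying $\pi_{\leq1}$ and the elementary fact that $\pi_{\leq1}\big(\Tot(N(G^\bullet))\big)=Z^1(G^\bullet)$.

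What this buys the paper is precisely an end-run around the step you flag as delicate: full faithfulness. By passing through simplicial sets and $\Tot$, the comparison of gauge equivalences is absorbed into the weak equivalence $\MC_\bullet(\mathfrak g)\simeq N(G)$ and Hinich's result, so no Dupont-retract gauge-fixing lemma is needed. Your route, by contrast, is more self-contained and explicit: you never leave the world of dg Lie algebras and groupoids, at the cost of having to carry out the homotopy-transfer argument by hand. Both are valid; the paper's is shorter if one is willing to cite \cite{Hin}, yours is more elementary if one is not.
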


\begin{proof}
We start with some recollection of homotopy theory. 
We temporary forget about the cosimplicial structure and let $\mathfrak g$ be a pronilpotent Lie algebra, and $G:=\exp(\mathfrak g)$. 
We consider the two following simplicial sets: the nerve $N(G):=\left(\cdots G^2\triplearrows G\rightrightarrows *\right)$ of the group $G$ and the simplicial set $\MC_{\bullet}(\mathfrak g)$ defined by 
$$
\MC_{\bullet}(\mathfrak g):=\MC\left(\Omega_{\Delta^\bullet}\hat\otimes\mathfrak g\right)=
\left\{\theta\in\mathfrak \Omega^1_{\Delta^\bullet}\hat\otimes\mathfrak g\Big|d_{dR}(\theta)+\frac12[\theta,\theta]=0\right\}\,.
$$
\begin{lem}[Folklore]
$N(G)$ is weakly equivalent to $\MC_{\bullet}(\mathfrak g)$. 
\end{lem}
\begin{proof}[Sketch of proof]
We construct an explicit map $e:\MC_{\bullet}(\mathfrak g)\longrightarrow N(G)$. 
Any $\theta\in \MC_n(\mathfrak g)$ determines a flat connection $\nabla_\theta:=d_{dR}+\theta$ on the trivial $G$-bundle over $\Delta^n$. 
We then define an element $e(\theta)\in G^n$ as follows: 
recalling that the vertices of $\Delta^n$ are labelled by $\{0,\dots,n\}$, we define $e(\theta)_i$, $i\in\{1,\dots,n\}$, to be the holonomy of $\nabla_\theta$ along the segment joining $i-1$ to $i$. It turns out that $e$ induces a weak equivalence $\MC_{\bullet}(\mathfrak g)\to N(G)$. 
\end{proof}
\noindent Now let $\mathfrak g^\bullet$ be a cosimplicial pronilpotent Lie algebra, so that $G^\bullet=\exp(\mathfrak g^\bullet)$ 
is a cosimplicial group. 
\begin{lem}
There is a natural weak equivalence 
$$
Tot\left(\MC_\bullet(\mathfrak g^\bullet)\right)\cong \MC_\bullet\left(\TW(\mathfrak g^\bullet)\right)\,.
$$
\end{lem}
\begin{proof}[Sketch of proof]
One knows that this is true for nilpotent Lie algebras after \cite[Theorem 4.1]{Hin}. 
The pronilpotent case comes from the fact that the functor $\MC$ commutes with limits, as well as small limits mutually commute.  
\end{proof}
\noindent The above lemma, together with the fact that 
$\pi_{\leq 1} \left(\Tot\big(N(G^\bullet)\big)\right)=Z^1(G^\bullet)$, give the result. 
\end{proof}

\subsection{Deformations of sheaves of algebras}

Back to the geometric situation, we consider the cosimplicial pronilpotent Lie algebra $\gog^\bullet=T_\cA^{+\bullet}(X)$. Its corresponding cosimplicial prounipotent Lie group $G^\bullet$ which governs the (positive) deformation theory of the sheaf $\cA$. Indeed objects of $Z^1(G^\bullet)$ are non-abelian $1$-cocycles, i.e. a collection $\left\{ \Phi_{i_0i_1}\right\}$ such that $\Phi_{i_0i_1}\in \Aut^+(\cA)\mid_{U_{i_0i_1}}$ and on $U_{i_0i_1i_2}$ we have
\[ \Phi_{i_2i_0}\circ\Phi_{i_1i_2} \circ \Phi_{i_0i_1}=\id.\]
Two such deformations are equivalent if one can be transformed to the other by an element $(\Psi_{i_0})_{i_0\in \cI}$ of the non-abelian \v Cech $0$-cochains acting on non-abelian $1$-cocycles by
\[ (\Phi_{i_0i_1}) \mapsto (\Psi_{i_1}^{-1}\circ \Phi_{i_0i_1} \circ \Psi_{i_0}).\]
Thus Theorem~\ref{thm:mc} implies the following corollary.

\begin{cor}\label{cor:mc}
There is an equivalence of groupoid
\[ \Del\big(C^*_\TW(T^+_\cA)\big) \cong Z^1\big(\exp(T_\cA^{+\bullet}(X)\big).\]
\end{cor}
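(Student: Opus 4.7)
The plan is to deduce Corollary~\ref{cor:mc} as a direct application of Theorem~\ref{thm:mc} to the specific cosimplicial pronilpotent Lie algebra arising from the geometric setup, namely $\mathfrak{g}^\bullet := T_\cA^{+\bullet}(X)$ associated to an open cover $\mathcal U$ of $X$. The right-hand side of Theorem~\ref{thm:mc} then reads literally $Z^1(\exp(T_\cA^{+\bullet}(X)))$, so no work is required there beyond recording the definition. For the left-hand side, I would unpack the definition of $C^*_\TW(T^+_\cA)$: by construction it is the global sections $\TW(\mathcal T_{\cA,\mathcal U}^{+,\bullet})(X)$ of the Thom-Whitney resolution of the cosimplicial sheaf of Lie algebras. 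Since $\TW$ commutes with taking global sections (it is defined as an equalizer of products, and global sections commute with both), we obtain the identification $C^*_\TW(T^+_\cA) = \TW(T_\cA^{+\bullet}(X))$, so $\Del(C^*_\TW(T^+_\cA)) = \Del(\TW(\mathfrak g^\bullet))$.

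Before quoting Theorem~\ref{thm:mc}, I would first verify that $\mathfrak g^\bullet = T_\cA^{+\bullet}(X)$ is a cosimplicial pronilpotent Lie algebra in the appropriate sense. This is immediate: each $T_\cA^{+,n}(X) = \prod_{I\in\cI^{n+1}} T_\cA^+(U_I)$ is a product of pronilpotent Lie algebras, the cosimplicial face and degeneracy maps come from restriction along the simplicial structure of $\mathcal U_\bullet$, and the Lie bracket is sectionwise the bracket of derivations. The pronilpotency assumption is what was built into the restriction from $T_\cA$ to $T_\cA^+$ at the beginning of the section, ensuring that the exponential is well-defined and that $G^\bullet := \exp(T_\cA^{+\bullet}(X))$ is a cosimplicial prounipotent group.

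Having set the notation, applying Theorem~\ref{thm:mc} gives the equivalence
\[
\Del\big(\TW(\mathfrak g^\bullet)\big) \cong Z^1\big(\exp(\mathfrak g^\bullet)\big),
\]
which is exactly the claimed statement after substituting back $\mathfrak g^\bullet = T_\cA^{+\bullet}(X)$ on both sides. I would then add a brief remark interpreting the objects of $Z^1(G^\bullet)$ concretely as the geometric non-abelian $1$-cocycles $\{\Phi_{i_0 i_1}\}$ with $\Phi_{i_0 i_1}\in \Aut^+(\cA)|_{U_{i_0 i_1}}$ satisfying the cocycle condition $\Phi_{i_2 i_0}\circ \Phi_{i_1 i_2}\circ \Phi_{i_0 i_1} = \id$ on triple intersections, modulo the gauge action of $\Aut^+(\cA)$-valued $0$-cochains $(\Psi_{i_0})$, as already described in the text preceding the corollary. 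This makes transparent that the corollary is recording a deformation-theoretic consequence of Theorem~\ref{thm:mc}.

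There is essentially no obstacle here, since the corollary is a direct specialization of Theorem~\ref{thm:mc}. The only mild subtlety I would double-check is the commutation of $\TW$ with the global sections functor $\Gamma(X,-)$ on sheaves; this holds because $\TW$ is defined by a finite-type limit construction (an equalizer inside a product over simplices, each factor being a finite tensor product of polynomial forms with the given sheaf), and $\Gamma(X,-)$ preserves such limits. Once that identification is in place, the rest is formal.
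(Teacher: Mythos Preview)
Your proposal is correct and matches the paper's approach exactly: the paper simply sets $\mathfrak g^\bullet = T_\cA^{+\bullet}(X)$ and states that Theorem~\ref{thm:mc} implies the corollary, with the geometric interpretation of $Z^1(G^\bullet)$ spelled out in the preceding paragraph. Your additional verifications (that $\TW$ commutes with global sections and that $\mathfrak g^\bullet$ is indeed cosimplicial pronilpotent) are reasonable sanity checks that the paper leaves implicit.
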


\subsection{Deformation of Thom-Whitney resolutions}
Let $\left\{ \Phi_{i_0i_1}=\exp(T_{i_0i_1})\right\}$ be a non-abelian $1$-cocycle in $G^1$. Using $\Phi_{i_0i_1}$'s to glue local pieces $\cA\mid_{U_{i_0}}$ we get another sheaf of algebras which is locally isomorphic to $\cA$, but not globally. We denote this new sheaf of algebras by $\cB$. Since the sheaf $\cC^*_\TW(\cA)$ is a resolution of $\cA$, it is natural to ask how to deform this resolution to give a resolution of $\cB$.

By Corollary~\ref{cor:mc}, there is a Maurer-Cartan element $\theta$ in the differential graded Lie algebra $C^*_\TW(T^+_\cA)$ corresponding to the non-abelian $1$-cocycle $\Phi$. This element may be considered as an operator $Q$ acting on $C^*_\TW(\cA)$ thanks to the following Lemma.
\begin{lem}\label{lem:monoidal}
There is a morphism of differential graded Lie algebras
\[\alpha: C^*_\TW(T^+_\cA) \ra \Der\big( C^*_\TW(\cA), C^*_\TW(\cA)\big).\]
\end{lem}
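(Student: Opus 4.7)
The plan is to exploit the symmetric monoidal property of $\TW$ emphasized in the preceding subsection. At the sheaf level $T^+_\cA$ acts on $\cA$ by derivations, essentially by definition of the tangent Lie algebra. For a fixed open cover $\cU=\{U_i\}$, this action is preserved under all restrictions, so we obtain a level-wise cosimplicial action $\mu^\bullet:T^{+,\bullet}_{\cA,\cU}\otimes \cA^\bullet_\cU\to \cA^\bullet_\cU$ which is a morphism of cosimplicial sheaves, and at each level $n$ makes $T^{+,n}_{\cA,\cU}$ act by derivations on $\cA^n_\cU$.

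First I would upgrade $\mu^\bullet$ to a Thom-Whitney-level action. Concretely, given $x=\prod_n x_n\in C^*_\TW(T^+_\cA)$ with $x_n\in T^{+,n}_\cA(X)\otimes \Omega^*_{\Delta^n}$ and $a=\prod_n a_n\in C^*_\TW(\cA)$, I set $(x\cdot a)_n:=x_n\cdot a_n$ defined by Koszul convention: for $x_n=\xi\otimes\omega$ and $a_n=b\otimes\eta$, declare $x_n\cdot a_n=(-1)^{|\omega|\,|b|}(\xi\cdot b)\otimes(\omega\wedge\eta)$. This compatible family lies in $C^*_\TW(\cA)$ because each cosimplicial structure map $\partial_{n+1,k}$ and each $d^{n+1,k}$ is an algebra morphism commuting with the cosimplicial action, so the equalizer conditions defining the Thom-Whitney subcomplex are preserved.

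Next I would verify that $\alpha(x):a\mapsto x\cdot a$ lands in $\Der(C^*_\TW(\cA),C^*_\TW(\cA))$ and that $\alpha$ is a dg-Lie algebra morphism. The derivation property is checked level-wise: $\xi$ being a derivation of $\cA^n$ and $\omega\wedge(-)$ being a graded derivation of $\Omega^*_{\Delta^n}$ of degree $|\omega|$, the total Leibniz rule follows with the expected Koszul signs. Compatibility with the Thom-Whitney differential — the sum of the de Rham differential on $\Omega^*_{\Delta^n}$ and the internal differential — holds because $d_{dR}$ is a derivation of the form algebra and $\xi$ is compatible with the internal differentials of $\cA$ and $T^+_\cA$. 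Compatibility with brackets, $\alpha([x,y])=[\alpha(x),\alpha(y)]$, follows from the same level-wise statement for $\xi,\xi'$ and the graded commutativity of $\Omega^*_{\Delta^n}$. Abstractly, all of this is the statement that $\TW$, being symmetric monoidal, sends a cosimplicial Lie-algebra action by derivations on a cosimplicial commutative algebra to a dg-Lie-algebra action by dg-derivations on a dg-commutative algebra.

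The only delicate step is bookkeeping the Koszul signs introduced by the form degrees in the wedge product on $\Omega^*_{\Delta^n}$; however no new geometric input is needed beyond the formal monoidality of $\TW$ already invoked in the paper. A small clarification one should include is that the construction is independent (up to canonical isomorphism) of the chosen cover, so that passing to a common refinement allows one to talk about $\alpha$ intrinsically on $C^*_\TW(T^+_\cA)$ and $C^*_\TW(\cA)$.
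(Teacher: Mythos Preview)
Your proposal is correct and follows essentially the same approach as the paper: both use the symmetric monoidal structure of $\TW$ to turn the cosimplicial action of $T^+_\cA$ on $\cA$ by derivations into a dg-Lie algebra map $C^*_\TW(T^+_\cA)\to\Der\big(C^*_\TW(\cA)\big)$. The paper's proof is a three-line sketch (action of $\Der(\cA)$ on $\cA$ plus monoidality of $\TW$ gives a cochain map, hence $\alpha$; the Lie and derivation properties are declared a ``straightforward computation''), and what you have written is precisely that computation spelled out level-wise with the Koszul signs tracked. Your remark on cover-independence is extra and not needed for the statement as formulated, since the objects are defined relative to a fixed $\cU$.
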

\begin{proof}
Using that $\Der(\mathcal A)$ acts on $\mathcal A$ together with the monoidal structure of $\TW$, we get a cochain map 
$C^*_\TW(T^+_\cA)\otimes C^*_\TW(\cA) \ra C^*_\TW(\cA)$, which leads to 
$\alpha: C^*_\TW(T^+_\cA) \ra \End \big(C^*_\TW(\cA)\big)$. 
It is a straightforward computation to check that $\alpha$ is a Lie algebra map and that 
its image lies in derivations.
\end{proof}

We denote by $Q:=\alpha(\theta)$ the operator associated to $\theta$ corresponding to the non-abelian $1$-cocycle $\Phi$. Thus $Q$ is a Maurer-Cartan element of the differential graded Lie algebra $\Der(C^*_\TW(\cA))$, which implies that we can deform the differential $d$ on $C^*_\TW(\cA)$ to $d+Q$. Similarly, on the level of sheaves, we get a deformation of $\cC^*_\TW(\cA)$ whose differential will again be denoted by $d+Q$.

\begin{thm}\label{thm:taut}
There is a quasi-isomorphism
\[ \cB \ra \big(\cC^*_\TW(\cA), d+Q\big)\]
of sheaves of differential graded algebras.
\end{thm}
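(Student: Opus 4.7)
The plan is to exploit that on the cover $\cU$ the deformation is gauge-trivial, construct the map locally, and then glue. Fix $U_{i_0}$ and restrict the non-abelian $1$-cocycle $\{\Phi_{j_0j_1}\}$ to the induced cover $\{U_{j_0}\cap U_{i_0}\}_{j_0\in\cI}$. Trivialise it by the $0$-cochain $\Psi_{j_0}:=\Phi_{j_0i_0}^{-1}$; the cocycle relation on $\Phi$ makes this a legitimate trivialisation and produces an isomorphism of sheaves of algebras $\psi_{i_0}:\cB|_{U_{i_0}}\raiso\cA|_{U_{i_0}}$. By the functorial equivalence of Corollary~\ref{cor:mc}, the restricted Maurer-Cartan element $\theta|_{U_{i_0}}\in C^1_\TW(T^+_\cA|_{U_{i_0}})$ must therefore be Deligne gauge-equivalent to zero; pick $g_{i_0}\in C^0_\TW(T^+_\cA|_{U_{i_0}})$ with $e^{g_{i_0}}*\theta|_{U_{i_0}}=0$.

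By Lemma~\ref{lem:monoidal} the gauge $g_{i_0}$ acts as a derivation $\alpha(g_{i_0})$ of $\cC^*_\TW(\cA)|_{U_{i_0}}$. The standard compatibility between the Deligne gauge action on Maurer-Cartan elements and conjugation of twisted differentials in any DG-module gives the operator identity $\exp(\alpha(g_{i_0}))\circ d=(d+Q)\circ\exp(\alpha(g_{i_0}))$, so $\exp(\alpha(g_{i_0}))$ is an isomorphism of sheaves of graded algebras intertwining $d$ with $d+Q$. This yields a local quasi-isomorphism of sheaves of dg-algebras
$$
F_{i_0}:\,\cB|_{U_{i_0}}\xrightarrow{\psi_{i_0}}\cA|_{U_{i_0}}\hookrightarrow\big(\cC^*_\TW(\cA)|_{U_{i_0}},d\big)\xrightarrow{\exp(\alpha(g_{i_0}))}\big(\cC^*_\TW(\cA)|_{U_{i_0}},d+Q\big),
$$
whose middle arrow is the standard TW resolution of $\cA$ for the restricted cover.

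It remains to verify that the $F_{i_0}$'s agree on overlaps $U_{i_0i_1}$; once that is granted, the resulting global map $F:\cB\to(\cC^*_\TW(\cA),d+Q)$ is a quasi-isomorphism because each $F_{i_0}$ is and quasi-isomorphism of sheaves is a local property. The key observation is that the two local trivialisations $\Psi^{(i_0)}_{j_0}=\Phi_{j_0i_0}^{-1}$ and $\Psi^{(i_1)}_{j_0}=\Phi_{j_0i_1}^{-1}$ differ on $U_{i_0i_1}$ exactly by $\Phi_{i_0i_1}$, and by the naturality of the correspondence of Theorem~\ref{thm:mc} the gauges $g_{i_0}$ and $g_{i_1}$ are then Deligne-related on $U_{i_0i_1}$ by a $0$-cochain whose image under $\alpha$ exponentiates precisely to the transition realising the gluing of $\cB$. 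I expect the main obstacle to lie here: making this naturality quantitative requires tracking the explicit holonomy map $e:\MC_\bullet(\mathfrak g)\to N(G)$ from the proof of Theorem~\ref{thm:mc} under change of base-point in the cover, so as to verify the pointwise identity $\exp(\alpha(g_{i_0}))\circ\psi_{i_0}=\exp(\alpha(g_{i_1}))\circ\psi_{i_1}$ on $U_{i_0i_1}$. Should this direct calculation become unwieldy, an alternative is to recognise both $\cB$ and $(\cC^*_\TW(\cA),d+Q)$ as two instances of the same twist construction applied functorially to the unperturbed quasi-isomorphism $\cA\to\cC^*_\TW(\cA)$, so that $F$ is produced by functoriality of twisting and its being a quasi-isomorphism follows formally.
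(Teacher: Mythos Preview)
Your overall strategy---trivialise the cocycle locally, convert the gauge trivialisation into an isomorphism of twisted complexes, then glue---is exactly the paper's. Two points deserve correction, and one genuine gap remains at the gluing step.

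First, the gauge direction is off. From $e^{g_{i_0}}*\theta|_{U_{i_0}}=0$ one gets $e^{\alpha(g_{i_0})}(d+Q)e^{-\alpha(g_{i_0})}=d$, so $\exp(\alpha(g_{i_0}))$ intertwines $d+Q$ with $d$, not the other way round. You need either $-g_{i_0}$ or (as the paper does) to write the gauge as $e^{a_{i_0}}*0=\theta|_{U_{i_0}}$.

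Second, and more importantly, you cannot simply ``pick'' any gauge $g_{i_0}$ trivialising $\theta|_{U_{i_0}}$; different choices yield local maps $F_{i_0}$ that will not agree on overlaps. The paper takes the \emph{specific} gauge element $a_{i_0}$ produced by the equivalence of Corollary~\ref{cor:mc} (i.e.\ by the inverse of the holonomy map in Hinich's argument) applied to the $0$-cochain $\{\Phi_{i_0 j_0}\}_{j_0}$. The crucial fact, extracted from the proof of \cite[Theorem~4.1]{Hin}, is that this $a_{i_0}$ has degree-zero Thom--Whitney component $\prod_{j_0}T_{i_0 j_0}$. This explicit form is what makes the overlap computation tractable.

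Finally, the paper does not attempt to show the $F_{i_0}$ agree on the nose at the chain level, as you do. Instead it computes, via the explicit formulas for $I$ and $P$ and the known form of $a_{i_0}$, that the induced gluing map on cohomology is precisely $\Phi_{i_0 i_1}$. This identifies the cohomology sheaf of $(\cC^*_\TW(\cA),d+Q)$ with $\cB$; since the complex is non-negatively graded with cohomology concentrated in degree zero, the inclusion $\cB=H^0\hookrightarrow\cC^0_\TW$ furnishes the global quasi-isomorphism. Your ``main obstacle'' is real, but it is circumvented by weakening the target from chain-level agreement to cohomology-level agreement.
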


\begin{proof}
For each $i\in \cI$, we first show that the cohomology of $\big(\cC^*_\TW(\cA), d+Q\big)\mid_{U_i}$ is quasi-isomorphic to $\cA_i$. For this we apply Corollary~\ref{cor:mc} on $U_i$ using the covering $\left\{U_i\cap U_{i_0}\right\}_{i_0\in\cI}$. Observe that the $1$-cocycle $\Phi\mid_{U_i}$ is isomorphic to the trivial cocycle by the gauge transformation defined by the degree $0$ element $\left\{\Phi_{ii_0}\right\}_{i_0\in\cI}$. By Corollary~\ref{cor:mc} we conclude that the Maurer-Cartan section $\theta$ corresponding to $\Phi$, when restricted to $U_i$, is gauge equivalent to $0$ via a unique element $a_i\in \cC^0_\TW(T^+_\cA)(U_i)$ (corresponding to the gauge transformation $\left\{\Phi_{ii_0}\right\}_{i_0\in\cI}$) such that 
\[ e^{a_i}* (0)= \theta\mid_{U_i}.\]
Moreover the degree zero Thom-Whitney cochain $a_i$ is of the form $\big(\prod_{i_0\in \cI} T_{ii_0}, \cdots\big)$. This latter assertion follows from the proof of \cite[Theorem 4.1]{Hin}. 
Via the representation $\alpha$ we get an isomorphism of complexes of sheaves
\[ \big(\cC^*_\TW(\cA), d\big)\mid_{U_i} \stackrel{\alpha(e^{a_i})}{\longrightarrow}\big(\cC^*_\TW(\cA), d+Q\big)\mid_{U_i}.\]
The complex on the left hand side is quasi-isomorphic to $\cA_i$ via composition
\[\begin{CD}
\cA_i @>>> \cC^*(\cA)\mid_{U_{i}} @> I>> \big(\cC^*_\TW(\cA), d\big)\mid_{U_{i}}.
\end{CD}\]
Let $U_i$ and $U_j$ be two open subsets in the covering $\cU$. we consider the following diagram of maps on the intersection $U_{ij}$

\[ \begin{CD}
\cA\mid_{U_{ij}} @>>> \cC^*(\cA)\mid_{U_{ij}} @> I>> \big(\cC^*_\TW(\cA), d\big)\mid_{U_{ij}}  @> \alpha(e^{a_i})>>\big(\cC^*_\TW(\cA), d+Q\big)\mid_{U_{ij}} \\
@. @. @.     @V\id VV\\
\cA\mid_{U_{ij}} @>>> \cC^*(\cA)\mid_{U_{ij}} @<P<< \big(\cC^*_\TW(\cA), d\big)\mid_{U_{ij}}   @< \alpha(e^{-a_j})<< \big(\cC^*_\TW(\cA), d+Q\big)\mid_{U_{ij}} .
\end{CD}\]
We can use this diagram to calculate the induced gluing map on the cohomology of the sheaf $\big(\cC^*_\TW(\cA), d+Q\big)$. Using explicit formulas for $I$, $P$, and the fact that $a_i$ is of the form $\big(\prod_{i_0\in \cI} T_{ii_0}, \cdots\big)$, we conclude that this gluing map is $\Phi_{ij}$. The proof is complete.
\end{proof}

\section{Obstructions}\label{sec:homotopy}

In Section~\ref{sec:dg} we see that the resolution $T_{\tilde{X}/Y}$ of $N[-1]$ carries a differential graded Lie algebroid structure. In this section, we present a second construction of certain Lie algebroid structure on $N[-1]$ associated to a closed embedding $i:X\hookrightarrow Y$ of smooth algebraic varieties. 
In spirit this construction might be thought of as obtained from the one of Section \ref{sec:dg} by applying homological transfer technique. 
Indeed we shall end up with a strong homotopy Lie (or $L_\infty$-) algebroid structure on $N[-1]$.

\subsection{$X^{(\infty)}_Y$ versus $X^{(\infty)}_{N}$}\label{subsec:vs}

Let $k$ be either a positive integer or $\infty$, and denote by $X_Y^{(k)}$ the $k$-th infinitesimal neighbourhood of $X$ into $Y$, where by convention $X^{(\infty)}_Y:=\underrightarrow{\lim}\,X_Y^{(k)}$. Similarly, we have $X^{(\infty)}_N$, the formal neighborhood of $X$ in the normal bundle.
We borrow the notations from Subsection \ref{subsec-2.3}. Notice in particular that $\cO_X=\cO_Y/\mathcal J$ and 
$N^\chk=\mathcal J/\mathcal J^2$. 

Since the underlying topological space of the formal scheme $X^{(\infty)}_Y$ is just $X$, its structure sheaf $\cO_X^{(\infty)}$ gives a sheaf of commutative algebras on $X$ (which is not necessarily an $\cO_X$-algebra, or not even an $\cO_X$-module). Similarly, the structure sheaf of the formal scheme $X_N^{(\infty)}$ gives another sheaf of commutative algebras on $X$ which is just $\widehat{\bS}_{\cO_X}(N^\chk)$. This latter algebra is canonically an $\cO_X$-algebra via the splitting map $X_N^{(\infty)}\rightarrow N \rightarrow X$. 

We would like to compare the two sheaves of algebras $\cO_X^{(\infty)}$ and $\widehat{\bS}_{\cO_X}(N^\chk)$. The first algebra $\cO_X^{(\infty)}$ is simply the $\mathcal J$-adic completion of $\cO_Y$. Namely, there is a decreasing filtration on $\cO_Y$ given by
\[ \cO_Y={\cal J} \supset {\cal J}\supset {\cal J}^2 \supset {\cal J}^3 \supset \cdots.\]
The sheaf of algebras $\cO_X^{(\infty)}$ is, by definition, the inverse limit $\underleftarrow{\lim}\, \cO_Y/ {\cal J}^k$. On the other hand, the direct product of the associated graded components
\[ \prod_{k=0}^{\infty} {\cal J}^k/{\cal J}^{k+1}\]
inherits another commutative algebra structure from $\cO_Y$. Furthermore, by the smoothness of $X$ and $Y$, it is well-known that we have a canonical isomorphism of algebras
\begin{equation}~\label{iso}
\widehat{\bS}_{\cO_X}(N^\chk) \cong \prod_{k=0}^\infty {\cal J}^k/{\cal J}^{k+1}.
\end{equation}

\begin{lem}\label{lem-useful}
Assume that $\Phi:  \cO_X^{(\infty)}\ra \widehat{\bS}_{\cO_X}(N^\chk)$ is a filtered isomorphism of sheaves of algebras. Then the associated graded homomorphism of $\Phi$ is given by~\ref{iso} if and only if ${\rm gr}^{\leq 1} \Phi$ is the canonical isomorphism ${\rm gr}\big(\cO_X^{(1)}\big)\longrightarrow \cO_X\oplus N^\chk$.
\end{lem}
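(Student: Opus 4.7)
The plan is to reduce the equivalence to a near-tautology about graded endomorphisms of the symmetric algebra. First, I would pass to associated gradeds with respect to the given filtrations. The target $\widehat{\bS}_{\cO_X}(N^\chk)$ is already graded, so its associated graded is tautologically $\bS_{\cO_X}(N^\chk)$. For the source, the canonical isomorphism~\eqref{iso} identifies $\mathrm{gr}(\cO_X^{(\infty)})=\prod_k \mathcal J^k/\mathcal J^{k+1}$ with $\bS_{\cO_X}(N^\chk)$ as well. Under these two identifications, $\mathrm{gr}(\Phi)$ becomes a graded $\mathbf{k}$-algebra endomorphism $\varphi$ of $\bS_{\cO_X}(N^\chk)$, and the statement to prove becomes: $\varphi=\id$ if and only if its degree-zero and degree-one components $\varphi^{(0)}:\cO_X\to\cO_X$ and $\varphi^{(1)}:N^\chk\to N^\chk$ are both the identity.

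The $(\Rightarrow)$ direction will be trivial by restriction. For $(\Leftarrow)$, I would first highlight that the identity $\varphi^{(0)}=\id_{\cO_X}$ is the crucial upgrade: it promotes $\varphi$ from a graded $\mathbf{k}$-algebra endomorphism to a graded $\cO_X$-algebra endomorphism. The key step is then to invoke the universal property of the symmetric algebra: $\bS_{\cO_X}(N^\chk)$ is the free graded commutative $\cO_X$-algebra on $N^\chk$ placed in degree one, so any graded $\cO_X$-algebra endomorphism is uniquely determined by its restriction to $N^\chk$. Since $\varphi^{(1)}=\id_{N^\chk}$ by hypothesis, one concludes $\varphi=\id$ in every degree.

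The main subtlety to watch for is the interplay between the $\mathbf{k}$-linearity that $\Phi$ is guaranteed to satisfy (since it is only assumed to be an algebra morphism of sheaves) and the $\cO_X$-linearity needed to apply the universal property of $\bS_{\cO_X}(-)$. This resolves itself precisely by extracting the degree-zero statement $\varphi^{(0)}=\id_{\cO_X}$ from the canonical identification $\mathrm{gr}(\cO_X^{(1)})\to \cO_X\oplus N^\chk$; once this bridge is in place the argument is entirely formal and no further computation is required beyond the freeness of the symmetric algebra.
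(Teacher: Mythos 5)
Your argument is correct and is essentially the paper's own proof, which simply observes that $\widehat{\bS}_{\cO_X}(N^\chk)$ is generated as an algebra by $\cO_X$ and $N^\chk$, so a (graded) algebra homomorphism is determined in degrees $\le 1$; your write-up just makes explicit the passage to associated gradeds and the identification via~\eqref{iso}. (Minor remark: for the uniqueness you only need that $N^\chk$ generates over $\cO_X$, not the full freeness of the symmetric algebra.)
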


\begin{proof}
This follows from that as an algebra, $\widehat{\bS}_{\cO_X}(N^\chk)$ is generated by $\cO_X$ and $N^\chk$.
\end{proof}

\subsubsection{Affine case}

\begin{lem}~\label{local}
Assume that $Y=\Spec R$ and $X=\Spec R/I$ are two smooth affine spaces. Then there exists a filtered algebra isomorphism
\[ \Phi: \cO_X^{(\infty)}\ra \widehat{\bS}_{\cO_X}(N^\chk) \]
which, after taking the associated graded operation, agrees with the canonical isomorphism~\ref{iso}.
\end{lem}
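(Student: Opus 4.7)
The plan is to exploit the formal smoothness of $R/I$, which follows from smoothness of $X$ in characteristic zero, in order to produce an algebra-theoretic section of the projection $\widehat{R} \twoheadrightarrow R/I$, where $\widehat{R}$ denotes the $I$-adic completion. Once such a section is in hand, the projectivity of $N^\chk = I/I^2$ as an $R/I$-module will allow us to extend it to an isomorphism out of $\widehat{\bS}_{R/I}(N^\chk)$.

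First I would construct a $\bbk$-algebra section $\sigma:R/I\to \widehat{R}=\cO_X^{(\infty)}$ of the canonical surjection. Since $R/I$ is formally smooth over $\bbk$, we can inductively lift the identity map $R/I\to R/I$ through the tower of square-zero extensions $\cdots\twoheadrightarrow R/I^{n+2}\twoheadrightarrow R/I^{n+1}\twoheadrightarrow\cdots\twoheadrightarrow R/I$, obtaining a compatible family of sections $\sigma_n:R/I\to R/I^{n+1}$. Passing to the inverse limit yields the required $\sigma$, which equips $\widehat{R}$ with the structure of an $R/I$-algebra.

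Next, I would observe that via $\sigma$ the projection $\widehat{I}\to \widehat{I}/\widehat{I}^2 = I/I^2 = N^\chk$ is $R/I$-linear, since for $r\in R/I$ and $x\in \widehat{I}$ the class of $\sigma(r)x$ modulo $\widehat{I}^2$ is $r\cdot\bar{x}$. As $N^\chk$ is projective over $R/I$ (both $X$ and $Y$ are smooth, so the embedding is regular), this surjection admits an $R/I$-linear section $\tau:N^\chk\to\widehat{I}\subset\widehat{R}$. The universal property of the symmetric algebra then extends $\tau$ to an $R/I$-algebra morphism $\bS_{R/I}(N^\chk)\to\widehat{R}$; because $\tau(N^\chk)\subset\widehat{I}$ this map is filtered with respect to the augmentation and $I$-adic filtrations, and therefore uniquely extends on completions to a filtered $\bbk$-algebra map
\[ \Phi:\widehat{\bS}_{R/I}(N^\chk)\longrightarrow \cO_X^{(\infty)}. \]
Finally I would verify that $\Phi$ is an isomorphism. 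By construction the induced map on the degree-one piece of the associated graded is the identity on $N^\chk$, so Lemma~\ref{lem-useful} applies and $\mathrm{gr}\,\Phi$ coincides with the canonical isomorphism \eqref{iso}; in particular $\mathrm{gr}\,\Phi$ is an isomorphism. As both source and target are complete and separated for their filtrations, $\Phi$ itself is a filtered isomorphism, as desired.

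The main obstacle is the construction of the algebra section $\sigma$: this is where smoothness of $X$ is crucially used, and one must handle carefully the fact that $\widehat{R}\twoheadrightarrow R/I$ is only a pro-nilpotent extension rather than an honest nilpotent one, so formal smoothness must be applied level by level together with the Mittag-Leffler property of the tower. Everything else is essentially formal once this step is secured.
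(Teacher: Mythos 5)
Your proposal is correct and follows essentially the same route as the paper: construct an algebra section $R/I\to\widehat{R}$ by formal smoothness applied level by level to the square-zero extensions $R/I^{k+1}\to R/I^k$, use projectivity of $I/I^2$ to split $\widehat{I}\to I/I^2$ $R/I$-linearly, extend by the universal property of the symmetric algebra, and conclude by passing to the associated graded (invoking Lemma~\ref{lem-useful}). The only cosmetic difference is that your $\Phi$ goes from $\widehat{\bS}_{R/I}(N^\chk)$ to $\cO_X^{(\infty)}$ rather than the other way, exactly as the paper's $\Psi$, whose inverse is then the desired map.
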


\begin{proof}
First, we prove there exists an algebra splitting of the canonical quotient map $\cO_X^{(\infty)}=\underleftarrow{\lim}\, \cO_Y/ {\cal J}^k \ra \cO_Y/{\cal J}=\cO_X$. Under the affineness condition, this is equivalent to construct a splitting
\[ s: R/I \ra \underleftarrow{\lim}\, R/I^k.\]
For $k=1$, there is the identity morphism $s_1: R/I\ra R/I$. In general, we consider the lifting problem
\[\begin{xy} 
(0,0)*{R/I}="A"; (30,0)*{R/I^{k}}="B"; (30,30)*{R/I^{k+1}}="C";
{\ar@{->}^{s_k} "A"; "B"};
{\ar@{->}^{} "C"; "B"};
{\ar@{.>}^{s_{k+1}} "A"; "C"};
\end{xy}\]
Since the kernel of the vertical map is $I^k/I^{k+1}$ which squares to zero, by Grothendieck's formal smoothness criterion, we conclude the required lifting $s_{k+1}$ exists. Taking inverse limit $s=\underleftarrow{\lim}\, s_k$ gives the splitting we wanted.

Now let $\widehat{I}$ be the $I$-adic completion of itself in the algebra $\widehat{R}=\underleftarrow{\lim}\, R/I^k$. Via the splitting $s$, the algebra $\widehat{R}$ is endowed with a $R/I$-modules structure, which implies that $\widehat{I}$ also has a $R/I$-module structure. The canonical surjection
\[ \widehat{I}=\underleftarrow{\lim}\, I/I^k \rightarrow I/I^2 \ra 0\]
also admits a splitting as $R/I$-modules because $I/I^2$ is locally free, hence projective $R/I$-module. Let $t: I/I^2 \ra \widehat{I}$ be such a splitting. By the universal property of symmetric algebras, we get an algebra homomorphism
\[ \Psi: \widehat{\bS}_{R/I} (I/I^2) \ra \widehat{R}.\]
Passing to associated graded proves that $\Psi$ is an isomorphism. 

Finally, setting $\Phi=\Psi^{-1}$ proves the lemma. Note that by Lemma~\ref{lem-useful}, the splitting properties of $s$ and $t$ imply that the associated graded homomorphism of $\Phi$ is just~\ref{iso}.
\end{proof}

\subsubsection{Back the general case}
In the general situation of a closed embedding $i: X\hookrightarrow Y$ of smooth projective varieties. Let $U_{i_0}$ be an affine open subset of $X$. By Lemma~\ref{local}, there exists an algebra isomorphism
$$
\Phi_{i_0}: \cO_X^{(\infty)}\mid_{U_{i_0}} \longrightarrow \widehat{\bS}_{\cO_X}\big(N^\chk\big)\mid_{U_{i_0}}
$$
whose associated graded morphism is the canonical isomorphism~\ref{iso}. 

Let $U_{i_0}$ and $U_{i_1}$ be two such open subsets, and denote by $\Phi_{i_0}$ and $\Phi_{i_1}$ the associated isomorphisms. 
On the intersection $U_{i_0i_1}$, we get an automorphism
$$
\Phi_{i_0i_1}:=\Phi_{i_1} \circ \Phi_{i_0}^{-1} : 
\widehat{\bS}_{\cO_X}\big(N^\chk\big)\mid_{U_{i_0i_1}}\longrightarrow 
\widehat{\bS}_{\cO_X}\big(N^\chk\big)\mid_{U_{i_0i_1}}
$$
It moreover has the property that ${\rm gr}^{\leq1}\big(\Phi_{i_0i_1}\big)$, which is an automorphism of 
$\cO_X\oplus N^\chk$, is the identity. 

\medskip
Let $\mathcal U:=\{ U_{i_0} \} _{i_0\in \cI}$ be an open affine covering of $X$, so that we have local trivializations $\Phi_{i_0}\;(i_0\in\cI)$. 
Then the collection of isomorphisms $\{\Phi_{i_0i_1}\}_{(i_0,i_1)\in\cI^2}$ forms a non-abelian $1$-cocycle of the sheaf of groups 
$\Aut^+\left(\widehat{\bS}_{\cO_X}(N^\chk)\right)$ consisting of filtered automorphisms $\Phi$ of $\widehat{\bS}_{\cO_X}(N^\chk)$ 
such that ${\rm gr}^{\leq1}\big(\Phi\big)=id$.

\medskip

Let us denote by $\Der^+\left(\widehat{\bS}_{\cO_X}(N^\chk)\right)$ the sheaf of pronilpotent Lie algebras consisting of continuous derivations $\theta$ 
of $\widehat{\bS}_{\cO_X}(N^\chk)$ satisfying ${\rm gr}^{\leq1}\big(\theta\big)=0$. The exponential map 
$$
\exp:\Der^+\left(\widehat{\bS}_{\cO_X}(N^\chk)\right)\longrightarrow\Aut^+\left(\widehat{\bS}_{\cO_X}(N^\chk)\right)
$$
is an isomorphism.

\subsection{Homotopy Lie algebroid structure on $N[-1]$}

Let $X$ be a topological space, and let $\mathcal U$ be an open covering of $X$. For a sheaf of abelian groups $F$ on $X$, we denote by $C^*_\TW(F)$ the Thom-Whitney resolution of $F$ associated to the covering $\mathcal U$. Basic properties of the functor $C^*_\TW$ are recalled in the Section~\ref{sec:deform}. 

\begin{defi}\label{def:lie}
Let $E$ be a finitely generated locally free sheaf on a smooth algebraic variety $X$. 
An {\it $L_\infty$-algebroid} structure on $E[-1]$ is a ${\bf k}$-linear filtered derivation $Q$ of degree one on the differential graded 
algebra $\hat{\bS}_{C^*_{\TW}(\cO_X)}(C^*_{\TW}(E^\chk))$ such that $(d_{\TW}+Q)^2=0$, where $d_{\TW}$ is the original differential. \\
\indent It is called {\it minimal} if ${\rm gr}^{\leq1}(Q)=0$. 
\end{defi}

\noindent In other words, an $L_\infty$-algebroid structure is a Maurer-Cartan element $Q$ in 
$\Der\Big(\hat{\bS}_{C^*_{\TW}(\cO_X)}\big(C^*_{\TW}(E^\chk)\big)\Big)$. In the following we shall only consider minimal $L_\infty$-algebroids which are Maurer-Cartan elements in $\Der^+\Big(\hat{\bS}_{C^*_{\TW}(\cO_X)}\big(C^*_{\TW}(E^\chk)\big)\Big)$. 

Since $Q$ is a ${\bf k}$-linear derivation, it is uniquely determined by its restriction to the subspaces $\cC^*_\TW(\cO_X)$ and $\cC^*_\TW(E^\chk)$. On these subspaces $Q$ is the direct product of the following maps
\begin{align*}
a_k : \cC^*_\TW(\cO_X) & \ra \big(\cC^*_\TW(E^\chk)\big)^k \mbox{\;\; and \;\;}\\
l_k: \cC^*_\TW(E^\chk) &\ra \big(\cC^*_\TW(E^\chk)\big)^k.
\end{align*}
These maps are both of homological degree one. Just like $l_k's$ may be thought of as higher Lie brackets, the maps $a_k's$ may be viewed as the homotopy version of the anchor map.  The minimality condition implies that $a_0=0$, $l_0=l_1=0$. Thus they induce an actual Lie algebroid structure on the pair $(\mathcal O_X,E[-1])$ in ${\b D}({\bf k}_X)$~\footnote{It coincides with the one induced from $T_{\widetilde{X}/Y}$.}.The equation $(d_\TW+Q)^2=0$ implies a system of quadratic relations among $a_k$ and $l_k$'s. The first few of them are
\begin{align}~\label{compatible}
\begin{split}
&[d_\TW, a_1]=0\\
&[d_\TW,a_2]+l_2\circ a_1 =0 \\
&[d_\TW,l_2]=0\\
&[d_\TW,l_3]+(l_2\otimes\id+\id\otimes\l_2)\circ\l_2=0
\end{split}
\end{align}
The first and the third equations imply that both $a_1$ and $l_2$ are cochain morphisms. 

We also observe that there are more structure relations due to the fact that $Q$ is a derivation. This gives, for each $k\geq 1$, the equation
\begin{equation}~\label{compatible2}
 [l_k, e(f)]= e(a_{k-1}(f))
\end{equation}
where $e(-)$ is the operator given by multiplication by $-$. The following lemma is a direct consequence of the above identities.

\begin{lem}
\label{lem:dlie}
Let $E[-1]$ be a $L_\infty$ Lie algebroid over $X$ such that $a_1$ and $a_2$ vanish. Then $E[-1]$ is a Lie algebra object in the symmetric monoidal category $\D(X)$.
\end{lem}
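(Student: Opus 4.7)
The plan is to unpack the relations (\ref{compatible}) and (\ref{compatible2}) and show that, under the hypotheses $a_1=0$ and $a_2=0$, the second structure map $l_2$ gives a morphism in $\D(X)$ whose Jacobiator is explicitly null-homotopic via $l_3$ in a way that respects the $\cO_X$-linear structure. Since $E$ is finitely generated locally free it is dualizable in $\D(X)$, so encoding the bracket on $E[-1]$ as a co-bracket on $E^\chk$ (shifted appropriately) is harmless throughout.

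First I would establish $\cO_X$-linearity of $l_2$. Specializing (\ref{compatible2}) to $k=2$ gives $[l_2,e(f)]=e(a_1(f))$, so the hypothesis $a_1=0$ means $l_2$ commutes with multiplication by any local section $f$ of $\cO_X$. As $l_2$ is a priori only $\bbk$-linear, this is precisely what is needed to promote it to a $\cC^*_\TW(\cO_X)$-linear map $\cC^*_\TW(E^\chk)\to \cC^*_\TW(E^\chk)\otimes_{\cC^*_\TW(\cO_X)}\cC^*_\TW(E^\chk)$. Combined with the third identity of (\ref{compatible}), which reads $[d_\TW,l_2]=0$, we see $l_2$ is a chain map between complexes of $\cC^*_\TW(\cO_X)$-modules. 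Because $\cC^*_\TW(\cO_X)\to \cO_X$ is a quasi-isomorphism, $l_2$ descends to a morphism in $\D(X)$; dualizing using $(E[-1])^\chk=E^\chk[1]$ and the perfectness of $E$ produces the desired bracket $[\cdot,\cdot]:E[-1]\otimes_{\cO_X}^{\bL}E[-1]\to E[-1]$ in $\D(X)$. Its graded antisymmetry (in the unshifted picture) is automatic, since $Q$ is a derivation of a graded-symmetric algebra and the Koszul signs force the dual of $l_2$ to have the correct symmetry.

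Next I would verify the Jacobi identity. Applying (\ref{compatible2}) with $k=3$ yields $[l_3,e(f)]=e(a_2(f))$, so $a_2=0$ forces $l_3$ to be $\cC^*_\TW(\cO_X)$-linear as well. The fourth identity of (\ref{compatible}) is $[d_\TW,l_3]=-(l_2\otimes\id+\id\otimes l_2)\circ l_2$; up to the standard signs, the right-hand side is exactly the co-Jacobiator of $l_2$, i.e.\ the dual of the Jacobiator of $[\cdot,\cdot]$. Thus $l_3$ provides a null-homotopy for the Jacobiator inside the category of $\cC^*_\TW(\cO_X)$-modules, and $\cO_X$-linearity of both $l_2$ and $l_3$ ensures that this homotopy persists after descending to $\D(X)$. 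Hence $[\cdot,\cdot]$ satisfies the Jacobi identity as an honest equality in $\D(X)$, and $(E[-1],[\cdot,\cdot])$ is a Lie algebra object there.

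The main obstacle I anticipate is a careful sign/convention bookkeeping: the structure maps $l_k$ live on the Chevalley–Eilenberg side (they are co-brackets on $E^\chk$), while the conclusion of the lemma is phrased on $E[-1]$. Translating between the two requires tracking the shift $(E[-1])^\chk=E^\chk[1]$ and the corresponding exchange of graded symmetry for graded antisymmetry under d\'ecalage, and then recognizing $(l_2\otimes\id+\id\otimes l_2)\circ l_2$ as genuinely dual to the Jacobiator. Once this dictionary is in place, the argument is essentially the identities listed in (\ref{compatible}) and (\ref{compatible2}) read off in the correct order; no further input beyond $a_1=a_2=0$ (and the minimality assumptions built into Definition~\ref{def:lie}) is required.
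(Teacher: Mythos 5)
Your proof is correct and follows the paper's argument essentially verbatim: use $[l_2,e(f)]=e(a_1(f))$ and $[l_3,e(f)]=e(a_2(f))$ with the vanishing of $a_1,a_2$ to get $\cC^*_\TW(\cO_X)$-linearity of $l_2$ and $l_3$, then read the fourth identity of (\ref{compatible}) as a null-homotopy of the (dual) Jacobiator. The extra care you take with d\'ecalage and sign bookkeeping is a welcome elaboration but does not change the route.
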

\begin{proof}
By the equation $[l_2, e(f)]=e(a_1(f))$, we see that the vanishing of $a_1$ implies that $l_2$ is $\cC^*_\TW(\cO_X) $-linear. Thus the dual of the map $l_2$ gives rise to a morphism  
\[ \Lambda^2(E[-1]) \ra E[-1]\]
in $\D(X)$. Next we prove that $a_2=0$ implies that this morphism satisfies the Jacobi identity. Indeed, the vanishing of $a_2$ implies that 
\[ [l_3,e(f)]=e(a_2(f))=0,\]
which proves that $l_3$ is also $\cC^*_\TW(\cO_X) $-linear. By the fourth equation in~\ref{compatible}, this furthermore implies the Jacobi identity .
\end{proof}

Later we will be able to give a geometric interpretation for the vanishing of $a_1$ and $a_2$ in the case of $N[-1]$ associated to $i:X\hookrightarrow Y$, see Proposition~\ref{prop:obs1}.
\begin{prop}\label{coro:main}
Let $i: X\hookrightarrow Y$ be a closed embedding of smooth algebraic varieties. 
Then there is a minimal $L_\infty$-algebroid structure on $N[-1]$ such that its Chevalley-Eilenberg algebras is quasi-isomorphic to $\cO^{(\infty)}_X$.
\end{prop}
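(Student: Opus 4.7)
The plan is to apply the deformation-theoretic machinery of Section~\ref{sec:deform} to the non-abelian $1$-cocycle $\{\Phi_{i_0 i_1}\}$ constructed in Subsection~\ref{subsec:vs}. By construction of the local trivializations $\Phi_{i_0}$, the sheaf $\cO_X^{(\infty)}$ is obtained from local copies of $\cA := \widehat{\bS}_{\cO_X}(N^\chk)$ by gluing along the transition automorphisms $\Phi_{i_0 i_1} \in \Aut^+(\cA)$. Thus $\cO_X^{(\infty)}$ plays the role of the deformed sheaf $\cB$ in Theorem~\ref{thm:taut} applied to $\cA$ and the cocycle $\Phi$.

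First I would apply Corollary~\ref{cor:mc} to the cosimplicial pronilpotent Lie algebra obtained by evaluating $\Der^+(\cA)$ on the cosimplicial nerve of the affine cover $\mathcal U$. This produces a Maurer-Cartan element $\theta \in C^*_\TW\bigl(\Der^+(\cA)\bigr)$ corresponding to the cocycle $\{\Phi_{i_0 i_1}\}$. Using the representation $\alpha$ of Lemma~\ref{lem:monoidal}, together with the symmetric monoidal property of $\TW$ which identifies $C^*_\TW(\cA)$ with $\hat{\bS}_{C^*_\TW(\cO_X)}\bigl(C^*_\TW(N^\chk)\bigr)$, I obtain a Maurer-Cartan derivation
$$Q := \alpha(\theta) \in \Der^+\Bigl(\hat{\bS}_{C^*_\TW(\cO_X)}\bigl(C^*_\TW(N^\chk)\bigr)\Bigr).$$
By Definition~\ref{def:lie}, this $Q$ is precisely a minimal $L_\infty$-algebroid structure on $N[-1]$; minimality holds because $\theta$ takes values in $\Der^+$ and $\alpha$ preserves the filtration by construction.

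To identify the Chevalley-Eilenberg algebra, I observe that by definition
$$C^*(N[-1]) = \bigl(\hat{\bS}_{C^*_\TW(\cO_X)}\bigl(C^*_\TW(N^\chk)\bigr),\, d_\TW + Q\bigr),$$
which under the monoidal identification coincides with the deformed Thom-Whitney resolution $\bigl(C^*_\TW(\cA),\, d_\TW + Q\bigr)$. Theorem~\ref{thm:taut} then provides a quasi-isomorphism of sheaves of dg-algebras from $\cB$ to this complex, and the opening observation identifies $\cB$ with $\cO_X^{(\infty)}$, yielding the desired quasi-isomorphism.

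The main obstacle I anticipate is the bookkeeping of the interaction between the Thom-Whitney functor and the formal completion in $\widehat{\bS}$: the monoidality of $\TW$ immediately yields $C^*_\TW(\bS_{\cO_X}(N^\chk)) \cong \bS_{C^*_\TW(\cO_X)}(C^*_\TW(N^\chk))$, but one must verify that this extends naturally to the completions on both sides, and that $\alpha$ restricts cleanly to the pronilpotent piece $\Der^+$ so that the minimality of the resulting $L_\infty$-algebroid is preserved. With these compatibilities in place, the proposition amounts to an assembly of Corollary~\ref{cor:mc} and Theorem~\ref{thm:taut}.
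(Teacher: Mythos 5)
Your proposal is correct and follows essentially the same route as the paper: the paper's proof is precisely an application of Theorem~\ref{thm:taut} with $\cA=\hat{\bS}_{\cO_X}(N^\chk)$, $T^+_\cA=\Der^+\big(\hat{\bS}_{\cO_X}N^\chk\big)$, $\cB=\cO_X^{(\infty)}$ and the cocycle $\Phi$ from Subsection~\ref{subsec:vs}. The intermediate steps you spell out (Corollary~\ref{cor:mc}, the map $\alpha$ of Lemma~\ref{lem:monoidal}, and the monoidal identification of $C^*_\TW(\cA)$ with the Chevalley--Eilenberg complex) are exactly the ingredients already packaged inside the proof of Theorem~\ref{thm:taut}, so your writeup is just a more explicit unwinding of the same argument.
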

\begin{proof}
We apply Theorem~\ref{thm:taut} in the case $\cA=\hat{\bS}_{\cO_X} N^\chk$, $T_\cA^+=\Der^+\big(\hat{\bS}_{\cO_X} N^\chk\big)$, $\cB=\cO^{(\infty)}_X$, and the $1$-cocycle $\Phi$ constructed in the previous subsection.
\end{proof}

\begin{rem}
The cocycles $a_1$ associated to the $L_\infty$-algebroid structure on $N[-1]$ defines a cohomology class
\[[a_1]\in \Ext^1(\Omega_X, N^\chk)=\Ext^1(N,T_X).\]
Geometrically, the class $[a_1]$ corresponds to the extension class of the short exact sequence
\[ 0\ra T_X \ra T_Y\mid_X \ra N \ra 0.\]
Assuming that $a_1=0$, by equation~\ref{compatible2} we deduce that $l_2$ defines a class
\[ [l_2]\in \Ext^1(N^\chk, S^2 N^\chk)=\Ext^1(S^2N, N).\]
This class is given by the extension class of the sequence
\[ 0\ra {\cal J}^2/{\cal J}^3 \ra {\cal J}/{\cal J}^3 \ra {\cal J}/{\cal J}^2 \ra 0.\]
Note that since $a_1=0$, the tangent sequence above splits, which is equivalent to an algebra splitting $s_1: \cO_X \ra \cO_X^{(1)}$ of the first order neighborhood. Via this splitting, the sheaf ${\cal J}/{\cal J}^3$, {\sl a priori} an $\cO_X^{(1)}$-module, maybe viewed as an $\cO_X$-module. Alternatively, fixing a splitting of the tangent sequence, we get an isomorphism
\[ T_Y\mid_X \cong T_X\oplus N.\]
The extension class $[l_2]$ is simply the normal component of the Atiyah class of $T_Y$ pulled back to $X$.
\end{rem}

\subsection{Obstructions}
The structure maps of the $L_\infty$ algebroid $N[-1]$ can be used to describe various cohomology obstructions when comparing $X^{(k)}_N$ with $X^{(k)}_Y$. Let us set
\[\cF:=\Big(\hat{\bS}_{\cC^*_\TW(\cO_X)}\big(\cC^*_{\TW}(N^\chk)\big), d_\TW+Q\Big)\]
the Chevalley-Eilenberg algebra of $N[-1]$ defined in Proposition~\ref{coro:main}. 
Observe that since $\rm gr(\Phi_{i_0i_1})=0$, the operator $Q$ preserves the augmentation ideal $\cF^+$ consisting of symmetric tensors of degree at least $1$. Thus $\cF^+$ is a differential ideal in $\cF$. We define a quotient differential graded algebra $\cF^{(k)}:= \cF/(\cF^+)^{k+1}$ for each $k\geq 0$. By Theorem~\ref{thm:taut} we have a quasi-isomorphism
\[ \cO_{X/Y}^{(k)} \ra \cF^{(k)}.\]

\subsubsection{Splittings of $X\hookrightarrow X^{(k)}_Y$}

Let $s_k: X_Y^{(k)} \ra X$ be a splitting of the embedding $X\hookrightarrow X^{(k)}_Y$. We again use $s_k: \cO_X \ra \cO^{(k)}_X$ to denote the corresponding morphism on rings. In this case when forming the cocycle $\Phi$ we can further require that $\Phi_{i_0}$ to fill the following commutative diagram
\[\begin{CD}
\cO_X^{(k)} \mid_{U_{i_0}} @> \Phi^{(k)}_{i_0} >> \bS^{\leq k}_{\cO_X} N^\chk \mid_{U_{i_0}} \\
@A s_k AA    @AAA\\
\cO_X\mid_{U_{i_0}} @=  \cO_X\mid_{U_{i_0}}
\end{CD}\]
where the right vertical arrow is the obvious inclusion map. This restriction implies that the corresponding derivations $T_{i_0i_1}$ is contained in the subset of $\Der^+\big( \hat{\bS}_{\cO_X} N^\chk\big)$ consisting of derivations such that the component
\[ \cO_X \ra \bS^i N^\chk \]
vanishes for all $ 0\leq i\leq k$. Observe that this is a Lie subalgebra due to the minimality condition. We shall call such a $1$-cocycle $\Phi$ compatible with $s_k$.

If $\Phi$ is a $1$-cocycle compatible with $s_k$, then we have a corresponding homotopy $L_\infty$ algebra structure on $N[-1]$. Recall from Lemma~\ref{lem:monoidal} that the operator $Q$ is the image of a Maurer-Cartan element $\theta$ under the morphism
\[ \alpha: C^*_\TW\Big(\Der\big( \hat{\bS}_{\cO_X} N^\chk\big)\Big) \ra \Der\big( C^*_\TW(\hat{\bS}_{\cO_X} N^\chk)\big)\]
defined in \emph{loc. cit.}. Hence the structure maps $a_k$ and $l_k$, being the component maps of $Q$, also lies in the image of $\alpha$. If we denote by $\theta_{0,k}$ and $\theta_{1,k}$ the components of $\theta$ such that
\begin{align*}
\theta^{0,k}&\in C^*_\TW\Big(\Der\big( \cO_X, (N^\chk)^k\big)\Big), \mbox{\;\; and \;} \\
\theta^{1,k}& \in C^*_\TW\Big(\Hom_{k_X}\big(N^\chk, (N^\chk)^k\big)\Big),
\end{align*}
then we have
\[ a_k=\alpha(\theta^{0,k}), \mbox{\;\;\; and \;\;} l_k=\alpha(\theta^{1,k}).\]

\begin{lem}
\label{lem:obs1}
Let $s_k: \cO_X \ra \cO_X^{(k)}$ be a splitting, and let $\Phi$ be a $1$-cocycle compatible with $s_k$. Let $a_i=\alpha(\theta^{0,i})$ be the associated structure maps in the $L_\infty$ algebroid structure on $N[-1]$. Then we have $\theta^{0,i}=0$ and $a_i=0$ for $0\leq i\leq k$. Moreover both $a_{k+1}$ and $\theta^{0,k+1}$ are cocycles.
\end{lem}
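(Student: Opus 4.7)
The plan is to read both assertions off the Maurer-Cartan dictionary of Section~\ref{sec:deform}, using the explicit form of the compatibility condition with $s_k$. The first half is naturality along a Lie subalgebra inclusion; the second is the projection of the Maurer-Cartan equation onto its lowest nontrivial type.

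My first step would be to identify the relevant Lie subalgebra. The compatibility of $\Phi$ with $s_k$ forces each $T_{i_0 i_1}$ to lie in
$$\gog^+_{[k]} := \bigl\{\theta \in \Der^+\bigl(\widehat{\bS}_{\cO_X}(N^\chk)\bigr) \st \theta^{0, i} = 0 \text{ for } 0 \leq i \leq k \bigr\}\,.$$
I would then verify, as is already flagged in the text preceding the lemma, that $\gog^+_{[k]}$ is a Lie subalgebra: for $\theta_1, \theta_2 \in \gog^+_{[k]}$ and $f \in \cO_X$, the element $\theta_2(f)$ sits in $\bigoplus_{j \geq k+1} \bS^j N^\chk$, and applying $\theta_1$ as a filtered derivation uses only $\theta_1^{1, m}$ with $m \geq 2$ (by minimality $l_0 = l_1 = 0$) together with $\theta_1^{0, m}$ for $m \geq k+1$, so the output lands in total polynomial degree $\geq k+2$. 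Next I would invoke the naturality of Theorem~\ref{thm:mc} for the inclusion $\gog^+_{[k]} \hookrightarrow \Der^+$ to obtain a Maurer-Cartan representative of $\Phi$ inside $C^*_\TW(\gog^+_{[k]})$. This yields $\theta^{0, i} = 0$ and therefore $a_i = \alpha(\theta^{0, i}) = 0$ for $0 \leq i \leq k$.

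For the second half, I would project the Maurer-Cartan equation $d_\TW \theta + \tfrac{1}{2}[\theta,\theta] = 0$ onto the $(0, k+1)$-component, that is, the piece in $\Hom(\cO_X, \bS^{k+1} N^\chk)$. The Thom-Whitney differential respects the type decomposition, so the first term contributes precisely $d_\TW \theta^{0, k+1}$. The bracket term vanishes by the same counting used in the subalgebra check: $\theta(f)$ already lives in $\bS^{\geq k+1} N^\chk$, and a second application of $\theta$ can only push further into $\bS^{\geq k+2}$. Thus $d_\TW \theta^{0, k+1} = 0$, and since $\alpha$ is a morphism of dg-Lie algebras (Lemma~\ref{lem:monoidal}), applying $\alpha$ gives $d_\TW a_{k+1} = 0$ as well.

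The step I would watch most carefully is the naturality claim for Theorem~\ref{thm:mc}: one needs that a non-abelian $1$-cocycle with values in $\exp(\gog^+_{[k]})^\bullet$ admits a gauge-equivalent Maurer-Cartan representative \emph{inside} $C^*_\TW(\gog^+_{[k]})$, not merely in $C^*_\TW(\Der^+)$. This should be immediate from the explicit holonomy formula for the map $e$ in the proof of Theorem~\ref{thm:mc}, combined with the fact that the gauge equivalences implementing the change of representative can also be taken in the subalgebra; but it is the point where the argument would be most easily glossed over, so I would record it explicitly before concluding.
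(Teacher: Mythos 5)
Your proposal is correct and follows essentially the same route as the paper: the vanishing of $\theta^{0,i}$ and $a_i$ for $i\leq k$ comes from the cocycle taking values in the Lie subalgebra singled out by the compatibility with $s_k$, and the cocycle property of $\theta^{0,k+1}$ comes from projecting the Maurer-Cartan equation onto the $(0,k+1)$-component, where the quadratic term $\sum_{i+j=k+2}\theta^{1,j}\circ\theta^{0,i}$ dies because minimality forces $j\geq 2$, hence $i\leq k$. The paper dismisses the first half as ``clear from the definition''; your explicit check that the subalgebra is closed under bracket and that the Maurer-Cartan representative can be taken inside $C^*_\TW$ of that subalgebra is exactly the point being glossed over there, and your degree-counting for the bracket term is equivalent to the paper's index bookkeeping.
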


\begin{proof} 
The vanishing of $\theta^{0,i}$ and $a_i$ is clear from the definition. Now let us show that $\theta^{0,k+1}$ is a cocycle. Indeed since $\theta$ satisfies the Maurer-Cartan equation, we have
\[ [d_\TW, \theta^{0,k+1}]+\sum_{i+j=k+2} \theta^{1,j}\circ \theta^{0,i}=0.\]
Due to minimality condition the sum above is over $j\geq 2$, which in particular implies that $i\leq k$. Hence the sum $\sum_{i+j=k+2} \theta^{1,j}\circ \theta^{0,i}$ vanishes, and so we get
\[ [d_\TW,\theta^{0,k+1}]=0.\]
This proves that $\theta^{0,k+1}$ is a cocycle. Since $a_{k+1}$ is the image of $\theta^{0,k+1}$ under the cochain map $\alpha$, it is also a cocycle.
\end{proof}

Thus the element $\theta^{0,k+1}$ is a degree one cocycle in the complex $C^*_\TW\Big(\Der\big( \cO_X, (N^\chk)^k\big)\Big)$. Its class $[\theta^{0,k+1}]$ is then a class in $\Ext^1(\bS^{k+1}N, T_X)$. Similarly the class $[a_{k+1}]$ is in the first cohomology group $H^1\big(\Hom(\cC^*_\TW(\bS^{k+1}_{\cO_X} N), T_{\cC^*_\TW(\cO_X)})\big)$. It is plausible the two cohomology groups are in fact isomorphic, but the authors do not know a proof of this claim.

\begin{prop}
\label{prop:obs1}
Assume that we are in the same setup as Lemma~\ref{lem:obs1}, then the following are equivalent:
\begin{itemize}
\item[$(A)$] there exists a splitting $s_{k+1}: \cO_X\ra \cO_X^{(k+1)}$ lifting $s_k$;
\item[$(B)$] the cohomology class $[\theta^{0,k+1}]\in \Ext^1(\bS^{k+1}N, T_X)$ vanishes;
\item[$(C)$] the cohomology class $[a_{k+1}] \in H^1\big(\Hom(\cC^*_\TW(\bS^{k+1}_{\cO_X} N), T_{\cC^*_\TW(\cO_X)})\big)$ vanishes.
\end{itemize}
\end{prop}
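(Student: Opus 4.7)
I would establish the core equivalence $(A)\Leftrightarrow(B)$ via the gauge-theoretic correspondence of Corollary~\ref{cor:mc}, and then derive $(B)\Leftrightarrow(C)$ from the fact that $a_{k+1}=\alpha(\theta^{0,k+1})$ with $\alpha$ a chain map that can be promoted to a quasi-isomorphism on the relevant complexes.

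For $(A)\Rightarrow(B)$: given a lift $s_{k+1}$ of $s_k$, apply (the proof of) Lemma~\ref{local} locally to extend $s_{k+1}$ to formal splittings on each affine patch $U_{i_0}$, and use them to refine each initial trivialization $\Phi_{i_0}$ to a trivialization $\Phi'_{i_0}$ compatible with $s_{k+1}$. The resulting $1$-cocycle $\Phi'$ still describes the sheaf $\cO_X^{(\infty)}$, hence by Corollary~\ref{cor:mc} is gauge-equivalent to $\Phi$; the associated Maurer-Cartan element $\theta'$ satisfies $(\theta')^{0,i}=0$ for all $i\leq k+1$ by construction, so $[\theta^{0,k+1}]=[(\theta')^{0,k+1}]=0$.

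For $(B)\Rightarrow(A)$: choose $\eta\in C^0_\TW\big(\Der(\cO_X,\bS^{k+1}N^\chk)\big)$ with $d_\TW\eta=\theta^{0,k+1}$, extended to a derivation of $\hat{\bS}_{\cO_X}N^\chk$ by setting $\eta|_{N^\chk}=0$. The gauge transform $\theta':=e^{-\eta}*\theta$ satisfies $(\theta')^{0,i}=0$ for all $i\leq k+1$; consequently the refined $1$-cocycle $\Phi'=\exp(-\eta)\cdot\Phi$ fixes the canonical inclusion $\cO_X\hookrightarrow\hat{\bS}_{\cO_X}N^\chk$ modulo $\bS^{\geq k+2}$, so the local splittings induced by the $\Phi'_{i_0}$ glue to a global splitting $s_{k+1}$ lifting $s_k$. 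Finally, $(B)\Leftrightarrow(C)$ follows from $a_{k+1}=\alpha(\theta^{0,k+1})$ combined with $\alpha$ inducing an isomorphism $\Ext^1(\bS^{k+1}N,T_X)\cong H^1(\Hom(\cC^*_\TW(\bS^{k+1}_{\cO_X}N),T_{\cC^*_\TW(\cO_X)}))$ on the relevant cohomology, which may be obtained via the \v{C}ech-to-$\Ext$ spectral sequence applied to the Thom-Whitney resolution.

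\textbf{Main obstacle.} The key technical point lies in the $(B)\Rightarrow(A)$ step: one must verify that gauging by $\eta$ kills $\theta^{0,k+1}$ without reintroducing non-vanishing components at filtration degrees $\leq k$. This is a filtration-degree bookkeeping; since $\eta$ takes values in $\bS^{k+1}N^\chk$ and vanishes on $N^\chk$, every bracket $[\eta,-]$ raises the filtration by at least $k+1$, so in the Baker-Campbell-Hausdorff expansion of $e^{-\eta}*\theta$ only the linear term $-d_\TW\eta$ contributes to the $\bS^{k+1}$-component (yielding $\theta^{0,k+1}-d_\TW\eta=0$), while all iterated brackets land in $\bS^{\geq k+2}$ by minimality of $\theta$.
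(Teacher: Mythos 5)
Your implications $(A)\Rightarrow(B)$ and $(B)\Rightarrow(A)$ are essentially sound. The first coincides with the paper's argument: re-choose the cocycle compatibly with $s_{k+1}$ and read off the $(0,k+1)$-component of the gauge equation $e^{\eta}*\theta'=\theta$, which gives $[d_\TW,\eta^{0,k+1}]=\theta^{0,k+1}$. For the second you work backwards at the cocycle level, gauging $\theta$ by $\eta$ and gluing the resulting local splittings, whereas the paper instead proves $(C)\Rightarrow(A)$ by using the homotopy $h$ to build a dg-algebra morphism $\cF^{(0)}\to\cF^{(k+1)}$, $f\mapsto f+h(f)$, and passing to cohomology. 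Your filtration bookkeeping explaining why gauging by $\eta$ kills $\theta^{0,k+1}$ without disturbing the components in degrees $\leq k$ is correct. (One point of precision: $\eta$ lives in the degree-zero part of the Thom--Whitney complex, not among the non-abelian \v{C}ech $0$-cochains, so ``$\Phi'=\exp(-\eta)\cdot\Phi$'' only makes sense after translating back through the equivalence of groupoids of Corollary~\ref{cor:mc}; harmless, but it should be said.)

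The genuine gap is in $(B)\Leftrightarrow(C)$. You assert that $\alpha$ induces an isomorphism $\Ext^1(\bS^{k+1}N,T_X)\cong H^1\big(\Hom(\cC^*_\TW(\bS^{k+1}_{\cO_X}N),T_{\cC^*_\TW(\cO_X)})\big)$, to be ``obtained via the \v{C}ech-to-$\Ext$ spectral sequence''. This is precisely the statement the authors single out immediately before the proposition as one they do not know how to prove: the target complex involves derivations of the Thom--Whitney resolution rather than the Thom--Whitney resolution of the sheaf of derivations, and it is not clear that $\alpha$ is a quasi-isomorphism between the two. Only the easy direction $(B)\Rightarrow(C)$ follows from $\alpha$ being a chain map (exact goes to exact since $a_{k+1}=\alpha(\theta^{0,k+1})$). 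Without the isomorphism there is no way back from $(C)$: your argument establishes $(A)\Leftrightarrow(B)$ and $(B)\Rightarrow(C)$, leaving $(C)$ dangling. The paper closes the loop by proving $(C)\Rightarrow(A)$ directly --- from $a_{k+1}=[d_\TW,h]$ it constructs the lift of $s_k$ as the map induced on cohomology by $f\mapsto f+h(f)$, checked to be a dg-algebra map $\cF^{(0)}\to\cF^{(k+1)}$ reducing to the identity modulo $\cF^{(k)}$ --- and this is the step your proposal is missing. You should either supply that construction or actually prove the claimed isomorphism of cohomology groups (which would go beyond what the paper establishes).
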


\begin{proof}
$\bullet~(A)\Rightarrow(B)$: Assuming $(A)$, we can choose another 1-cocycle $\Phi'$ to be compatible with $s_{k+1}$, which in particular 
implies that it is compatible with $s_k$. Denote by $\theta'$ the corresponding Maurer-Cartan element. Since different choices of $\Phi$ give 
rise to gauge equivalent Maurer-Cartan element by Corollary~\ref{cor:mc}, there exist a degree zero element $\eta$ in the differential graded 
Lie algebra $\cC^*_\TW(\gog)$ such that
\[ e^\eta*\theta' = \theta\]
where $\gog$ is the Lie subalgebra of $\Der^+(\hat{\bS}_{\cO_X} N^\chk)$ whose $(0,i)$ components vanishes for $0\leq i\leq k$. 
Writing out the above equation in the $(0,k+1)$-component implies that 
\[ [d_\TW, \eta^{0,k+1}] = \theta^{(0,k+1)}.\]
This proves $(B)$.

$\bullet~(B)\Rightarrow(C)$: It suffices to note that on the chain level we have $a_{k+1}=\alpha(\theta^{(0,k+1)})$.

$\bullet~(C)\Rightarrow(A)$: Assume that $a_{k+1}=[d_\TW, h]$ for some degree zero 
$h\in \Hom(\cC^*_\TW(\bS^{k+1}_{\cO_X} N), T_{\cC^*_\TW(\cO_X)})$. We define a morphism
$$
\tilde{s}_{k+1}: \cF^{(0)} \ra \cF^{(k+1)}
$$
by $s_{k+1}(f):= f+ h(f)$. Recall that $\cF^{(k)}$ is the $k$-truncation of 
$\cF:=\Big(\hat{\bS}_{\cC^*_\TW(\cO_X)}\big(\cC^*_{\TW}(N^\chk)\big), d_\TW+Q\Big)$. 
One checks that $\tilde{s}_{k+1}$ is a morphism of differential graded algebras. Moreover the composition
$$\cF^{(0)} \stackrel{\tilde{s}_{k+1}}{\ra} \cF^{(k+1)} \twoheadrightarrow \cF^{(k)}
$$
is simply $f\mapsto f$. Thus the induced map on cohomology gives a map of algebras
$$
s_{k+1}: \cO_X \ra \cO^{(k+1)}_X
$$
that lifts the splitting map $s_k$. 
\end{proof}

\begin{cor}
Let $i:X\ra Y$ be an embedding of smooth algebraic varieties. Assume that there exists a splitting $s_2$ of the natural morphism $X\hookrightarrow X^{(2)}$. Then $N[-1]$ is a Lie algebra object in the symmetric monoidal category $\D(X)$.
\end{cor}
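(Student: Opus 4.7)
The plan is to produce a minimal $L_\infty$-algebroid structure on $N[-1]$ for which the structure maps $a_1$ and $a_2$ vanish on the nose, and then to invoke Lemma~\ref{lem:dlie}.

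First I would observe that the given splitting $s_2$ of $X\hookrightarrow X_Y^{(2)}$ automatically yields splittings $s_1$ and $s_0$ of the lower-order neighborhoods: on rings, $s_1:\cO_X\to\cO_X^{(1)}$ is the composition of $s_2:\cO_X\to\cO_X^{(2)}$ with the quotient $\cO_X^{(2)}\twoheadrightarrow\cO_X^{(1)}$, and $s_0=\id$. Next, I would construct, on a suitable affine open cover $\mathcal U=\{U_{i_0}\}$ of $X$, local trivializations
$$
\Phi_{i_0}:\cO_X^{(\infty)}\big|_{U_{i_0}}\longrightarrow\widehat{\bS}_{\cO_X}(N^\chk)\big|_{U_{i_0}}
$$
that are compatible with $s_2$ in the sense of Section 5.3.1. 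This is achieved by rerunning the inductive construction in the proof of Lemma~\ref{local}: rather than starting the inductive lifting of splittings from $s_0=\id$, one starts from the restriction of $s_2$ to $U_{i_0}$, and extends order by order using formal smoothness. The resulting collection $\{\Phi_{i_0 i_1}:=\Phi_{i_1}\circ\Phi_{i_0}^{-1}\}$ is then a non-abelian $1$-cocycle in $\Aut^+\!\left(\widehat{\bS}_{\cO_X}(N^\chk)\right)$ compatible with $s_2$ (and hence also with $s_1$).

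Plugging this cocycle into Proposition~\ref{coro:main} yields a minimal $L_\infty$-algebroid structure on $N[-1]$ whose Chevalley--Eilenberg algebra computes $\cO_X^{(\infty)}$. By Lemma~\ref{lem:obs1} applied with $k=2$, the associated Maurer--Cartan element $\theta$ satisfies $\theta^{0,i}=0$ for $0\leq i\leq 2$, and consequently $a_0=a_1=a_2=0$ strictly. Combined with the minimality conditions $l_0=l_1=0$, this places us precisely in the setting of Lemma~\ref{lem:dlie}, which then identifies $l_2$ as an $\cC^*_{\TW}(\cO_X)$-linear bracket inducing a morphism $\wedge^2(N[-1])\to N[-1]$ in $\D(X)$, and shows that $l_3$ is likewise $\cC^*_{\TW}(\cO_X)$-linear and witnesses the Jacobi identity.

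The only delicate point is verifying that the inductive extension in the adapted Lemma~\ref{local} can indeed be arranged to start from the globally given $s_2$ while still producing cocycles $\Phi_{i_0 i_1}$ whose associated graded is trivial in degrees $\leq 2$; however, this follows immediately from the fact that formal smoothness allows one to lift $s_2$ locally to all orders without obstruction, and that any two such liftings differ by a derivation vanishing in degrees $\leq 2$. With this in hand, the corollary follows by the chain of lemmas above.
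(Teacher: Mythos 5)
Your proof is correct and follows essentially the same route as the paper: choose a $1$-cocycle $\Phi$ compatible with $s_2$ so that $a_1=a_2=0$ on the nose, then apply Lemma~\ref{lem:dlie}. If anything, your invocation of Lemma~\ref{lem:obs1} (which gives strict vanishing of $a_1,a_2$) is slightly more precise than the paper's citation of Proposition~\ref{prop:obs1} (which only concerns the cohomology classes), and your remark on adapting the inductive lifting in Lemma~\ref{local} to start from $s_2$ fills in a detail the paper leaves implicit.
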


\begin{proof}
This is a consequence of Proposition~\ref{prop:obs1} and Lemma~\ref{lem:dlie}.
\end{proof}
\begin{rem}
The Lie structure on $N[-1]$ depends on the choice of $s_2$.
\end{rem}

\subsubsection{When is $X^{(k)}_N$ isomorphic to $X^{(k)}_Y$}

Let us assume that there is an isomorphism $t_{k-1}: X^{(k-1)}_N\cong X^{(k-1)}_Y$. We would like to understand when $t_{k-1}$ lifts to an isomorphism between $X^{(k)}_N$ and $X^{(k)}_Y$.

First note that the isomorphism $t_{k-1}$ induces a splitting $s_{k-1}: X^{(k-1)}_Y \ra X$. Hence we can use Proposition~\ref{prop:obs1} to analyze the lifting of $s_{k-1}$ to a splitting $s_k$, which should necessarily exist if $t_{k-1}$ lifts. Thus in the following we assume that there is a splitting $s_k: X^{(k)}_Y \ra X$ compatible with $t_{k-1}$ in the sense that the induced splitting from $t_{k-1}$ agrees with that of $s_k$.

Given a compatible pair $(t_{k-1}, s_k)$ as above, we can require the isomorphisms $\Phi_{i_0}$ to be compatible with $(t_{k-1}, s_k)$ in the sense that it is compatible with $s_k$, and that there is a commutative diagram.
\[ \begin{CD}
\cO_X^{(k-1)} \mid_{U_{i_0}} @> \Phi^{(k-1)}_{i_0} >> \bS^{\leq k-1}_{\cO_X} N^\chk \mid_{U_{i_0}} \\
@A t_{k-1} AA    @|\\
\bS^{\leq k-1}_{\cO_X} N^\chk \mid_{U_{i_0}}  @= \bS^{\leq k-1}_{\cO_X} N^\chk \mid_{U_{i_0}} 
\end{CD}\]
The corresponding $1$-cocycle $\Phi_{i_0i_1}$ is then called compatible with $(t_{k-1}, s_k)$.

\begin{lem}
\label{lem:obs2}
Let $(t_{k-1}, s_k)$ be a compatible pair, and let $\Phi$ be a compatible $1$-cocycle as described above. Then we have $a_i=0$ for all $0\leq i\leq k$, and $l_i=0$ for all $0\leq i\leq k-1$. Moreover the morphism $l_k$ is a cocycle.
\end{lem}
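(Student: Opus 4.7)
My plan is to derive all three claims from the Maurer-Cartan equation satisfied by the element $\theta\in C^*_{\TW}(\gog)$ that corresponds, via Corollary~\ref{cor:mc}, to the compatible $1$-cocycle $\Phi$; here $\gog$ denotes the relevant pronilpotent Lie subalgebra of $\Der^+(\hat{\bS}_{\cO_X} N^\chk)$. First I would unpack what the compatibility of $\Phi$ with the pair $(t_{k-1},s_k)$ means at the infinitesimal level. Compatibility with $s_k$ is exactly the hypothesis of Lemma~\ref{lem:obs1}, so it forces $\theta^{0,i}=0$ for $0\le i\le k$. Compatibility with $t_{k-1}$ says that $\Phi^{(k-1)}_{i_0i_1}=\mathrm{id}$ on $\bS^{\le k-1}_{\cO_X}(N^\chk)$; since $T_{i_0i_1}=\log\Phi_{i_0i_1}$ strictly increases the symmetric degree, this translates into $T^{1,j}_{i_0i_1}=0$ for $0\le j\le k-1$, hence $\theta^{1,j}=0$ in the same range. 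Applying the cochain map $\alpha$ of Lemma~\ref{lem:monoidal} immediately yields $a_i=0$ for $0\le i\le k$ and $l_j=0$ for $0\le j\le k-1$.

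To prove $l_k$ is a cocycle, I would project the Maurer-Cartan equation $d_{\TW}\theta+\tfrac12[\theta,\theta]=0$ onto the component $C^*_{\TW}\bigl(\Hom(N^\chk,(N^\chk)^k)\bigr)$, obtaining
\[
[d_{\TW},\theta^{1,k}] + \tfrac12\,[\theta,\theta]^{1,k}\,=\,0.
\]
It then suffices to check that $[\theta,\theta]^{1,k}=0$. Writing $[\theta,\theta]=2\,\theta\circ\theta$ as a composition of derivations of $\hat{\bS}_{\cO_X}(N^\chk)$ and tracking the symmetric degree: for $v\in N^\chk$ one has $\theta(v)\in\bS^{\ge k}(N^\chk)$ by the vanishing of $\theta^{1,j}$ for $j\le k-1$. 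Applying $\theta$ a second time as a derivation, both on the $\cO_X$-coefficients (via $\theta^{0,i}$, whose image lies in $\bS^{\ge k+1}$) and on each $N^\chk$-factor (via $\theta^{1,j}$, with image in $\bS^{\ge k}$), the output sits in $\bS^{\ge 2k-1}$. For $k\ge 2$ this strictly exceeds $k$, so the $(1,k)$-component vanishes; the cases $k=0,1$ are vacuous by minimality. Therefore $\theta^{1,k}$ is a $d_{\TW}$-cocycle, and since $\alpha$ is a cochain map, so is $l_k=\alpha(\theta^{1,k})$.

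The step I expect to be the main obstacle is the bracket-vanishing bookkeeping. A derivation of $\hat{\bS}_{\cO_X}(N^\chk)$ acts on \emph{both} $\cO_X$-scalars and $N^\chk$-generators, so a priori several types of terms could contribute to $[\theta,\theta]^{1,k}$, and one must verify that every one of them is excluded by the combination of $s_k$-compatibility, $t_{k-1}$-compatibility, and minimality. Once the degree bounds above are organized, the argument reduces to a single inequality $2k-1>k$, but it is the step most prone to index errors, and it is precisely what pins down $k$ (and not some larger integer) as the first index where the cocycle condition for $l_k$ genuinely emerges.
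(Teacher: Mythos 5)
Your proposal is correct and follows the same route as the paper, which simply notes that the vanishing of $a_i$ and $l_i$ is immediate from the compatibility conditions and that $[d_{\TW},l_k]=0$ follows from the Maurer--Cartan equation once the lower-order components vanish. Your explicit degree bookkeeping (the bound $\bS^{\ge 2k-1}$ and the inequality $2k-1>k$ for $k\ge 2$, with $k\le 1$ handled by minimality) is exactly the detail the paper leaves implicit in its reference to the proof of Lemma~\ref{lem:obs1}.
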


\begin{proof}
The proof is similar to that of Lemma~\ref{lem:obs1}. Indeed the vanishing of $a_i$ and $l_i$ follow from definition while the identity $[d_\TW,l_k]=0$ follows from the Maurer-Cartan equation $(d_\TW+Q)=0$ and the vanishing of lower degree $a_i$'s and $l_i$'s.
\end{proof}

The morphism $l_k$ is a degree one cocycle in the complex $\Hom_{C^*_\TW(\cO_X)}\big(C^*_\TW(N^\chk), C^*_\TW(\bS^k_{\cO_X}N^\chk)\big)$. Since the functor $\TW$ is monoidal this complex computes $\Ext^*_X(N^k, N)$.

\begin{prop}
\label{prop:obs2}
Let the setup be the same as in Lemma~\ref{lem:obs2}. Then the following are equivalent:
\begin{itemize}
\item[$(A)$] the class $[l_k]\in \Ext^1_X(N^k,N)$ vanishes;
\item[$(B)$] the isomorphism $t_{k-1}$ lifts to an isomorphism $t_k: \bS^{\leq k}_{\cO_X} N^\chk \ra \cO^{(k)}_X$.
\end{itemize}
\end{prop}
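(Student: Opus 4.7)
The plan is to imitate the pattern of Proposition~\ref{prop:obs1}, using the compatibility hypotheses from Lemma~\ref{lem:obs2} (namely that $a_i = 0$ for $0 \le i \le k$ and $l_i = 0$ for $0 \le i \le k-1$, so the leading obstruction sits in the $(1,k)$-component $\theta^{1,k}$ whose image under $\alpha$ is $l_k$).

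For the implication $(B) \Rightarrow (A)$, I would start from a lift $t_k$ and, after possibly refining the covering $\mathcal U$, choose local trivializations $\Phi_{i_0}^{(k)}$ compatible with the pair $(t_k, s_k)$. Denote by $\Phi'$ the resulting $1$-cocycle and by $\theta'$ the corresponding Maurer-Cartan element in $\cC^*_\TW(\gog)$, where $\gog$ is now the Lie subalgebra of $\Der^+(\hat{\bS}_{\cO_X} N^\chk)$ whose $(0,i)$- and $(1,i)$-components vanish for all $0 \le i \le k$. By construction $\theta'^{1,k} = 0$. Since two compatible cocycles yield gauge equivalent Maurer-Cartan elements by Corollary~\ref{cor:mc}, there exists a degree zero element $\eta$ in the broader Lie subalgebra (vanishing only in the relevant lower components) with $e^\eta * \theta' = \theta$. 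Extracting the $(1,k)$-component of this gauge relation and using minimality together with $\theta^{1,i} = 0$ for $i < k$, all bracket terms drop out and I get $[d_\TW,\eta^{1,k}] = \theta^{1,k}$. Applying the cochain map $\alpha$ then yields $l_k = [d_\TW, \alpha(\eta^{1,k})]$, so $[l_k] = 0$ in $\Ext^1_X(N^k, N)$.

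For $(A) \Rightarrow (B)$, given $l_k = [d_\TW, h]$ with $h \in \Hom^0_{\cC^*_\TW(\cO_X)}\!\big(\cC^*_\TW(N^\chk), \cC^*_\TW(\bS^k_{\cO_X} N^\chk)\big)$, I would define a morphism
$$
\tilde{t}_k : \bS^{\leq k}_{\cO_X} N^\chk \longrightarrow \cF^{(k)}
$$
on generators by sending $f \in \cO_X$ to the value prescribed by $s_k$ and sending $n \in N^\chk$ to a chosen lift of $t_{k-1}(n) \in \cF^{(k-1)}$ along $\cF^{(k)} \twoheadrightarrow \cF^{(k-1)}$, then correcting by $-h(n)$. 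Extending multiplicatively determines a morphism of graded algebras. To check that it is a cochain map, I would apply $d_\TW + Q$ to the definition on generators: on $\cO_X$-generators the identity reduces to the chain-map property of $t_{k-1}$ (using $a_i=0$ for $i\le k$), and on $N^\chk$-generators the leading obstruction is precisely $l_k$ applied to $n$, which is killed by the correction term $-h(n)$ since $[d_\TW, h] = l_k$. Passing to cohomology, Theorem~\ref{thm:taut} identifies $H^*(\cF^{(k)})$ with $\cO^{(k)}_X$, so $\tilde{t}_k$ induces $t_k : \bS^{\leq k}_{\cO_X} N^\chk \to \cO^{(k)}_X$. By construction $t_k$ projects to $t_{k-1}$ modulo the $k$-th power of the augmentation ideal; in particular the associated graded of $t_k$ agrees with that of $t_{k-1}$ in degrees $\le k-1$ and with the identity on $\bS^k N^\chk$ in degree $k$, hence $t_k$ is a filtered isomorphism.

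The main obstacle I expect is the verification that applying $d_\TW + Q$ to the lift of $t_{k-1}(n)$ produces exactly $l_k(n)$ modulo $(\cF^+)^{k+1}$. This requires carefully using that all lower structure maps $a_i$ and $l_i$ vanish by Lemma~\ref{lem:obs2}, so that when one expands $(d_\TW + Q)\tilde{t}_k(n)$ modulo $(\cF^+)^{k+1}$, only the $l_k$-term survives. Once this computation is pinned down, the rest of the argument is formal.
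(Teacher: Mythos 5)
Your proposal is correct and follows essentially the same route as the paper: for $(A)\Rightarrow(B)$ the paper likewise builds the lift as the algebra map $\id+h$ on the Thom--Whitney level (your ``lift of $t_{k-1}(n)$ corrected by $h$'' is the same map, since under the compatibility assumption $t_{k-1}$ is realized by the identity modulo $\bS^k$) and passes to cohomology via Theorem~\ref{thm:taut}; for $(B)\Rightarrow(A)$ the paper also chooses a cocycle $\Phi'$ compatible with the lift and invokes gauge equivalence from Corollary~\ref{cor:mc}, exactly as in Proposition~\ref{prop:obs1}. Your write-up merely spells out the $(1,k)$-component extraction that the paper leaves implicit.
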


\begin{proof}
$\bullet~(A)\Rightarrow(B)$: Assume that $l_k=[d_\TW, h]$ for some degree zero morphism $h$ in the morphism complex 
$\Hom_{C^*_\TW(\cO_X)}\big(C^*_\TW(N^\chk), C^*_\TW(\bS^k_{\cO_X}N^\chk)\big)$. We can then define a morphism of algebras 
$\tilde{t}_k: C^*_\TW(\bS^k_{\cO_X}N^\chk) \ra \cF^{(k)}$ by formula $\id+h$. Notice that this is a morphism of algebras since $h$ has image in 
$\bS^k$. That it also commutes with differential is a computation using Lemma~\ref{lem:obs2} and the identity $[d_\TW,h]=l_k$. 
Thus the induced map on cohomology defines a required lifting. Note that it lifts $t_{k-1}$ since $\id+h$ modulo $\bS^k$ is just $\id$.

$\bullet~(B)\Rightarrow(A)$: Assuming that there exists such a lifting, then we can choose a different cocycle $\Phi'$ such that the 
corresponding structure maps satisfy $a_i=0$ and $l_i=0$ for all $0\leq i\leq k$. Same as in the proof of Proposition~\ref{prop:obs1}, 
we can use the fact that $\Phi$ and $\Phi'$ are gauge equivalent to show that $l_k$ is exact.
\end{proof}

\begin{rem}
Similar obstruction classes as in Proposition~\ref{prop:obs1} and Proposition~\ref{prop:obs2} were discussed in~\cite[Proposition 2.2]{ABT} and~\cite[Corollary 3.4 \& Corollary 3.6]{ABT} using complex analytic methods. The existence of such classes can be formulated using the language of gerbes and stacks, see for example~\cite[Section 4]{Kap}. The observation that all these obstruction classes come from an $L_\infty$-algebroid structure seems to be new.
\end{rem}

\end{document}